\documentclass[11pt,reqno]{amsart}
\usepackage{fullpage}
\usepackage{amsfonts}
\usepackage{amssymb}
\usepackage{times}
\usepackage{graphicx}
\usepackage{tikz-cd}
\usepackage{appendix}
\usepackage{color}
\usepackage{mathrsfs}
\vfuzz=2pt
\usepackage{geometry}
\geometry{left=3.2cm,right=3.2cm,top=3.2cm,bottom=3.2cm}
\newtheorem{thm}{Theorem}[section]
\newtheorem{cor}[thm]{Corollary}
\newtheorem{lem}[thm]{Lemma}
\newtheorem{prop}[thm]{Proposition}

\theoremstyle{definition}
\newtheorem{ex}[thm]{Example}
\newtheorem{sit}[thm]{Situation}
\newtheorem{defn}[thm]{Definition}

\theoremstyle{remark}
\newtheorem{rem}[thm]{Remark}
\begin{document}

\title[Short version of title]{A Generalization of Lifting Non-proper Tropical Intersections}
\author{Xiang He}
\maketitle

\begin{abstract}
Let $X$ and $X'$ be closed subschemes of an algebraic torus $T$ over a non-archimedean field. We prove the rational equivalence as tropical cycles in the sense of \cite[\S 2]{meyer2011intersection} between the tropicalization of the intersection product $ X\cdot X'$ and the stable intersection $\mathrm{trop}(X)\cdot\mathrm{trop}(X')$, when restricted to (the inverse image under the tropicalization map of) a connected component $C$ of $\mathrm{trop}(X)\cap\mathrm{trop}(X')$. This requires possibly passing to a (partial) compactification of $T$ with respect to a suitable fan. We define the compactified stable intersection in a toric tropical variety, and check that this definition is compatible with the intersection product in loc.cit.. As a result we get a numerical equivalence between $\overline X\cdot\overline X'|_{\overline C}$ and $\overline{\mathrm{trop}(X)\cdot\mathrm{trop}(X')|_C}$ via the compactified stable intersection, where the closures are taken inside the compactifications of $T$ and $\mathbb R^n$. In particular, when $X$ and $X'$ have complementary codimensions, this equivalence generalizes \cite[Theorem 6.4]{osserman2011lifting}, in the sense that $X\cap X'$ is allowed to be of positive dimension. Moreover, if $\overline X\cap\overline X'$ has finitely many points which tropicalize to $\overline C$, we prove a similar equation as in \cite[Theorem 6.4]{osserman2011lifting} when the ambient space is a reduced subscheme of $T$ (instead of $T$ itself).

\smallskip
\noindent \textbf{Keywords.} Stable intersections, compactification, refined Gysin homomorphism.
\end{abstract}


\section{Introduction}
Let $K$ be an algebraically closed field with a valuation $\mathrm{val}\colon K\rightarrow \mathbb R$.
Let $T$ be an algebraic torus of dimension $n$ over $K$. There is a tropicalization map $\mathrm{trop}\colon T\rightarrow \mathbb R^n$ defined by taking the valuation of every coordinate. Under this map, the image of a pure dimensional subscheme $X$ of $T$ is a balanced polyhedral complex of the same dimension, which is denoted by $\mathrm{trop}(X)$. It is natural to consider under what conditions does the intersection commute with tropicalization, namely, given subschemes $X,X'\subseteq T$ when do we have $\mathrm{trop}(X\cap X)=\mathrm{trop}(X)\cap\mathrm{trop}(X')$. This reduces to a lifting problem since we always have $\mathrm{trop}(X\cap X')\subseteq \mathrm{trop}(X)\cap\mathrm{trop}(X')$. Results in this direction have been applied in \cite{chan2015theta}, studying the connection between the theta characteristics of a $K_4$-curve and the theta characteristics of its minimal skeleton; and in \cite{cartwright2012connectivity}, showing that the tropicalization of an irreducible subvariety of an algebraic torus is connected through codimension one; and in \cite{cartwright2014lifting}, discussing the lifting of divisors on a chain of loops as the skeleton of a smooth projective curve, etc. 

When $\mathrm{trop}(X)$ intersects $\mathrm{trop}(X')$ properly this problem is studied thoroughly by Osserman and Payne in \cite{osserman2013lifting}. They proved that $\mathrm{trop}(X\cap X')=\mathrm{trop}(X)\cap\mathrm{trop}(X')$, which generalizes a well-known result \cite[Lemma 3.2]{bogart2007computing} concerning the lifting when $\mathrm{trop}(X)$ and $\mathrm{trop}(X')$ intersect transversely. Moreover, they gave a lifting formula for the intersection multiplicity of $\mathrm{trop}(X)\cdot\mathrm{trop}(X')$ along a maximal face of $\mathrm{trop}(X)\cap\mathrm{trop}(X')$, where the ambient space is a closed subscheme of $T$ (instead of $T$ itself). See \cite[\S 5]{osserman2013lifting} for details.

The commutativity does not hold when $\mathrm{trop}(X)\cap\mathrm{trop}(X')$ is nonproper. For example one can take hyperplanes $X=\{x=1\}$ and $X'=\{x=1+a\}$ where $\mathrm{val}(a)>0$, then $X$ and $X'$ have empty intersection and same tropicalizations. However, one can still ask about the connections between the intersection cycles $X\cdot X'$ and $\mathrm{trop}(X)\cdot \mathrm{trop}(X')$. As an example, assume $K$ is nonarchimedean, Morrison \cite{morrison2015tropical} proved that when $X$ and $X'$ are plane curves that intersect properly, the tropicalization of the intersection cycle $\mathrm{trop}(X\cdot X')$ is rationally equivalent to $\mathrm{trop}(X)\cdot\mathrm{trop}(X')$ as divisors on the (possibly degenerated) tropical curve $\mathrm{trop}(X)\cap\mathrm{trop}(X')$. In the higher dimensional case, Osserman and Rabinoff proved in \cite[Theorem 6.4]{osserman2011lifting} that when $X$ and $X'$ are of complementary codimension, the number of points of $X\cap X'$, after a suitable compactification of the torus, that tropicalize to (the closure in the corresponding compactification of $\mathbb R^n$ of) a connected component $C$ of $\mathrm{trop}(X)\cap\mathrm{trop}(X')$ is the same as the number of points in $\mathrm{trop}(X)\cdot\mathrm{trop}(X')$ supported on $C$, where both numbers are assumed to be finite and are counted with multiplicities.

In this paper we generalize the result of \cite{osserman2011lifting} in several directions. First assume $X$ and $X'$ do not necessarily intersect properly, hence the intersection multiplicity is not well defined. Instead of counting points of their intersection, we look at the refined intersection product $\overline X\cdot \overline X'$ on $\overline X\cap\overline X'$ which as a cycle class is represented by a formal sum of points supported on $\overline X\cap\overline X'$ (\cite[\S 8]{fulton1998intersection}), where the closures are taken inside the toric variety $X(\Delta)$ associated to a certain unimodular fan $\Delta$ (hence $X(\Delta)$ is smooth). Restricting to the closure of a component of $\mathrm{trop}(X)\cap\mathrm{trop}(X')$ in the corresponding compactification, denoted by $N_\mathbb R(\Delta)$, of $\mathbb R^n$ we have:

\begin{thm}\label{introduction theorem 1}
Let $X$ and $ X'$ be closed subschemes of $T$ of complementary codimensions, $C$ a connected component of $\mathrm{trop}(X)\cap \mathrm{trop}(X')$. Then there exists a fan $\Delta$ such that the degree of the subset of $\overline X\cdot\overline X'$ that tropicalizes to $\overline C$ is the same as that of $\mathrm{trop}(X)\cdot\mathrm{trop}(X')$ supported on $C$.
\end{thm}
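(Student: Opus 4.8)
The plan is to deduce the asserted equality of two integers from the tropical rational equivalence announced above, by rephrasing both sides as degrees of rationally equivalent zero-dimensional cycles. First I would fix the fan $\Delta$. Since $\mathrm{trop}(X)$ and $\mathrm{trop}(X')$ are rational polyhedral complexes, a common refinement of their union and of $\mathrm{trop}(X)\cap\mathrm{trop}(X')$ can be taken to be the support of a rational fan, which after barycentric-type subdivision we may assume unimodular, so that $X(\Delta)$ is smooth. I would require $\Delta$ fine enough that every connected component of $\mathrm{trop}(X)\cap\mathrm{trop}(X')$ is a union of cells, so that the compactified closures $\overline C$ of distinct components are disjoint in $N_\mathbb{R}(\Delta)$, and that $\overline X$ and $\overline X'$ form a tropical compactification in the sense of Tevelev, meeting each boundary orbit in the expected dimension. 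The essential role of $\Delta$ is that, although $C$ may be unbounded in $\mathbb{R}^n$, its closure $\overline C$ is \emph{compact} in $N_\mathbb{R}(\Delta)$; hence the part of $\overline X\cap\overline X'$ lying over $\overline C$ is complete and carries a well-defined finite degree.

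Next, since $X$ and $X'$ have complementary codimensions, the refined intersection product $\overline X\cdot\overline X'$ is a class in $A_0(\overline X\cap\overline X')$ represented by a formal sum of closed points (\cite[\S 8]{fulton1998intersection}). I would partition this zero-cycle according to whether a point tropicalizes into $\overline C$ or into the closure of another component; the separation arranged above makes this partition clean, and the degree of the $\overline C$-part is exactly the left-hand quantity. Tropicalizing this zero-cycle and using that tropicalization preserves the degree of a complete zero-cycle (the total weight of $\sum n_i[p_i]$ equals $\sum n_i$, and distinct components do not collide), the left-hand number equals the degree of the tropical zero-cycle $\mathrm{trop}(\overline X\cdot\overline X')$ supported on $\overline C$.

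I would then invoke the rational equivalence $\mathrm{trop}(\overline X\cdot\overline X')|_{\overline C}\sim(\mathrm{trop}(X)\cdot\mathrm{trop}(X'))|_C$ inside the complete tropical variety $N_\mathbb{R}(\Delta)$, together with the compatibility of the compactified stable intersection with the intersection product of \cite{meyer2011intersection}. Because both tropical cycles are zero-dimensional and $\overline C$ is complete, rational equivalence forces equal total degree; the degree of the right-hand side is by definition the number of points of $\mathrm{trop}(X)\cdot\mathrm{trop}(X')$ on $C$ counted with multiplicity. Chaining the three equalities then yields the theorem.

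The main obstacle is the rational equivalence itself. The stable intersection is computed as the limit of $\mathrm{trop}(X)\cap(\mathrm{trop}(X')+v)$ for generic small displacements $v$, whereas $\mathrm{trop}(\overline X\cdot\overline X')$ records the honest intersection at $v=0$; one must exhibit the family interpolating between them and verify that its difference is a boundary, i.e.\ a sum of divisors of piecewise-linear functions in the sense of tropical intersection theory. Crucially, as $v\to 0$ intersection points can escape along the unbounded directions of $C$, so this cancellation holds only after compactifying, the boundary strata of $\Delta$ being precisely what absorbs the lost contributions; this is why the passage to $N_\mathbb{R}(\Delta)$ is unavoidable rather than cosmetic, and it is the reason this argument genuinely extends \cite[Theorem 6.4]{osserman2011lifting} beyond the proper case. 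A secondary technical point, which I would isolate as a lemma, is that a single $\Delta$ can be chosen to simultaneously provide the Tevelev-transversality making $\overline X\cdot\overline X'$ computable by local boundary contributions and the subdivision making the stable intersection on $C$ compatible with its compactification.
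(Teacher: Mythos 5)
Your chain of reductions---the degree of the part of $\overline X\cdot\overline X'$ tropicalizing to $\overline C$ equals the degree of its tropicalization as a zero-cycle; rational equivalence of zero-cycles on $N_\mathbb R(\Delta)$ preserves degree; hence equality with the degree of the stable intersection on $C$---is sound as bookkeeping, but it rests entirely on the rational equivalence $[\mathrm{trop}(i^*_{\overline C}(\overline X\cdot\overline X'))]=[\mathrm{trop}(X)\cdot\mathrm{trop}(X')|_C]$, and that is precisely where your proposal has a genuine gap. In this paper that equivalence is Theorem \ref{introduction theorem 2} (proved as Theorem \ref{intersect rationally equivalent}), and its proof goes through Lemma \ref{intersect several ample divisors}, which is deduced from the multiple-intersection version (Theorem \ref{infinite multiple intersection}) of the very statement you are trying to prove. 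Invoking ``the rational equivalence announced above'' therefore makes your argument circular relative to the paper's logical structure; as a blind proof you would need to establish that equivalence independently, and you only name it as ``the main obstacle'' with a sketch.

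The sketch itself cannot work as stated. You propose a purely tropical interpolation between $\mathrm{trop}(X)\cap(\mathrm{trop}(X')+v)$ for small $v$ and the situation at $v=0$, with the difference exhibited as a sum of divisors of piecewise-linear functions. But $\mathrm{trop}(\overline X\cdot\overline X')$ is not computable from $\mathrm{trop}(X)$ and $\mathrm{trop}(X')$ alone: it depends on the algebraic geometry of $X$ and $X'$ (two hypersurfaces with identical tropicalizations may have empty or nonempty intersection, as in the introduction's example $x=1$ versus $x=1+a$). Any proof must compare the algebraic intersection class with tropical data, and this is where the paper's actual work lies: one perturbs \emph{algebraically}, replacing $\overline X'$ by $t\overline X'$ in a $G_m$-family, uses Fulton's specialization of refined intersection classes (Lemma \ref{family}) to compare fibers, proves by nonarchimedean analytic arguments (properness and finiteness of the family of intersection points lying over a thickening of $\overline C$, via \cite[Propositions 5.7 and 5.8]{osserman2011lifting}) that the relevant degree is constant as $\mathrm{val}(t)$ varies in $[-\epsilon,\epsilon]$, and finally invokes the proper-intersection lifting theorem \cite[Theorem 6.3]{osserman2011lifting} for $\mathrm{val}(t)\neq 0$. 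Your proposal supplies no substitute for this algebraic-to-tropical bridge. (Two smaller points: the Tevelev-type transversality you impose on $\Delta$ is unnecessary---a unimodular compactifying fan for $C$ suffices, with the compactifying-datum and thickening machinery of \cite{osserman2011lifting} guaranteeing both that $\overline C$ stays separated from the closures of the other components and that $Z_{\overline C}$ is proper; and your step identifying the algebraic degree with the degree of the tropicalized representative zero-cycle is fine.)
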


In particular we are allowed to consider self-intersections of subschemes of $T$ (see Example \ref{selfintersection}), which is not mentioned in \cite{osserman2011lifting}. The idea of proof is similar to \cite[Theorem 6.4]{osserman2011lifting}, namely, we show that the intersection cycle $\overline X\cdot\overline X'$ can be approached by the intersection of $\overline X$ and a perturbation of $\overline X'$, when restricted to a neighborhood of $\overline C$. Here by perturbation we mean $t\overline X'$ for some $t\in T$ with $\mathrm{val}(x_i(t))$ small enough. The argument requires passing to nonarchimedean analytic spaces. This case will be discussed in Section 4, see Theorem \ref{infinite intersection}. Moreover, a sufficient condition for the fan $\Delta$ will be given.

Theorem \ref{introduction theorem 1} is easily generalized to multiple intersections (see Theorem \ref{infinite multiple intersection}) which plays an important role in our next approach of generalization, namely testing higher dimensional intersections. Let $i_{\overline C}\colon Z_{\overline C}\rightarrow \overline X\cap\overline X'$ be the inclusion of the union of irreducible components of $\overline X\cap\overline X'$ that tropicalize to $\overline C$; note that this is an open and closed subset inclusion. Assuming $\dim (X)+\dim(X')$ is greater than or equal to $n$, 
we prove that, after restricting to $\overline C$, we have $\mathrm{trop}(\overline X\cdot \overline X')$ rationally equivalent to the closure of $\mathrm{trop}( X)\cdot\mathrm{trop}( X')$ as tropical cycles in $N_\mathbb R(\Delta)$. Specifically, we have:

\begin{thm}\label{introduction theorem 2}
Let $X$ and $ X'$ be closed subschemes of $T$ of pure dimensions $k$ and $l$, and $C$ a connected component of $\mathrm{trop}(X)\cap \mathrm{trop}(X')$. Then there is a family of fans $\Delta$ such that $$[\mathrm{trop}(i^*_{\overline C}(\overline X\cdot \overline X'))]=[\overline{\mathrm{trop}(X)\cdot\mathrm{trop}(X')|_{ C}}]\in A_{k+l-n}(N_\mathbb R(\Delta)).$$

\end{thm}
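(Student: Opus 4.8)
The plan is to reduce the statement to the complementary-codimension case already established in Theorem~\ref{infinite multiple intersection} (the multiple-intersection refinement of Theorem~\ref{introduction theorem 1}), by \emph{testing} the two $(k+l-n)$-dimensional tropical cycles
\[
\mathrm{trop}\big(i^*_{\overline C}(\overline X\cdot\overline X')\big)\quad\text{and}\quad \mathrm{trop}(X)\cdot\mathrm{trop}(X')|_C
\]
against tropical cycles of complementary dimension in $N_{\mathbb R}(\Delta)$. I would take $\Delta$ complete and unimodular (any fan may be refined to such a one), so that $N_{\mathbb R}(\Delta)$ is the toric tropical variety of a smooth complete toric variety. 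First I would record the tropical counterpart of Poincar\'e duality: the group $A_\bullet(N_{\mathbb R}(\Delta))$ is finitely generated and the stable-intersection pairing $A_{k+l-n}\times A_{2n-k-l}\to A_0=\mathbb Z$ is perfect, this being the cycle-theoretic shadow of the Fulton--Sturmfels description of the Chow cohomology of $X(\Delta)$ by Minkowski weights. Consequently a class in $A_{k+l-n}$ is pinned down by its degrees against all $(2n-k-l)$-dimensional classes, and it suffices to show that the two cycles above acquire the same degree after stable intersection with every member of a spanning family of test cycles.

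For the spanning family I would use tropicalizations $\mathrm{trop}(Y)$ of closed subschemes $Y\subseteq T$ of dimension $2n-k-l$, together with the toric boundary strata of $N_{\mathbb R}(\Delta)$ needed to reach classes not met by subvarieties of the dense torus. Translating $Y$ by a generic $t\in T$ with $\mathrm{val}(x_i(t))$ small, I would arrange that $\overline Y$ meets $\overline X$ and $\overline X'$ in the expected dimension and that $\mathrm{trop}(Y)$ cuts $C$ in complementary dimension; after possibly refining $\Delta$ (this is precisely the family of fans occurring in the statement, the refinement being allowed to depend on $Y$) I would also ensure that the hypotheses of Theorem~\ref{infinite multiple intersection} hold for the triple $X,X',Y$ over the component $C$.

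Now fix such a $Y$. On the tropical side, associativity of the stable intersection gives
\[
\big(\mathrm{trop}(X)\cdot\mathrm{trop}(X')|_C\big)\cdot\mathrm{trop}(Y)=\big(\mathrm{trop}(X)\cdot\mathrm{trop}(X')\cdot\mathrm{trop}(Y)\big)\big|_C,
\]
a $0$-dimensional cycle whose degree is computed by Theorem~\ref{infinite multiple intersection} as the degree of the part of $\overline X\cdot\overline X'\cdot\overline Y$ tropicalizing to $\overline C$. On the algebraic side, the projection formula and the compatibility of the refined intersection product with the further intersection by $\overline Y$ (\cite[\S 8]{fulton1998intersection}) identify this number with the degree of $i^*_{\overline C}(\overline X\cdot\overline X')\cdot\overline Y$; applying Theorem~\ref{infinite multiple intersection} once more, now in the complementary-codimension case to $i^*_{\overline C}(\overline X\cdot\overline X')$ and $Y$, shows that its tropicalization has the same degree as $\mathrm{trop}(i^*_{\overline C}(\overline X\cdot\overline X'))\cdot\mathrm{trop}(Y)$. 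Matching the two computations yields equal degrees against every test cycle, hence numerical equivalence, and therefore, by the perfect pairing recorded above, the asserted equality in $A_{k+l-n}(N_{\mathbb R}(\Delta))$.

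The main obstacle I foresee is twofold. First, the compatibility of tropicalization with the refined intersection product must be made precise after cutting by $\overline Y$: the localization to the component $\overline C$ has to survive the extra intersection, and the boundary behaviour in $N_{\mathbb R}(\Delta)$ must be controlled so that no degree leaks to strata outside $\overline C$; this is exactly where the freedom to refine $\Delta$ is spent and where the nonarchimedean-analytic moving argument underlying Theorem~\ref{infinite multiple intersection} must be run uniformly in $Y$. Second, and more seriously, one must show that tropicalizations of torus subvarieties, enlarged by boundary strata, span $A_{2n-k-l}(N_{\mathbb R}(\Delta))$ finely enough that vanishing of all test degrees forces the rational-equivalence class to vanish. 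Establishing this perfectness of the tropical intersection pairing on $N_{\mathbb R}(\Delta)$, compatibly with the localization to $C$, is the crux of the argument.
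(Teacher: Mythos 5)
Your overall strategy---pinning down the class $[\mathrm{trop}(i^*_{\overline C}(\overline X\cdot \overline X'))]$ by pairing against complementary-dimensional test cycles, computing both pairings via the multiple-intersection Theorem~\ref{infinite multiple intersection}, and concluding by duality for Minkowski weights---is indeed the paper's strategy. But the two points you flag as ``obstacles'' are precisely where your argument breaks, and the paper's resolution is concretely different from what you propose. The first gap is your choice of test family: generic translates of $\mathrm{trop}(Y)$ for arbitrary $(2n-k-l)$-dimensional $Y\subseteq T$, plus boundary strata. For the pairing to descend to rational-equivalence classes in $N_{\mathbb R}(\Delta)$, the test cycle must be compatible with $\Delta$; otherwise ``degree of the stable intersection with the part supported on $C$'' depends on the representative. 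The paper exhibits this failure explicitly in the example following Corollary~\ref{compact stable equality}: two cycles in the same class (the tropicalization of a representative line, and $\mathrm{trop}(X)\cdot\mathrm{trop}(X')|_P$) pair to $0$ and $1$ respectively against the non-compatible plane $F=\{x=0\}$. A generic translate of an arbitrary $Y$ will not be compatible with a fixed $\Delta$, and if you refine $\Delta$ to accommodate each $Y$ you change the group $A_{k+l-n}(N_{\mathbb R}(\Delta))$ in which the theorem is being asserted; moreover your auxiliary boundary-strata test cycles are not tropicalizations of subschemes of $T$ at all, so Theorem~\ref{infinite multiple intersection} says nothing about them. The paper dissolves all of this with one construction: it requires $X(\Delta)$ smooth and \emph{projective} and takes as test cycles the hypersurfaces of Lemma~\ref{hyperplane}, honest subschemes $H=V(f)\subseteq T_N$ attached to torus-invariant ample divisors whose tropicalization is the skeleton $|\Delta(n-1)|$ with suitable weights. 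These are simultaneously tropicalizations of torus subschemes (so the lifting theorem applies, via Lemma~\ref{intersect several ample divisors}), compatible with $\Delta$ (so pairing is well defined on classes, via Lemma~\ref{intersection compatible}), and generating (products of ample classes generate the Chow ring of a smooth projective toric variety). The ``perfect pairing'' you defer is then not a new tropical Poincar\'e duality to be established: it is imported from Fulton--Sturmfels through Theorem~\ref{chow group}.

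The second, more serious gap is your re-application of Theorem~\ref{infinite multiple intersection} ``to $i^*_{\overline C}(\overline X\cdot \overline X')$ and $Y$.'' That theorem concerns closed subschemes of the torus $T$, whereas $i^*_{\overline C}(\overline X\cdot \overline X')$ is a cycle class on $Z_{\overline C}\subseteq X(\Delta)$, defined only up to rational equivalence, whose representatives may have components lying entirely in the toric boundary. The identity you need at this step---that for an arbitrary cycle class $\alpha$ on $X(\Delta)$ the algebraic intersection degree against hyperplane sections equals the tropical degree of $\mathrm{trop}(\alpha)$ against the corresponding tropical cycles---is exactly the paper's Lemma~\ref{chow ring}, the statement that tropicalization induces the isomorphism $A_*(X(\Delta))\cong MW_*(\Delta)\cong A_*(N_{\mathbb R}(\Delta))$. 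The paper must prove this separately (boundary classes are handled through the orbit decomposition, classes meeting $T$ through proper intersections with generic translates of hyperplane sections and \cite[Theorem 5.2.3]{osserman2013lifting}), and it even notes in a footnote that the corresponding general claim in \cite{meyer2011intersection} rests on an unproven step. So this bridge is a substantive lemma, not a corollary of the lifting theorem, and without it your chain of equalities does not close.
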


See Section 2.3 or \cite[\S 2]{meyer2011intersection} for the definitions of tropical cycles and rational equivalence on $N_\mathbb R(\Delta)$. In particular, rational equivalence preserves the degrees of zero cycles. In Section 3 we develop a compactified stable intersection $``\cdot_c"$  on $N_\mathbb R(\Delta)$ of two tropical cycles of certain type which extends the stable intersection on $N_\mathbb R$. We check that this compactified stable intersection is compatible with the intersection product defined in loc.cit., which we denote by ``$\ast$", and as a result we have:

\begin{thm}\label{introduction theorem 2s}
Let $X$ and $ X'$ be as in Theorem \ref{introduction theorem 2}. For a certain family of tropical cycles $F$ in $\mathbb R^n$ we have  
$$\deg(\mathrm{trop}(i^*_{\overline C}(\overline X\cdot \overline X'))\cdot_c \overline F)=\deg(\overline{\mathrm{trop}(X)\cdot\mathrm{trop}(X')|_{ C}}\cdot_c \overline F).$$
\end{thm}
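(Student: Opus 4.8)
The plan is to deduce this statement from Theorem \ref{introduction theorem 2} together with the compatibility of the compactified stable intersection ``$\cdot_c$'' with the intersection product ``$\ast$'' of \cite[\S 2]{meyer2011intersection} that is established in Section 3. The guiding principle is that the degree of a zero-dimensional tropical cycle is invariant under rational equivalence, so it suffices to carry out the entire argument at the level of rational equivalence classes in $A_*(N_\mathbb R(\Delta))$ and then take degrees at the end.

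First I would pin down the family of tropical cycles $F$. These should be the cycles of complementary dimension $2n-k-l$, so that the intersection with a $(k+l-n)$-cycle is zero-dimensional, whose supports are in suitable position relative to the cycles in question, i.e. those $F$ for which $\cdot_c$ is defined and, by the compatibility proved in Section 3, computes the $\ast$-product of the corresponding classes. For such $F$ this compatibility yields
$$\deg\bigl(\mathrm{trop}(i^*_{\overline C}(\overline X\cdot \overline X'))\cdot_c F\bigr)=\deg\bigl([\mathrm{trop}(i^*_{\overline C}(\overline X\cdot \overline X'))]\ast [F]\bigr),$$
and the analogous identity with $\mathrm{trop}(X)\cdot\mathrm{trop}(X')|_C$ in place of $\mathrm{trop}(i^*_{\overline C}(\overline X\cdot \overline X'))$.

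Next I would invoke Theorem \ref{introduction theorem 2}, which supplies the rational equivalence
$$[\mathrm{trop}(i^*_{\overline C}(\overline X\cdot \overline X'))]=[\mathrm{trop}(X)\cdot\mathrm{trop}(X')|_C]\in A_{k+l-n}(N_\mathbb R(\Delta)).$$
Since the $\ast$-product of \cite[\S 2]{meyer2011intersection} descends to rational equivalence classes, intersecting both sides with $[F]$ preserves the equality, giving $[\mathrm{trop}(i^*_{\overline C}(\overline X\cdot \overline X'))]\ast [F]=[\mathrm{trop}(X)\cdot\mathrm{trop}(X')|_C]\ast[F]$ in $A_0(N_\mathbb R(\Delta))$. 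Taking degrees and using that the degree of a zero-cycle depends only on its class, then combining with the two compatibility identities of the previous paragraph, produces the claimed equation.

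The main obstacle is delineating precisely the family $F$ for which the identity ``$\deg(\cdot_c)=\deg(\ast)$'' holds at \emph{both} ends of the equation at once; this forces a choice of $\Delta$ (or a refinement thereof) for which the supports of $F$, of $\mathrm{trop}(i^*_{\overline C}(\overline X\cdot\overline X'))$, and of $\mathrm{trop}(X)\cdot\mathrm{trop}(X')|_C$ are in sufficiently general position in $N_\mathbb R(\Delta)$ for the computation of Section 3 to apply. Once this positioning and the well-definedness of $\ast$ on $A_*(N_\mathbb R(\Delta))$ are in hand, the remainder of the argument is formal.
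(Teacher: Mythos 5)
Your proposal follows essentially the same route as the paper: the statement (Corollary \ref{compact stable equality}) is deduced from the rational equivalence of Theorem \ref{intersect rationally equivalent} together with the compatibility $[\alpha]\ast[\beta]=[\alpha\cdot_c\beta]$ of Lemma \ref{intersection compatible}, plus the invariance of the degree of a zero-cycle under rational equivalence. The one clarification worth making is that the family $F$ is not cut out by any general-position requirement relative to the supports of $\mathrm{trop}(i^*_{\overline C}(\overline X\cdot\overline X'))$ or $\mathrm{trop}(X)\cdot\mathrm{trop}(X')|_C$ --- stable intersection needs no transversality --- but is exactly the family of tropical cycles $F\subset N_\mathbb R$ of complementary dimension that are \emph{compatible with the fan} $\Delta$, which is the hypothesis under which $\cdot_c$ is defined and Lemma \ref{intersection compatible} applies.
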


Theorem \ref{introduction theorem 2} will be restated and proved as Theorem \ref{intersect rationally equivalent} and Theorem \ref{introduction theorem 2s} as Corollary \ref{compact stable equality}, where the notations are explained. The main idea of the proof of Theorem \ref{introduction theorem 2} is, knowing that there is a natural isomorphism (induced by the tropicalization map) between the Chow rings of $X(\Delta)$ and $N_\mathbb R(\Delta)$ through Minkovski weights on $\Delta$ (see Lemma \ref{chow ring}), one actually only need to check the equality (between degrees) above. This can be accomplished by using the ``multiple intersection" version of Theorem \ref{introduction theorem 1}, since we can restrict to cycles which is 
a sum of products of the tropicalizations of hypersurfaces in $X(\Delta)$. 

Note that Theorem \ref{introduction theorem 2} generalizes Theorem \ref{introduction theorem 1} except that the fan condition is more restrictive, and it is different from just saying that $\mathrm{trop}([\overline X]\cdot[\overline X'])=[\mathrm{trop}(\overline X)]\ast[\mathrm{trop}(\overline X')]$ (Lemma \ref{chow ring}) even when $\mathrm{trop}(X)\cap \mathrm{trop}(X')$ only has one component, see Remark \ref{selfinter rem} and Example \ref{selfinter ex}.

In a complementary direction, motivated by the work of Osserman and Payne, in Section 6 we generalize \cite[Theorem 6.4]{osserman2011lifting} in the case of an ambient space which is a reduced closed subscheme of $T$:

\begin{thm}\label{introduction theorem 3}
Let $Y$ be a reduced closed subscheme of $T$ and $X$ and $ X'$ be subschemes of $Y$ of complementary codimension, let $C$ be a connected component of $\mathrm{trop}(X)\cap\mathrm{trop}(X')$ which is contained in the relative interior of a maximal face $\iota$ of $\mathrm{trop}(Y)$ of multiplicity one. For a certain family of fans $\Delta$, if there are only finitely many points of $\overline X\cap\overline X'$ that tropicalize to $\overline C$ then we have: 
$$\sum_{x\in Z_{\overline C}} i(x,\overline X\cdot\overline X';\overline Y)=\sum_{u\in C} i(u,\mathrm{trop}(X)\cdot\mathrm{trop}(X');\mathrm{trop}(Y)).$$
\end{thm}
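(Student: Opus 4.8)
The plan is to reduce the statement to the already-established case $Y=T$, namely to (the multiple-intersection form of) Theorem~\ref{introduction theorem 1}, exploiting the hypothesis that $C$ lies in the relative interior of a maximal face $\iota$ of $\mathrm{trop}(Y)$ of multiplicity one. That multiplicity-one condition is precisely what allows one to replace $Y$, locally around $C$, by a translate of the subtorus of $T$ whose cocharacter space is the linear span of $\iota$, after which the ambient space looks like a torus of dimension $\dim Y$ and the previous results apply.

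First I would choose the fan $\Delta$ from the admissible family so that $X(\Delta)$ is smooth, the recession fans of $\mathrm{trop}(X)$, $\mathrm{trop}(X')$ and $\mathrm{trop}(Y)$ together with the face $\iota$ are all compatible with $\Delta$, and the perturbation argument used in Theorem~\ref{infinite intersection} applies in a neighborhood of $\overline C$; since only finitely many points of $\overline X\cap\overline X'$ tropicalize to $\overline C$, the set $Z_{\overline C}$ is then finite, and I would arrange each of its points to lie in a single torus-orbit stratum of $X(\Delta)$ on which the local structure of $\overline Y$ is controlled. Fixing $w$ in the relative interior of $\iota$, the initial degeneration $\mathrm{in}_w(Y)$ is, by multiplicity one, reduced and geometrically irreducible, and equals a translate of the subtorus $H\subseteq T$ with $\mathrm{trop}(H)=L:=\mathbb R\cdot(\iota-w)$; thus both the tropical and the algebraic intersection data of $X$ and $X'$ in $Y$ near $C$ are governed by the $(\dim Y)$-dimensional torus $H$.

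The core of the argument is then two local comparisons. On the tropical side, for each $u\in C$ I would show that $i(u,\mathrm{trop}(X)\cdot\mathrm{trop}(X'),\mathrm{trop}(Y))$ coincides with the ordinary stable-intersection multiplicity of $\mathrm{trop}(X)$ and $\mathrm{trop}(X')$ computed inside the linear space $L$; this follows from the multiplicity-one normalization of the ambient stable intersection, since $\mathrm{trop}(Y)$ carries weight $1$ along $\iota$ and so contributes no extra factor. On the algebraic side, for each $x\in Z_{\overline C}$ I would show that $i(x,\overline X\cdot\overline X',\overline Y)$ equals the corresponding local intersection multiplicity of $\overline X$ and $\overline X'$ computed inside the closure $\overline H$ of the appropriate translate of $H$. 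Granting these, applying Theorem~\ref{introduction theorem 1} in its multiple-intersection form (Theorem~\ref{infinite multiple intersection}) to $X$ and $X'$ regarded inside the torus $H$ yields the equality of the two sides of the asserted formula.

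The hard part is the algebraic comparison: one must show that Fulton's refined local intersection multiplicity in the possibly singular ambient $\overline Y$ agrees with the one taken in the smooth $\overline H$. My approach would be to pass to the completed local (or non-archimedean analytic) ring of $\overline Y$ at $x$ and use that, by the multiplicity-one hypothesis, its associated graded for the valuation-weight filtration is that of the reduced subtorus coset $\mathrm{in}_w(Y)$, so that the two numbers agree after degeneration. This is delicate because $x$ may lie on the boundary $X(\Delta)\setminus T$, which forces one to work with the toric local structure along the star of the cone containing $x$ rather than in $T$ itself; the finiteness of $Z_{\overline C}$ is what keeps these contributions finite and legitimizes the reduction to the torus case.
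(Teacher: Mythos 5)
Your high-level idea is the same as the paper's (and is exactly the point the paper attributes to Payne): the multiplicity-one hypothesis on $\iota$ should let one replace the ambient $Y$, locally around $\overline C$, by a torus of dimension $\dim Y$, and then quote the torus-ambient result of \cite[Theorem 6.4]{osserman2011lifting}. But the mechanism you propose for this replacement has a genuine gap. Your reduction runs through the initial degeneration: $\mathrm{in}_w(Y)$ is a single reduced coset $tH$ of the subtorus $H$ with $\mathrm{trop}(H)=L$. That is true, but $\mathrm{in}_w(Y)$ is a scheme over the \emph{residue} field, and knowing it gives you no map taking $X$ and $X'$ themselves to subschemes of a $d$-dimensional torus. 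Consequently your key algebraic comparison, ``$i(x,\overline X\cdot\overline X',\overline Y)$ equals the local intersection multiplicity of $\overline X$ and $\overline X'$ computed inside $\overline H$,'' is not even well-posed: $\overline X$ and $\overline X'$ are not contained in $\overline H$ (they only have the same tropicalization/initial data near $\iota$), so there is no intersection product of them in $\overline H$ to compare with. Likewise the step ``the associated graded of the local ring of $\overline Y$ at $x$ is that of $tH$, so the two numbers agree after degeneration'' is not a valid inference: intersection numbers are not read off from associated graded rings, and conservation of number under a degeneration requires flatness and properness arguments over the base of the degeneration --- which is precisely the analytic machinery the theorem's proof has to supply, so invoking ``agreement after degeneration'' here is circular. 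Finally, with no subschemes of $H$ in hand, the concluding application of Theorem \ref{infinite multiple intersection} ``to $X$ and $X'$ regarded inside the torus $H$'' has nothing to apply to.

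The paper fills exactly this gap by a different, concrete transport mechanism: it takes $\Delta$ to be a fan inside $W_\mathbb R$ (so $X(\Delta)=Y(\Delta)\times T^{n-d}$), chooses a splitting of $N\rightarrow N/W$, and proves (Lemma \ref{projection isomorphism}) that the induced projection restricts to an \emph{isomorphism} of analytic domains $\mathcal Y^P\rightarrow \mathcal U^{P'}$; this uses the Baker--Payne--Rabinoff result that, by multiplicity one, the projection is finite of pure degree one, together with reducedness of $\overline Y$ and normality of the target affinoid. This isomorphism is what lets one replace $X,X'\subseteq Y$ by their scheme-theoretic images $\pi(X),\pi(X')\subseteq T_W$, with equality of local intersection numbers (via \cite[Proposition 5.7]{osserman2011lifting}) and of tropical multiplicities (via degrees of the projections, \cite[Corollary 4.32]{baker2011nonarchimedean}), after which \cite[Theorem 6.4]{osserman2011lifting} applies. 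Note also that this projection argument needs $\dim C\leq\min\{k-1,l-1\}$ to choose a sufficiently generic splitting, and the paper removes that restriction by the auxiliary trick $Y_+=Y\times G_{\mathrm m}^2$, $X_+=X\times G_{\mathrm m}\times\{1\}$, $X'_+=X'\times\{1\}\times G_{\mathrm m}$; your sketch would need an analogous device even if its local comparisons were repaired.
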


See below for explanations of the notations. In the proof of Theorem \ref{introduction theorem 3} we consider the analytification of $Y$. Assuming $Y$ is of dimension $d$, the key point is that locally at a point that tropicalizes to a simple point in $\iota$ we have that $\overline Y^\mathrm{an}$ is isomorphic to the analytic torus of dimension $d$. Hence one can essentially replace $Y$ by an algebraic torus, and all the informations that ``lie in" $\iota$ will be preserved. Therefore the theorem becomes a corollary of \cite[Theorem 6.4]{osserman2011lifting}. See Theorem \ref{lifting reduced ambient space} for details.

\subsection*{Acknowledgements}
I would like to thank Brian Osserman for introducing this problem and for many helpful suggestions. I would also like to thank Sam Payne for pointing out the key point in Theorem \ref{introduction theorem 3} above, and thank Johannes Rau for making the author aware of Meyer's paper \cite{meyer2011intersection}. Moreover, I would like to thank the referee for carefully reading the manuscript and for the suggestions about revising.

\subsection*{Notations}
In the sequel we fix an algebraically closed non-archimedean field $K$ with nontrivial valuation group $G=\mathrm{val}(K)$. Let $N$ be a lattice of dimension $n$. Let $T_N$ be the algebraic torus of dimension $n$ whose lattice of characters, usually denoted by M, is dual to $N$. Denote $N_\mathbb R=N\otimes \mathbb R$. We always identify $N_\mathbb R$ with $\mathbb R^n$ when $N$ is specified. For a polyhedron (resp. polyhedral complex) $P$ we denote the recession cone (resp. recession fan) of $P$ by $\rho(P)$, and the relative interior by $\mathrm{relint}(P)$. Denote $P(m)$ the set of faces of $P$ of dimension $m$. 

Let $\Sigma$ be a fan in $N_\mathbb R$ and $\tau\in\Sigma$. We denote $\mathrm{Star}_\Sigma(\tau)=\{\overline\sigma\subset \mathbb R^n/\mathbb R \tau|\tau\prec\sigma\in\Sigma \}$ as a fan in $\mathbb R^n/\mathbb R\tau$. We also set
$\mathrm{Star}(\tau,\Sigma)$ a fan in $\mathbb R^n$ whose cones are of the form $\tilde \sigma=\{\lambda(\bold x-\bold y)|\lambda\geq 0,\bold x\in \sigma,\bold y\in \tau\}$ for all $\sigma\in\Sigma$ which contains $\tau$ as a face.

Let $X$ and $X'$ be closed subschemes of $Y$ such that $\dim(X)+\dim(X')=\dim(Y)$. We denote $i(x,X\cdot X';Y)$ the intersection multiplicity of $X$ and $X'$ at an isolated point $x$ of $X\cap X'$ at which $Y$ is smooth. If in addition $Y$ is a closed subscheme of an algebraic torus, we denote $i(u,\mathrm{trop}(X)\cdot\mathrm{trop}(X');\mathrm{trop}(Y))$ the multiplicity of $\mathrm{trop}(X)_u\cdot\mathrm{trop}(X')_u$ (at the origin) in $\mathrm{trop}(Y)_u$  for $u\in \mathrm{trop}(X)\cap\mathrm{trop}(X')$, where $\mathrm{trop}(X)_u$ is the star of $\mathrm{trop}(X)$ at $u$, constructed by translating $\mathrm{trop}(X)$ so that $u$ is at the origin and taking the cones spanned by faces of $\mathrm{trop}(X)$ that contain $u$. We usually omit $\mathrm{trop}(Y)$ in $i(u,\mathrm{trop}(X)\cdot\mathrm{trop}(X');\mathrm{trop}(Y))$ when $Y=T_N$ is a given torus. 

Let $X$ be a scheme of finite type over $K$, and $Z$ is a union of connected components of $X$ such that $Z$ is proper, and $\alpha$ a cycle class on $X$ of dimension zero. We denote $A_k(X)$ the $k$-th Chow group of $X$, denote $\int_Z \alpha$ the degree of the restriction of $\alpha$ on $Z$.

\section{Preliminaries}
In this section we recall some facts which are useful for later arguments. Note that most facts works for more general fields $K$.
\subsection{Refined intersection product} Let $i_X\colon X\rightarrow Y$ be a regular imbedding of codimension $d$ of schemes of finite type over $K$, and $f\colon Y'\rightarrow Y$ be a morphism. For any fiber square: 

$$\begin{tikzcd}  X'\rar{i_{X'}}\dar{} &Y'\dar{}\\ X\rar{i_X} &Y,
\end{tikzcd}$$
according to \cite{fulton1998intersection} there is a well-defined pull back map $i_X^!\colon A_k(Y')\rightarrow A_{k-d}(X')$, called the refined Gysin homomorphism.
Note that if $i_{X'}$ is also a regular embedding of codimension $d$ and we have fiber squares: 
$$\begin{tikzcd}  X''\rar{}\dar{} &Y''\dar{}\\ X'\rar{i_{X'}} &Y'
\end{tikzcd}\ \ \ \mathrm{and}\ \ \ 
\begin{tikzcd}  X''\rar{}\dar{} &Y''\dar{}\\ X\rar{i_X} &Y,
\end{tikzcd}$$
then the excess intersection formula implies that $i_{X'}^!=i_{X}^!\colon A_k(Y'')\rightarrow A_{k-d}(X'')$.

Let $Y$ be a smooth variety. Then the diagonal embedding $\delta\colon Y\rightarrow Y\times Y$ is a regular embedding of codimension $\dim(Y)$. Let $X$ and $X'$ be subvarieties of $Y$. The \textit{refined intersection product} $X\cdot X'$ is then defined as $\delta^!([X\times X'])\in A_*(X\cap X')$ with respect to the following square:
$$\begin{tikzcd}  X'\cap X\rar{}\dar{} &X\times X'\dar{}\\ Y\rar{\delta} &Y\times Y.
\end{tikzcd}$$
If $i_X\colon X \rightarrow Y$ is a regular embedding, then $X\cdot X'=i_X^!([X'])$.

Now consider families of cycle classes. Let $Z$ be an irreducible variety of dimension $m$, let $t\in Z$ be a regular closed point, which implies that the inclusion $i_t\colon t\hookrightarrow Z$ is a regular embedding of codimension $m$. Given a morphism $p\colon Y\rightarrow Z$ and a $(k+m)$-cycle $\alpha$ on $Y$, we get a family of $k$-cycle classes $i_t^!([\alpha])\in A_k(Y_t)$ for all $t\in Z$, where $i_t^!\colon A_{k+m}(Y)\rightarrow A_k(Y_t)$ is the refined Gysin homomorphism defined by the following fibre square.

$$\begin{tikzcd}  Y_t\rar\dar{p_t} &Y\dar{p}\\ t\rar{i_t} &Z.
\end{tikzcd}$$

Note that by the construction of refined Gysin homomorphism, we actually get a cycle class in $A_k(|\alpha|_t)$ where $|\alpha| $ is the support of $\alpha$. 

\begin{lem} \label{family}
Let $Z$ be a non-singular variety of dimension $m$, assume $t\in Z$ is rational over the ground field, and $Y$ is smooth over $Z$ of relative dimension $n$. If $\alpha\in A_{k+m}(Y)$ and $\beta \in A_{l+m}(Y)$ then
 $$i_t^!(\alpha)\cdot i_t^!(\beta)=i_t^!(\alpha\cdot\beta)$$ in $A_{k+l-n}(|\alpha|_t\cap |\beta|_t)$.
\end{lem}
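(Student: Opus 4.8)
The statement is essentially the compatibility of the refined intersection product with the refined Gysin pullback induced by a regular point of the base $Z$. As the footnote indicates, this is Corollary 10.1 in \cite{fulton1998intersection} at the level of $A_{k+l-n}(Y_t)$; the only new content is the refinement of the supports, i.e. upgrading the equation to hold in $A_{k+l-n}(|\alpha|_t\cap|\beta|_t)$. So the plan is to isolate precisely where the support refinement enters and to verify that every step of the standard argument is compatible with it.

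The key observation is that both operations, $\cdot$ and $i_t^!$, are instances of refined Gysin homomorphisms attached to regular embeddings, so the proof should proceed by a commutativity-of-Gysin-maps argument. First I would recall that $i_t^!(\alpha)\cdot i_t^!(\beta)$ is computed by pulling back $[i_t^!(\alpha)\times i_t^!(\beta)]$ along the diagonal $\delta\colon Y_t\to Y_t\times Y_t$, while $i_t^!(\alpha\cdot\beta)$ is the $i_t^!$-pullback of $\delta_Y^!([\alpha\times\beta])$ along the diagonal $\delta_Y\colon Y\to Y\times Y$. The natural way to reconcile these is to exploit that $Y$ is smooth over $Z$ of relative dimension $n$, so the relative diagonal $Y\to Y\times_Z Y$ is a regular embedding of codimension $n$, and the fiber over $t$ of this relative diagonal is exactly the absolute diagonal of $Y_t$. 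One then sets up a diagram of fiber squares relating $Y_t\times Y_t$, $Y_t\times_t Y_t$, $Y\times Y$ and $Y\times_Z Y$, and applies the commutativity of refined Gysin homomorphisms for regular embeddings (the compatibility $i^!=j^!$ and the commutativity $i_t^!\delta^! = \delta^! i_t^!$ coming from Theorem 6.4 and the functoriality results of \cite{fulton1998intersection}). The point is that $i_t^!(\alpha)\times i_t^!(\beta)$ and $i_t^!(\alpha\times\beta)$ agree under the identification of $Y_t\times Y_t$ as a fiber, which follows from compatibility of Gysin maps with exterior products.

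The support refinement is the step that requires care rather than the commutativity itself. Since the refined Gysin homomorphism, by its construction via deformation to the normal cone, always produces a cycle class supported on the preimage of the base under the given embedding, each application of a Gysin map automatically lands in the appropriately refined Chow group. Concretely, $i_t^!(\alpha)$ naturally lives in $A_k(|\alpha|_t)$ and $i_t^!(\beta)$ in $A_l(|\beta|_t)$, and pulling back their exterior product along the diagonal of $Y_t$ yields a class supported on $|\alpha|_t\cap|\beta|_t$; symmetrically, $\delta_Y^!([\alpha\times\beta])$ is supported on $|\alpha|\cap|\beta|$, whose fiber over $t$ is $|\alpha|_t\cap|\beta|_t$, so applying $i_t^!$ keeps us in $A_{k+l-n}(|\alpha|_t\cap|\beta|_t)$. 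I would verify that in each fiber square the intersection of supports on which the various Gysin maps are forced to land is literally the same subscheme $|\alpha|_t\cap|\beta|_t$, so that the equality of \cite[Cor.\ 10.1]{fulton1998intersection} in $A_{k+l-n}(Y_t)$ is the pushforward of an equality that already holds in $A_{k+l-n}(|\alpha|_t\cap|\beta|_t)$.

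The main obstacle I anticipate is purely bookkeeping: making the web of fiber squares commute on the nose and matching the supports at each stage, rather than any substantive new idea. In particular one must check that the two a priori different regular embeddings producing the class — the absolute diagonal of the fiber $Y_t$ versus the base change of the relative diagonal of $Y/Z$ — give the same refined Gysin map on the relevant Chow group, which is exactly the content of the excess-intersection/compatibility statement $i^!=j^!$ quoted earlier in the excerpt. Once that identification is in place together with the commutativity of Gysin homomorphisms, the refined equality follows formally, and the footnote's claim that ``the same argument works'' is justified by observing that none of the manipulations ever enlarge the support beyond $|\alpha|_t\cap|\beta|_t$.
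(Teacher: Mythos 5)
Your proposal is correct and takes essentially the same approach as the paper: the paper's entire proof is the footnote citing Corollary 10.1 of \cite{fulton1998intersection} together with the remark that the same argument works in $A_{k+l-n}(|\alpha|_t\cap|\beta|_t)$, and what you reconstruct (the relative diagonal $Y\to Y\times_Z Y$ of codimension $n$ base-changing to the diagonal of $Y_t$, commutativity of refined Gysin maps, the compatibility $i^!=j^!$, and compatibility with exterior products) is exactly Fulton's argument. Your verification that each refined Gysin homomorphism lands in the Chow group of the expected support, so that the equality holds before pushing forward to $Y_t$, is precisely the content of the footnote's claim.
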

 \begin{proof}
A similar result is proved in \cite[Corollary 10.1]{fulton1998intersection} where both sides of the equation are in $A_{k+l-n}(Y_t)$, the same argument works in $ A_{k+l-n}(|\alpha|_t\cap |\beta|_t)$.
\end{proof}

\subsection{Divisors on toric varieties}Let $X(\Sigma)$ be the toric variety associated to a fan $\Sigma$ in $N_\mathbb R$. The set of $T_N$-invariant divisors on $X(\Sigma)$ has an explicit description by the combinatorial informations of $\Sigma$. In the following we list some useful conclusions for later arguments, for more details see \cite[\S 4,\S 6]{cox2011toric}.
Denote the group of $T_N$-invariant Weil divisors on $X(\Sigma)$ by Div$_{T_N}(X(\Sigma))$.

Any ray $\rho\in \Sigma(1)$ gives a codimension one orbit $\mathcal O(\rho)$ whose closure is a $T_N$-invariant prime divisor on $X(\Sigma)$. We denote this divisor by $D_\rho$. Let $u_\rho$ be the minimal lattice generator of $\rho$. Recall that $M$ is the lattice of characters of $T_N$. We then have:

\begin{prop}\label{toric divisor}
$\mathrm{(1)}$ Divisors of the form $D=\sum_{\rho\in\Sigma(1)}a_\rho D_\rho$ are precisely the divisors that is invariant under the torus action on $X(\Sigma)$. In other words we have:
$$\mathrm{Div}_{T_N}(X(\Sigma))=\displaystyle\bigoplus_{\rho\in\Sigma(1)}\mathbb Z D_\rho\subseteq \mathrm{Div}(X(\Sigma)).$$ 

$\mathrm{(2)}$For $m\in M$, the corresponding character $\chi^m$ is a rational function on $X(\Sigma)$ and the associated divisor is given by:$$\mathrm{div}(\chi^m)=\sum_{\rho\in\Sigma(1)}\langle m,u_\rho\rangle D_\rho.$$

$\mathrm{(3)}$ We have an exact sequence:$$0\rightarrow M\rightarrow \mathrm{Div}_{T_N}(X(\Sigma))\rightarrow \mathrm{Cl}(X(\Sigma))\rightarrow 0$$
where the first map is $m\rightarrow \mathrm{div}(\chi^m)$. In particular every Weil divisor is rationally equivalent to a $T_N$-invariant divisor. 

$\mathrm{(4)}$The space of global sections of $\mathcal O(D)$ is given by $$\Gamma(X(\Sigma),\mathcal O(D))=\displaystyle\bigoplus_{m\in P_D\cap M}K\cdot \chi^m$$ where $P_D=\{m\in M_\mathbb R|\langle m,u_\rho\rangle\geq -a_\rho\mathrm{\ for\ all\ }\rho\in \Sigma(1)\}$. 

$\mathrm{(5)}$ The divisor $D$ is Cartier if and only if for each $\sigma\in \Sigma$ there is a $m_\sigma\in M$ such that $\langle m_\sigma,u_\rho\rangle=-a_\rho$ for all $\rho\in \sigma(1)$.

$\mathrm{(6)}$ Let $D=\sum_{\rho\in\Sigma(1)}a_\rho D_\rho$ be a cartier divisor on $X(\Sigma)$, let $m_\sigma$ be   as in (5). Then $D$ is ample if and only if $\langle m_\sigma,u_\rho\rangle>-a_\rho$ for all $\sigma\in \Sigma(n)$ and $\rho\in \Sigma(1)\backslash \sigma (1)$.

\end{prop}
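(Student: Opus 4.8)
The plan is to reduce every assertion to the local affine charts $U_\sigma=\mathrm{Spec}(K[S_\sigma])$, with $S_\sigma=\sigma^\vee\cap M$, out of which $X(\Sigma)$ is glued, and to exploit the orbit-cone correspondence $\sigma\mapsto\mathcal O(\sigma)$ with $\dim\mathcal O(\sigma)=n-\dim\sigma$. Part (1) then follows because a prime divisor invariant under $T_N$ is the closure of a codimension-one orbit, and by the orbit-cone correspondence these are exactly the $\overline{\mathcal O(\rho)}=D_\rho$ for $\rho\in\Sigma(1)$; distinctness of the $D_\rho$ makes the direct sum decomposition of $\mathrm{Div}_{T_N}(X(\Sigma))$ immediate.

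For (2), I would compute the order of vanishing of $\chi^m$ along each $D_\rho$ locally on $U_\rho$: the local ring of $X(\Sigma)$ at the generic point of $D_\rho$ is a discrete valuation ring whose valuation sends $\chi^{m'}$ to $\langle m',u_\rho\rangle$, so $\mathrm{ord}_{D_\rho}(\chi^m)=\langle m,u_\rho\rangle$, and only the rays contribute to the divisor. Part (3) then follows from the localization (excision) sequence for class groups applied to the open immersion $T_N=X(\Sigma)\setminus\bigcup_\rho D_\rho\hookrightarrow X(\Sigma)$: since $\mathrm{Cl}(T_N)=0$ the map $\bigoplus_\rho\mathbb Z D_\rho\to\mathrm{Cl}(X(\Sigma))$ is surjective, and its kernel consists of the principal divisors supported on $\bigcup_\rho D_\rho$; any rational function with such a divisor is a unit on the torus, hence equals $c\,\chi^m$ for some $c\in K^*$, so by (2) the kernel is exactly the image of $M$. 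Injectivity of $m\mapsto\mathrm{div}(\chi^m)$ holds because its kernel $\{m:\langle m,u_\rho\rangle=0\text{ for all }\rho\}$ is trivial once the rays of $\Sigma$ span $N_\mathbb R$, as they do for the fans relevant here.

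For (4), I would use that $\Gamma(X(\Sigma),\mathcal O(D))$ is the space of rational functions $f$ with $\mathrm{div}(f)+D\geq 0$; being $T_N$-stable it decomposes into character eigenspaces, and by (2) the monomial $\chi^m$ satisfies the inequality precisely when $\langle m,u_\rho\rangle\geq -a_\rho$ for every $\rho$, that is, when $m\in P_D\cap M$. Part (5) is again local: on each $U_\sigma$ every invariant Cartier divisor is principal, so $D|_{U_\sigma}=\mathrm{div}(\chi^{m_\sigma})$ for some $m_\sigma\in M$, which by (2) is exactly the condition $\langle m_\sigma,u_\rho\rangle=-a_\rho$ on $\sigma(1)$; and $D$ is Cartier iff this holds on every cone.

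The hard part is (6), where I would pass to the support function $\varphi_D$ determined by $\varphi_D|_\sigma=m_\sigma$ and invoke the toric Nakai criterion: $D$ is ample iff $\varphi_D$ is strictly convex, meaning that each $m_\sigma$ strictly dominates the neighboring linear pieces. Unwinding strict convexity across the wall between adjacent maximal cones yields precisely $\langle m_\sigma,u_\rho\rangle>-a_\rho$ for $\rho\in\Sigma(1)\setminus\sigma(1)$. This is the one step genuinely requiring input beyond the bookkeeping of the previous parts, namely the equivalence between ampleness and strict convexity of the support function, and I would cite \cite[\S 6]{cox2011toric} for it rather than reprove the criterion here.
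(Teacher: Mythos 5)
Your proposal is correct, and it reconstructs exactly the standard arguments from Cox--Little--Schenck that the paper itself relies on without proof: the paper states this proposition purely as a recollection of known facts with a citation to \cite[\S 4,\S 6]{cox2011toric}, and your sketches (orbit--cone correspondence for (1), the DVR computation for (2), excision for (3), character-eigenspace decomposition for (4), local principality for (5), and strict convexity/toric Nakai for (6)) are precisely the proofs given there. Your caveats are also the right ones: left-exactness in (3) needs the rays to span $N_\mathbb R$, and the ampleness criterion in (6) is applied in the paper only to complete fans, which is exactly the setting where strict convexity of the support function characterizes ampleness.
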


On the other hand, for a full dimensional lattice polytope $P$, we denote by $X_P$ the toric variety of the normal fan of $P$. Let $P_1$ be the set of full dimensional lattice polytopes and $P_2$ be the set of pairs $(X(\Sigma),D)$ such that $\Sigma$ is a complete fan and $D$ is a torus-invariant ample divisor on $X(\Sigma)$. We then have the following relation:

\begin{thm} \label{correspondence of ampleness}
There is an one-to-one correspondences between $P_1$ and $P_2$ given by maps $P\mapsto (X_P,D_P)$ and $(X(\Sigma),D)\mapsto P_D$ that are inverses to each other. In particular, if $D$ is an ample divisor on $X(\Sigma)$ then $\Sigma$ is the normal fan of $P_D$.

\end{thm}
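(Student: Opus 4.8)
The plan is to make both maps explicit through facet presentations and support functions, and then verify that they compose to the identity in each direction, with the ampleness hypothesis supplying the strict convexity that pins down the fan. Starting from a full-dimensional lattice polytope $P$, its normal fan $\Sigma_P$ is complete precisely because $P$ is full-dimensional, and its rays $\rho$ correspond to the facets of $P$, with $u_\rho$ the primitive inner facet normal. I would write the facet presentation $P=\{m\in M_\mathbb{R}\mid \langle m,u_\rho\rangle\geq -a_\rho \text{ for all }\rho\in\Sigma_P(1)\}$, where $a_\rho=-\min_{m\in P}\langle m,u_\rho\rangle\in\mathbb{Z}$ (integrality coming from the vertices being lattice points), and set $D_P=\sum_\rho a_\rho D_\rho$. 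To see $(X_P,D_P)\in P_2$, note that each maximal cone $\sigma\in\Sigma_P(n)$ is the normal cone at a vertex $v_\sigma\in P\cap M$; this $v_\sigma$ satisfies $\langle v_\sigma,u_\rho\rangle=-a_\rho$ for the facets through it, i.e. for $\rho\in\sigma(1)$, so $m_\sigma:=v_\sigma$ witnesses Cartierness by Proposition~\ref{toric divisor}(5), and for $\rho\notin\sigma(1)$ the vertex lies strictly inside the halfspace, giving $\langle v_\sigma,u_\rho\rangle>-a_\rho$, which is exactly the ampleness criterion of Proposition~\ref{toric divisor}(6).

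For the reverse direction, take $(X(\Sigma),D)\in P_2$ with $D=\sum_\rho a_\rho D_\rho$ ample, hence Cartier with data $m_\sigma$ as in Proposition~\ref{toric divisor}(5). By Proposition~\ref{toric divisor}(4) we have $P_D=\{m\mid\langle m,u_\rho\rangle\geq -a_\rho\}$. The key is to pass to the support function $\varphi_D$, which is $\langle m_\sigma,\cdot\rangle$ on each $\sigma$ and satisfies $\varphi_D(u)=\min_{m\in P_D}\langle m,u\rangle$. The inequalities of Proposition~\ref{toric divisor}(6) say precisely that $\varphi_D$ is strictly convex with domains of linearity the cones of $\Sigma$. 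From this the $m_\sigma$ are distinct vertices of $P_D$ whose normal cones are the maximal cones $\sigma\in\Sigma(n)$, and since $\Sigma$ is complete these cover $N_\mathbb{R}$; thus $P_D$ is a full-dimensional lattice polytope and its normal fan is $\Sigma$, which yields the ``in particular'' clause $X_{P_D}=X(\Sigma)$.

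Finally I would check that the two maps are mutually inverse. For $P\mapsto(X_P,D_P)\mapsto P_{D_P}$, by construction $P_{D_P}=\{m\mid\langle m,u_\rho\rangle\geq -a_\rho\}=P$, since the $a_\rho$ are read directly off the facet presentation of $P$. For $(X(\Sigma),D)\mapsto P_D\mapsto(X_{P_D},D_{P_D})$, the previous paragraph gives $X_{P_D}=X(\Sigma)$, and the facet of $P_D$ normal to $\rho$ lies on $\langle m,u_\rho\rangle=-a_\rho$, so the facet presentation of $P_D$ returns the same coefficients and $D_{P_D}=\sum_\rho a_\rho D_\rho=D$. I expect the main obstacle to be exactly the identification of $\Sigma$ with the normal fan of $P_D$: one must convert the pointwise ampleness inequalities of Proposition~\ref{toric divisor}(6) into the global strict convexity of $\varphi_D$, and then argue that strict convexity forces each maximal cone $\sigma$ to be \emph{equal} to the normal cone of $m_\sigma$ rather than merely contained in it --- equivalently, that distinct maximal cones give distinct vertices $m_\sigma$ and that the lower-dimensional cones of $\Sigma$ correspond to the proper faces of $P_D$.
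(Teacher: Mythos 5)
The paper itself gives no proof of Theorem \ref{correspondence of ampleness}: it is quoted as a standard fact with a pointer to \cite[\S 6.1]{cox2011toric}, and your argument is essentially the one given in that reference --- read $D_P$ off the facet presentation, verify Cartierness and ampleness at the vertices $m_\sigma$ via Proposition \ref{toric divisor}(5)--(6), and in the reverse direction use the support function of $D$ and its strict convexity to show the $m_\sigma$ are distinct vertices of $P_D$ whose normal cones are exactly the maximal cones of $\Sigma$. Your sketch is correct and correctly isolates the only nontrivial step (converting the pointwise inequalities of Proposition \ref{toric divisor}(6) into the statement that each $\sigma$ \emph{equals} the normal cone of $P_D$ at $m_\sigma$), which is precisely where the cited reference does the work.
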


Here $P_D$ is as in Proposition \ref{toric divisor}(4). We won't use the definition of $D_P$ in the sequel, for more details see \cite[\S 6.1]{cox2011toric}.

\subsection{Tropical varieties and the extended tropicalization map}
Let $\Delta$ be a unimodular fan in $N_\mathbb R$. Associated to $\Delta$ there is a tropical variety $N_\mathbb R(\Delta)=\cup_{\tau\in\Delta}N_\mathbb R(\tau)$ with topology induced from natural gluing, where $N_\mathbb R(\tau)=\mathrm{Hom}_{\mathbb R\geq 0}(\tau^{\vee},\mathbb R\cup \{+\infty\})$. The space $N_\mathbb R(\Delta)$ satisfies the following conditions:

(1) $N_\mathbb R(\Delta)$ contains $N_\mathbb R=N_\mathbb R(\{0\})$ as an open dense subset and the addition on $N_\mathbb R$ extends to an action of $N_\mathbb R$ on $N_\mathbb R(\Delta)$, of which the orbits are of the form $\mathrm{Hom}_{\mathbb R_{\geq 0}}(\tau^\perp,\mathbb R)=N_\mathbb R/\mathbb R\tau$. This gives $N_\mathbb R(\Delta)$ a stratification by affine linear spaces: $N_\mathbb R(\Delta)=\coprod_{\tau\in\Delta} N_\mathbb R/\mathbb R\tau$. We denote the orbits by $O(\tau)$ for convenience.

(2) Let $x\in N_\mathbb R$ be a finite point, and $v\in N_\mathbb R$ be a direction vector. Then $x+\lambda v$ converges for $\lambda\rightarrow +\infty$ to a point $\overline x\in N_\mathbb R/\mathbb R\tau$ precisely when $v\in |\Delta|$, and $\tau$ is the unique face which contains $v$ as an (relative) interior point, in which case $\overline x$ is the image of $x$ under the projection $\pi_\tau\colon \mathbb R^n\rightarrow \mathbb R^n/\mathbb R\tau$.

(3) Let $X(\Delta)$ be the toric variety associated to $\Delta$. The tropicalization map on $T_N$ extends naturally to $\mathrm{trop}\colon X(\tau)\rightarrow N_\mathbb R(\tau)$ for $\tau\in \Delta$ by the formula $\langle u,\mathrm{trop}(\xi)\rangle=\mathrm{val}(x^u(\xi)) $ (note that when $\tau=\{0\}$ we get the original tropicalization map $\mathrm{trop}\colon T_N\rightarrow N_\mathbb R$ which is just taking the valuation of each coordinate). This is compatible with the orbits stratifications of $X(\Delta)$ and $N_\mathbb R(\Delta)$, namely we have $\mathrm{trop}(\mathcal O(\tau))=O(\tau)$ where $\mathcal O(\tau)$ denotes the torus orbit of $X(\Delta)$ that corresponds to $\tau$. In particular we have the extension $\mathrm{trop}\colon X(\Delta)\rightarrow N_\mathbb R(\Delta)$. Note that for a subscheme $X$ of $T_N$ we have $\overline{\mathrm{trop}(X)}=\mathrm{trop}(\overline X)$ where the closures are taken in $N_\mathbb R(\Delta)$ and $X(\Delta)$ respectively.

\begin{ex}
Given $e_1,...,e_n$ a basis of $N$, and $e_1',...,e_n'\in M$ the dual basis. Let $\tau=\mathbb R_{\geq 0}e_1+\cdots+\mathbb R_{\geq 0}e_n$. Then $\tau^\vee=\mathbb R_{\geq 0}e_1'+\cdots+\mathbb R_{\geq 0}e_n'$, hence $X(\tau)=\mathbb A^n$ and $N_\mathbb R(\tau)=(\mathbb R\cup\{+\infty\})^n$, as in the following diagram when $n=2$.
$$ \begin{tikzpicture}[
      scale=1,
      virtual/.style={thick,densely dashed,},
      trans/.style={thick,},
rans/.style={thick,red},
      classical/.style={thin,double,<->,shorten >=4pt,shorten <=4pt,>=stealth}
    ]
 \draw[trans] (0cm,0cm)--(2cm,0cm);
   \draw[trans] (0cm,0cm)--(0cm,2cm);
 \draw[trans](0.3cm,0.3cm)node{$O$};
 \draw[trans](0cm,0cm)node{$\bullet$};
 \draw[virtual] (0cm,2cm)--(0cm,3cm);
 \draw[trans](0cm,3cm)node{$\bullet$};
 \draw[trans](0cm,3.5cm)node{$(0,+\infty)$};
 \draw[trans](3cm,0cm)node{$\bullet$};
 \draw[virtual] (2cm,0cm)--(3cm,0cm);
 \draw[trans](3.8cm,0.3cm)node{$(+\infty,0)$};
 \draw[virtual] (0cm,3cm)--(3cm,3cm);
 \draw[virtual] (3cm,0cm)--(3cm,3cm);
 \draw[trans](3cm,3cm)node{$\bullet $};
 \draw[trans](3cm,3.5cm)node{$(+\infty,+\infty)$};
                 \end{tikzpicture}$$
For $(x_1,...,x_n)\in \mathbb A^n$ we have $\mathrm{trop}(x_1,...,x_n)=(\mathrm{val}(x_1),...,\mathrm{val}(x_n))$, where we set $\mathrm{val}(0)=+\infty$.
\end{ex}

For future reference we also denote $\mathcal V(\tau)$ and $V(\tau)=\mathrm{trop}(\mathcal V(\tau))$ the closed orbits correspond to $\tau$ in $X(\Delta)$ and $N_\mathbb R(\Delta)$ respectively.

\subsection{Miscellaneous}We next list some basic definitions of \cite{osserman2011lifting} which will be frequently used later. 
\begin{defn}\label{compatible fan}
Let $\mathcal P$ be a finite collection of polyhedra in $N_\mathbb R$ and let $\Delta$ be a pointed fan:

(1) The fan $\Delta$ is said to be \textit{compatible with}\footnote{Note that the compatibility is not symmetric. Fix a fan $\Delta$, we usually say ``$P$ is compatible with $\Delta$" but mean that ``$\Delta$ is compatible with $P$", this should be clear when $P$ is not necessarily a fan.} $\mathcal P$ provided that, for all $P\in\mathcal P$ and all cones $\sigma\in\Delta$, either $\sigma\subset \rho(P)$ or $\mathrm{relint}(\sigma)\cap \rho(P)=\emptyset$.

(2) The fan $\Delta$ is said to be a \textit{compactifying fan} for $\mathcal P$ provided that, for all $P\in\mathcal P$ the recession cone $\rho(P)$ is a union of cones in $\Delta$.
\end{defn}

\begin{defn}\label{thickening}
Let $P=\cap_{i=1}^r\{v\in N_\mathbb R| \langle v,u_i \rangle\leq a_i\}$, where $u_i\in M$ and $a_i\in G$, be an integral $G$-affine polyhedron in $N_\mathbb R$. A \textit{thickening} of $P$ is a polyhedron of the form 
$P'= \cap_{i=1}^r\{v\in N_\mathbb R| \langle v,u_i\rangle\leq a_i+\epsilon\}$
for some $\epsilon >0$ in $G$. If $\mathcal P$ is a finite collection of integral $G$-affine polyhedra, a \textit{thickening} of $\mathcal P$ is a collection of (integral $G$-affine) polyhedra of the form $\mathcal P'=\{P'|P\in \mathcal P\}$ where $P'$ denotes a thickening of $P$.
\end{defn}
Note that if $P'$ is a thickening of $P$ then $\rho(P)=\rho(P')$.

\begin{defn}\label{delta thickening}
Let $\Delta$ be an integral fan and let $\mathcal P$ be a finite union of integral $G$-affine polyhedra. A \textit{refinement} of $\mathcal P$ is a finite collection of integral $G$-affine polyhedra $\mathcal P'$ such that every polyhedron of $P'$ is contained in some polyhedron of $\mathcal P$, and every polyhedron of $\mathcal P$ is a union of polyhedra in $\mathcal P'$. A \textit{$\Delta$-decomposition} of $\mathcal P$ is a refinement $\mathcal P'$ of $\mathcal P$ such that $\rho(P)\in\Delta$ for all $P\in\mathcal P'$. A \textit{$\Delta$-thickening} of $\mathcal P$ is a thickening of a $\Delta$-decomposition of $\mathcal P$.
\end{defn}
Note that if $\mathcal P'$ is a $\Delta$-thickening of $\mathcal P$ then $\overline{|\mathcal P|}\subseteq \overline{|\mathcal P'|}^\circ$, where the closures are taken inside $N_\mathbb R(\Delta)$.

\begin{defn}\label{moving lemma}
Let $X$ and $X'$ be closed subschemes of $T_N$, let $C$ be a connected component of $\mathrm{trop}(X)\cap\mathrm{trop}(X')$. 

$\mathrm{(1)}$ A \textit{compacitifying datum} for $X,X'$ and $C$ consists of a pair $(\Delta, \mathcal P)$, where $\mathcal P$ is a finite collection of integral $G$-affine polyhedra in $N_\mathbb R$ such that $\mathrm{trop}(X)\cap\mathrm{trop}(X')\cap|\mathcal P|=|C|$ and $\Delta$ is an integral compactifying fan for $\mathcal P$ which is compatible with $\mathrm{trop}(X')\cap \mathcal P$.

$\mathrm{(2)}$ (\cite[Lemma 4.7]{osserman2011lifting}) If in addition we have $\mathrm{codim}(X)+\mathrm{codim}(X')=n$ then there exists a $\Delta$-thickening $\mathcal P'$ of $\mathcal P$, a number $\epsilon>0$ and a cocharacter $v\in N$ such that: 
$\mathrm{(a)}$ $(\Delta,\mathcal P')$ is a compactifying datum for $X,X'$ and $C$. 
$\mathrm{(b)}$ For all $r\in [-\epsilon,0)\cup(0,\epsilon]$ the set $(\mathrm{trop}(X)+r\cdot v)\cap\mathrm{trop}(X')\cap|\mathcal P'|$ is finite and contained in $|\mathcal P'|^\circ$, and each point lies in the interior of facets of $\mathrm{trop}(X)+r\cdot v$ and $\mathrm{trop}(X')$. We call the tuple $(\mathcal P',\epsilon,v)$ a \textit{tropical moving data}.
\end{defn}

\section{Tropical Intersection Theory}
Let $\Delta$ be a complete unimodular fan. In this section we define the stable intersection of two tropical cycles in the tropical variety $\mathscr X=N_\mathbb R(\Delta)$, where one of them is compatible with $\Delta$ (or equivalently $\Delta$ is a compactifying fan of one of them). We then 
prove that our definition of stable intersection is compatible with the intersection product defined in \cite{meyer2011intersection} (up to rational equivalence).

\subsection{Tropical Intersection Product} 
We first recall some concepts from \cite[\S 2]{meyer2011intersection} about intersection theory on $\mathscr X$. For a treatment of the intersection theory on $N_\mathbb R$ (i.e. $\Delta=\{0\}$) where the intersection product agrees with the stable intersection see \cite{allermann2010first} and \cite{allermann2014rational}.

\begin{defn}
Let $\mathscr X=N_\mathbb R(\Delta)$ be a tropical variety. A \textit{$k$-cycle} on $\mathscr X$ is a collection\footnote{In \cite{meyer2011intersection} a tropical cycle on $\mathscr X$ is defined as a collection of cycles $\alpha_\tau$ on each $O(\tau)$ without taking closures. Here we use the closures $\overline\alpha_\tau$ just to be consistent with our later argument. This will not change the intersection theory on $\mathscr X$.} $\overline\alpha=(\overline\alpha_\tau)_{\tau\in\Delta}$ where each $\overline\alpha_\tau$ is the closure of a (formal sum of) balanced weighted polyhedral complex $\alpha_\tau\subset O(\tau)$ of dimension $k$.

\end{defn}

\begin{defn}
A \textit{tropical rational function} on $N_\mathbb R$ is a continuous piecewise linear function $r\colon N_\mathbb R\rightarrow \mathbb R$ such that there is a finite cover $N_\mathbb R=\cup P_i$ with polyhedra with rational slopes such that $r$ is integral affine on each $P_i$. A \textit{tropical rational function} on a tropical variety $\mathscr X$ is a tropical rational function on its main torus. We denote $\mathrm{Rat}(\mathscr X)$ the set of tropical rational functions on $\mathscr X$.
\end{defn}

\begin{defn}
Let $r$ be a tropical rational function on $\mathscr X$ and $\tau\in\Delta$. We say $r$ \textit{restricts to} $O(\tau)$ if the assignment $z\rightarrow \lim\limits_{x\rightarrow z}r(x)$ defines a tropical rational function on $O(\tau)$, which we denote by $r^\tau(z)$. 
\end{defn}

\begin{defn}\label{cartier divisor}
Let $\overline \alpha$ be a $k$-cycle on $\mathscr X$. A \textit{Cartier divisor} on $\overline \alpha$ is a finite family $\varphi=(U_i,r_i)$ of pairs of open subsets $U_i$ of ${|\overline\alpha|}$ and tropical rational functions $r_i$ on $\mathscr X$ satisfying:

(1) The union of all $U_i$ covers ${|\overline\alpha|}$.
 
(2) For every component $\overline \alpha_\tau$ of $\overline \alpha$ with $\alpha_\tau\subset O(\tau)$ and every chart $U_i$ such that $U_i\cap |\overline \alpha_\tau|\neq\emptyset$ the function $r_i$ must restrict to $O(\tau)$. 

(3) For every component $\overline \alpha_\tau$ of $\overline \alpha$ with $\alpha_\tau\subset O(\tau)$ and all charts $U_i$ and $U_j$ such that $U_i\cap U_j \cap|\overline\alpha_\tau|\neq\emptyset$ there is an affine linear tropical rational function $d$ on $O(\tau)$ such that $r_i^\tau-r_j^\tau=d$ on $U_i\cap U_j\cap|\alpha_\tau|$ and $d$ extends to a continuous function on $U_i\cap U_j$.
\end{defn}

Note that a tropical rational function is a Cartier divisor on $\mathscr X$.

\begin{lem}\label{infinite cell}(\cite[Lemma 2.46]{meyer2011intersection}) Let $P$ be a rational polyhedron and $\tau\in\Delta$ such that $P'=\overline P\cap O(\tau)$ is non-empty and $\dim P=\dim P'+1$. Then there exists a unique primitive lattice vector $v_{P/P'}$ such that\footnote{In \cite[Lemma 2.46]{meyer2011intersection} the vector $v_{P/P'}$ is defined as the unique lattice vector such that $v_{P/P'}\in N\cap\rho(P)\cap\tau$. However, according to \cite[Definition 2.47]{meyer2011intersection} and \cite[Example 2.49]{meyer2011intersection} it is more natural to require $-v_{P/P'}\in N\cap\rho(P)\cap\tau$.} $-v_{P/P'}\in N\cap\rho(P)\cap\tau$. 
\end{lem}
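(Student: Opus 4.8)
The plan is to construct $v_{P/P'}$ explicitly from the linear-algebraic data of $P$ and $P'$, establish that it lies in the required set, and then argue uniqueness via a dimension count. First I would observe that since $P' = \overline{P} \cap O(\tau)$ is a nonempty face of the closure of $P$ lying in the boundary stratum $O(\tau) = N_\mathbb{R}/\mathbb{R}\tau$, the condition $\dim P = \dim P' + 1$ forces the recession cone $\rho(P)$ to interact with $\tau$ in a one-dimensional way. Concretely, I expect that $\rho(P) \cap \mathbb{R}\tau$ (equivalently the directions along which $P$ escapes to infinity into the stratum $O(\tau)$) is a ray, and that $N \cap \rho(P) \cap \tau$ is a rank-one submonoid of the lattice $N$. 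The unicity of a \emph{primitive} generator of a rank-one rational cone is then the key elementary fact: a rational ray contains exactly two primitive lattice vectors, one in each direction, so requiring $-v_{P/P'} \in N \cap \rho(P) \cap \tau$ (rather than $+v_{P/P'}$) pins down the sign and hence the vector uniquely.

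The key steps, in order, would be: (i) verify that $P'$ being a face of $\overline{P}$ in $O(\tau)$ together with the codimension-one hypothesis $\dim P = \dim P' + 1$ implies that the ``missing direction'' — the direction in which $P$ limits onto $P'$ as one travels to infinity — is governed by $\rho(P) \cap \tau$; (ii) show $\rho(P) \cap \tau$ is a one-dimensional rational cone inside $\tau$, using that $\tau \in \Delta$ is a rational (indeed unimodular) cone and $\rho(P)$ is rational since $P$ is; (iii) extract the primitive lattice generator of $N \cap \rho(P) \cap \tau$, and fix its sign by the normalization $-v_{P/P'} \in N \cap \rho(P) \cap \tau$. For step (i), the mechanism of the boundary gluing from Section 2.3 is what I would lean on: by property (2) of the extended tropicalization, a finite point $x \in P$ limits to $\pi_\tau(x) \in O(\tau)$ precisely along directions $v \in \mathrm{relint}(\tau)$, so $P'$ is recovered as the image $\pi_\tau(P)$ and the one-dimensional gap is exactly the fiber direction of $\pi_\tau$ restricted to $\rho(P)$, namely $\mathbb{R}\tau \cap \rho(P)$.

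The main obstacle, I expect, is step (i): making rigorous the claim that the codimension-one condition on closures in $N_\mathbb{R}(\Delta)$ translates into $\rho(P) \cap \tau$ being exactly one-dimensional. One must rule out the degenerate possibilities that $\rho(P) \cap \tau = \{0\}$ (which would make $\overline{P} \cap O(\tau)$ empty, contradicting the hypothesis) and that $\rho(P) \cap \tau$ has dimension $\geq 2$ (which would drop the dimension of $P'$ by more than one). This requires carefully relating the recession behavior of $P$ to the affine span of $P'$ in the quotient $N_\mathbb{R}/\mathbb{R}\tau$, and I would handle it by writing $P = P'_0 + (\rho(P) \cap \text{(something)})$ in suitable coordinates adapted to $\tau$, so that $\dim P - \dim P'$ reads off directly as $\dim(\rho(P) \cap \mathbb{R}\tau)$. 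Once the one-dimensionality is secured, steps (ii) and (iii) are routine, and the sign normalization in the footnote — choosing $-v_{P/P'}$ so that the convention matches the definition of the boundary map in \cite[Definition 2.47]{meyer2011intersection} — is the only remaining point to state carefully.
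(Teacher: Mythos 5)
First, a point of comparison: the paper does not actually prove this lemma --- it is quoted verbatim from \cite[Lemma 2.46]{meyer2011intersection}, with only a footnote adjusting the sign convention. So your proposal supplies an argument the paper omits, and its architecture is the standard (and correct) one: identify $P'$ with $\pi_\tau(P)$ via the boundary description of $N_\mathbb R(\Delta)$ (this is precisely \cite[Lemma 3.9]{osserman2011lifting}, which the paper itself invokes in the proof of Lemma \ref{projection polyhedral complex}), convert the codimension-one hypothesis into one-dimensionality of the escape directions, and then use rationality of $P$ and $\tau$ together with pointedness of $\tau$ to extract a unique primitive generator and fix its sign. Note that the nonemptiness of $P'$ gives not just $\rho(P)\cap\tau\neq\{0\}$ but the stronger statement $\rho(P)\cap\mathrm{relint}(\tau)\neq\emptyset$ (this is the content of the cited lemma; proving it from scratch requires a separation argument), and it is this stronger statement that justifies $P'=\pi_\tau(P)$ and hence the dimension count.

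Two bookkeeping slips should be fixed before this counts as a complete proof. (1) Your claim that $\dim P-\dim P'$ ``reads off directly as $\dim(\rho(P)\cap\mathbb R\tau)$'' is false as a general formula: the dimension drop under $\pi_\tau$ equals $\dim(L\cap\mathbb R\tau)$, where $L$ is the direction space of the affine hull of $P$, and this can strictly exceed $\dim(\rho(P)\cap\mathbb R\tau)$. For instance, take $P=\{(x,y,0)\mid x+y\geq 0,\ |x-y|\leq 1\}\subset\mathbb R^3$ and $\tau$ the cone spanned by $e_1,e_2$: then $\rho(P)\cap\mathbb R\tau$ is the ray through $(1,1,0)$, yet the drop is $2$. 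What you need is only the one-directional implication: nonemptiness produces $v\in\rho(P)\cap\mathrm{relint}(\tau)$, so $v\in L\cap\mathbb R\tau$; if the drop is $1$ then $L\cap\mathbb R\tau=\mathbb R v$, and since $\rho(P)\cap\tau\subseteq L\cap\mathbb R\tau$ is a cone containing $v$ and contained in the pointed cone $\tau$, it equals $\mathbb R_{\geq 0}v$. (2) Relatedly, $\rho(P)\cap\mathbb R\tau$ need not be a ray: if $P$ contains a line in the direction $v$ (e.g.\ $P=\mathbb R v$, which satisfies all the hypotheses), it is the whole line $\mathbb R v$. The object that is canonically a ray is $\rho(P)\cap\tau$, and it is the pointedness of $\tau$ --- not merely the sign convention --- that excludes the second primitive vector of the line; the convention in the footnote then just declares $v_{P/P'}$ to be \emph{minus} the primitive generator of that ray. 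With these corrections your steps (i)--(iii) go through.
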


We call $P'$ an infinite cell of $P$ in the case of Lemma \ref{infinite cell}.

\begin{defn}\label{intersect cartier divisor}
Let $\varphi$ be a Cartier divisor on a $(k+1)$-cycle $\overline\alpha_\tau$ with $\alpha_\tau\subset O(\tau)$ as defined in Definition \ref{cartier divisor}. We construct the \textit{intersection product} $\varphi\cdot \overline \alpha_\tau$ as follows: Choose a polyhedral structure on $\alpha_\tau$ such that $\varphi$ is linear on every cell of $\alpha_\tau$. For each cell $P$ choose rational functions $r_P$ in open charts $U_P$ containing $P$. Let $r'_P$ denote the linear part of the restriction of $r_P$ to $P$. We first get a component $\overline E_{\tau}$ with $E_\tau\subset O(\tau)$ whose cells are the codimension one cells $Q$ of $\alpha_\tau$ with weight 
$$w(Q)=\sum_{Q\subsetneq P\in\alpha_\tau}w(P)r'_P(v_{P/Q})-r'_Q(\sum_{Q\subsetneq P\in\alpha_\tau}w(P)v_{P/Q})$$
where $v_{P/Q}$ is a primitive lattice generator of $\mathrm{Star}_P(Q)$ and $w(P)$ is the weight of $P$ in $\alpha_\tau$. For each $\sigma\in\Delta$ with $\tau\subsetneq \sigma$ we get a component $\overline E_{\sigma}$ where $E_\sigma \subset O(\sigma)$ consists of infinite cells $P'=\overline P\cap O(\sigma)$ of $P$ for all $P\subset\alpha_\tau$ such that $\dim(\overline P\cap O(\sigma))=\dim(\alpha_\tau)-1$ with weight:
$$w(P')=w(P)[N(\sigma)_{P'}:N_P(\sigma)]r'_{P}(v_{P/P'}).$$
Here $N(\sigma)_{P'}=\pi_\sigma(N)\cap \mathrm{span}(P')$ and $N_P(\sigma)=\pi_\sigma(\pi_\tau(N)\cap \mathrm{span}(P))$.
Then $\varphi\cdot \overline \alpha_\tau$ is defined as $\varphi\cdot \overline \alpha_\tau=\sum_{\tau\subset\sigma}\overline E_{\sigma}$. This is well defined according to \cite[Theorem 2.48]{meyer2011intersection}. We extend this definition by linearality to the set of all $(k+1)$-cycles $\overline\alpha$ on $\mathscr X$.
\end{defn}

\begin{ex} \label{tropical inter ex}
Let $\Delta$ be the complete fan in $N_\mathbb R=\mathbb R^2$ whose facet $\sigma_i$ is the $i$-th quadrant for $i=1,2,3,4$. Let $\rho_x,\rho_x',\rho_y,\rho_y'$ be the rays of $\Delta$ as below (on the right). We have $\mathscr X=(\mathbb R\cup\{-\infty,+\infty\})^2$.
$$ \begin{tikzpicture}[
      scale=1, 
      virtual/.style={thick,densely dashed,},
      trans/.style={thick,},
rans/.style={thick,red},
      classical/.style={thin,double,<->,shorten >=4pt,shorten <=4pt,>=stealth}
    ]
 \draw[trans] (-3cm,0cm)--(3cm,0cm);
   \draw[trans] (0cm,-3cm)--(0cm,3cm);
 \draw[trans](0.3cm,0.3cm)node{$O$};
 \draw[trans](0cm,0cm)node{$\bullet$};
 \draw[virtual] (3.5cm,-3.5cm)--(3.5cm,3.5cm);
 \draw[virtual] (-3.5cm,-3.5cm)--(-3.5cm,3.5cm);
 \draw[virtual] (-3.5cm,3.5cm)--(3.5cm,3.5cm);
 \draw[virtual] (-3.5cm,-3.5cm)--(3.5cm,-3.5cm);
 \draw[trans](3.5cm,3.5cm)node{$\bullet$};
 \draw[trans](3.5cm,-3.5cm)node{$\bullet$};
 \draw[trans](-3.5cm,3.5cm)node{$\bullet$};
 \draw[trans](-3.5cm,-3.5cm)node{$\bullet$};
 \draw[trans](4cm,4cm)node{$(+\infty,+\infty)$};
 \draw[trans](-4cm,-4cm)node{$(-\infty,-\infty)$};
 \draw[trans](4cm,-4cm)node{$(+\infty,-\infty)$};
 \draw[trans](-4cm,4cm)node{$(-\infty,+\infty)$};
   \draw[trans] (3cm,-3cm)--(-3cm,3cm);
 \draw[trans](3cm,0.3cm)node{$x$};
 \draw[trans](0.3cm,3cm)node{$y$};
 \draw[trans](-1cm,1.5cm)node{$Q$};

 \draw[trans] (6cm,0cm)--(9cm,0cm); 
 \draw[trans] (7.5cm,-1.5cm)--(7.5cm,1.5cm);
 \draw[trans](8cm,0.5cm)node{$\sigma_1$};
 \draw[trans](7cm,0.5cm)node{$\sigma_2$};
 \draw[trans](7cm,-0.5cm)node{$\sigma_3$};
 \draw[trans](8cm,-0.5cm)node{$\sigma_4$};
 \draw[trans](9.5cm,0cm)node{$\rho_x$};
 \draw[trans](5.5cm,0cm)node{$\rho_{x}'$};
 \draw[trans](7.5cm,2cm)node{$\rho_y$};
 \draw[trans](7.5cm,-2cm)node{$\rho_y'$};
 \draw[trans](7.5cm,0cm)node{$\bullet$};

                 \end{tikzpicture}$$

Let $\varphi$ be the tropical rational function on $\mathscr X$ such that $\varphi(x,y)=0$ if $x+y\leq 0$ and $\varphi(x,y)=x+y$ if $x+y\geq 0$. We calculate $\varphi\cdot \mathscr X=\varphi\cdot\overline \alpha_\tau$ where $\alpha_\tau =N_\mathbb R$ using symbols in Definition \ref{intersect cartier divisor}. Take the polyhedral structure on $\alpha_\tau$ which contains two facets $P_+=\{(x,y)|x+y\geq 0\}$ and $P_-=\{(x,y)|x+y\leq 0\}$ and a codimension one cell $Q=\{(x,y)|x+y= 0\}$. We have $U_{P_+}=U_{P_-}=N_\mathbb R(\Delta)$ and $r_{P_+}=r_{P_-}=\varphi$, and $r_{P_+}'(x,y)=x+y$ and $r_{P_-}'(x,y)=0$. Restricting to $Q$ we have $r_{P_+}'=r_{P_-}'=r_Q'=0$.

We have $E_{\tau}=Q$. To calculate the weight of $Q$ note that $w(P_+)=w(P_-)=1$, and we can pick $v_{P_+/Q}=(0,1)$ and $v_{P_-/Q}=(0,-1)$. It follows that $w(Q)=r_{P_+}'(v_{P_+/Q})+r_{P_-}'(v_{P_-/Q})=1$.

There are four infinite cells to consider: $P_x=O(\rho_x)$ and $P_y=O(\rho_y)$ are contained in the closure of $P_+$ while $P_x'=O(\rho_x')$ and $P_y'=O(\rho_{y}')$ are contained in the closure of $P_-$. In order to calculate $w(P_x)$ and $w(P_y)$, we first take $v_{P_+/P_x}=(-1,0)$ and $v_{P_+/P_y}=(0,-1)$. One also checks that $N(\rho_x)_{P_x}=\pi_{\rho_x}(N)$ and $N_{P_+}(\rho_x)=\pi_{\rho_x}(N\cap \mathrm{span}(P_+))=\pi_{\rho_x}(N)=N(\rho_x)_{P_x}$, and similarly $N_{P_+}(\rho_y)=N(\rho_y)_{P_y}$. Thus we have $w(P_x)=r'_{P_+}((-1,0))=-1$ and $w(P_y)=r'_{P_+}((0,-1))=-1$. Similarly, taking $v_{P_-/P_x'}=(1,0)$ and $v_{P_-/P_y'}=(0,1)$, we have $w(P_x')=w(P_y')=0$.

It follows that $E_{\rho_x}=-O(\rho_x)$ and $E_{\rho_y}=-O(\rho_y)$ and $E_{\rho_x'}=0$ and $E_{\rho_y'}=0$. Hence $\varphi\cdot \mathscr X=\overline Q-V(\rho_x)-V(\rho_y).$

\end{ex}

\begin{rem}\label{intersection in rn} 
In  \cite{allermann2010first} a similar notion of the intersection product between a tropcical rational function on $O(\tau)$ (up to translation by a linear function on $O(\tau)$) and a cycle in $O(\tau)$ is defined, which we denote by ``$\cdot_\tau$". In Definition \ref{intersect cartier divisor}, if $\varphi$ is a Cartier divisor on $V(\tau)$, then it induces a tropical rational function $\varphi_\tau$ on $O(\tau)$ (again up to translation by a linear function). We then have $E_{\tau}=\varphi_\tau\cdot_\tau \alpha_\tau$, which is also equal to the stable intersection of $\varphi_\tau\cdot_\tau O(\tau)$ and $\alpha_\tau$ inside $O(\tau)$. 
\end{rem}

\begin{defn}
Let $\mathscr X=N_\mathbb R(\Delta)$ be a tropical variety. Let $Z_k(\mathscr X)$ be the group of $k$-cycles on $\mathscr X$. We define subgroups:
$$R_k(\mathscr X)=\mathrm{span}_\mathbb Z\{r\cdot C| r\in\mathrm{Rat}(\mathscr X), C\in Z_{k+1}(\mathscr X)\}$$
$$R_k'(\mathscr X)=\mathrm{span}_\mathbb Z\{f_*(C)|f\colon \mathscr Y\rightarrow \mathscr X \mathrm{\ toric\ morphism,\ }C\in R_k(\mathscr Y)\}.$$
Then the \textit{$k$-th Chow group} of $\mathscr X$ is $A_k(\mathscr X)=Z_k(\mathscr X)/R_k'(\mathscr X)$. A tropical $k$-cycle $\overline \alpha$ is rationally equivalent to $\overline \beta $, denoted by $[\overline \alpha]=[\overline \beta]$, if $\overline \alpha-\overline \beta\in R_k'(\mathscr X)$.
\end{defn}
Here $f_*$ is the pushforward of tropical cycles, see \cite[Definition 2.51]{meyer2011intersection} for its definition. According to \cite[Remark 2.68]{meyer2011intersection} every cycle on $\mathscr X$ is rationally equivalent to a sum of products of Cartier divisors (with $\mathscr X$). This defines an intersection product:
$$``\ast"\colon A_{n-k}(\mathscr X)\times A_{n-l}(\mathscr X)\rightarrow A_{n-k-l}(\mathscr X)$$
which makes $A_*(\mathscr X)$ a graded ring. More precisely, if $[\overline \alpha]=[\varphi_1\cdots\varphi_k\cdot \mathscr X] \in A_{n-k}(\mathscr X)$ and $[\overline \beta]\in A_{n-l}(\mathscr X)$, then $[\overline \alpha]*[\overline\beta]=[\varphi_1\cdots\varphi_k\cdot \overline \beta]\in A_{n-k-l}(\mathscr X)$.

Note that this intersection product of two cycles $\overline\alpha$ and $\overline \beta$ with $\alpha,\beta\subset N_\mathbb R$ does not necessarily coincide with the closure of the stable intersection $\alpha\cdot\beta$, except that $\Delta$ is compatible with one of $\alpha$ and $\beta$ (Lemma\ref{intersection compatible}). The main reason is that, unlike stable intersection, passing to the product with a cartier divisor may create cycles supported on the boundary. See the example below.

\begin{ex}
Consider the same $\Delta$ and same figure as in Example \ref{tropical inter ex}. Let $\alpha=Q\subset N_\mathbb R$ be the line defined by $x+y=0$ with multiplicity one. We have $[\overline \alpha]=[V(\rho_x)]+[V(\rho_y)]$ by loc.cit.. Obviously the closure of the stable intersection of $\alpha$ and itself is empty. We now calculate $[\overline \alpha] *[\overline \alpha]$. Consider the Cartier divisor $\varphi=(U_i,r_i)_{1\leq i\leq 4}$ where $U_i=\cup_{\tau\subset\sigma_i}O(\tau)=N_\mathbb R(\sigma_i)$. Let $r_1(x,y)=-x-y$ and $r_2(x,y)=-y$ and $r_3(x,y)=0$ and $r_4(x,y)=-x$. Straightforward calculation shows that $\varphi\cdot \mathscr X= V(\rho_x)+V(\rho_y)$, hence $[\overline \alpha] *[\overline\alpha]=[\varphi\cdot\overline  \alpha]=[(-\infty,+\infty)+(+\infty,-\infty)].$

\end{ex}



In addition, every cycle on $\mathscr X$ is rationally equivalent to a formal sum of closed $N_\mathbb R$-orbits (\cite[Theorem 2.59]{meyer2011intersection}) as well as a formal sum of closures of subfans of $\Delta$ (\cite[Lemma 2.66]{meyer2011intersection}). As a result we have:

\begin{thm}(\cite[Corollary 2.67]{meyer2011intersection})\label{chow group}
We have group isomorphisms:
$$A_*(X(\Delta))\xrightarrow{\varphi}MW_*(\Delta)\xrightarrow{\phi}A_*(N_\mathbb R(\Delta)).$$ 
 
\end{thm}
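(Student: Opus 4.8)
The plan is to establish the two arrows separately and then tie them together through a compatibility of intersection pairings. Throughout $\Delta$ is complete and unimodular, so $X(\Delta)$ is smooth and proper.

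For the first arrow $\varphi\colon A_*(X(\Delta))\to MW_*(\Delta)$ I would use the classical Fulton--Sturmfels correspondence. For a class $\alpha\in A_k(X(\Delta))$ define the weight
$$\varphi(\alpha)(\sigma)=\int_{X(\Delta)}\alpha\cdot[\mathcal V(\sigma)],\qquad \sigma\in\Delta(k),$$
which makes sense because smoothness of $X(\Delta)$ gives a well-defined product and completeness makes the degree of the resulting zero-cycle defined. The content of the Fulton--Sturmfels theorem is that this function satisfies the balancing condition defining $MW_k(\Delta)$ and that the induced map on Chow cohomology $A^{n-k}(X(\Delta))\to MW_k(\Delta)$ is an isomorphism. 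To land in the homology groups $A_k$ displayed in the statement I would then invoke Poincar\'e duality for the smooth complete variety $X(\Delta)$ (see \cite{fulton1998intersection}), identifying $A_k(X(\Delta))\cong A^{n-k}(X(\Delta))\cong MW_k(\Delta)$.

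For the second arrow $\phi\colon MW_*(\Delta)\to A_*(N_\mathbb R(\Delta))$, note that a dimension-$k$ Minkowski weight is exactly a balanced weighted $k$-dimensional subfan of $\Delta$, hence a tropical $k$-cycle supported on $\Delta\subseteq N_\mathbb R$; including it into $N_\mathbb R(\Delta)$ and taking its rational equivalence class defines $\phi$. Surjectivity is then immediate from \cite[Lemma 2.66]{meyer2011intersection}: every class in $A_k(N_\mathbb R(\Delta))$ is represented by a formal sum of subfans of $\Delta$, and any representative that is an honest tropical cycle is balanced and so lies in the image of $\phi$.

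The remaining and, I expect, principal difficulty is injectivity of $\phi$. I would prove it via a non-degeneracy argument. The degree of the intersection product gives pairings $A_k\times A_{n-k}\to\mathbb Z$ on each side, and on the toric side the pairing $(\alpha,\beta)\mapsto\int_{X(\Delta)}\alpha\cdot\beta$ is perfect by Poincar\'e duality. The key lemma to establish is the compatibility
$$\deg\bigl(\phi(c)\ast\phi(c')\bigr)=\int_{X(\Delta)}\varphi^{-1}(c)\cdot\varphi^{-1}(c')$$
for complementary-dimensional Minkowski weights $c,c'$. Granting this, a weight $c$ with $\phi(c)=0$ pairs trivially with all of $A_{n-k}(N_\mathbb R(\Delta))$—since the degree of a zero-cycle is a rational-equivalence invariant—so $\varphi^{-1}(c)$ pairs trivially with everything on the toric side and hence $c=0$. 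The main obstacle is exactly this compatibility identity, which demands matching the tropical product $\ast$ of \cite{meyer2011intersection} with the classical one. I would attack it by reducing, via \cite[Theorem 2.59]{meyer2011intersection} and \cite[Lemma 2.66]{meyer2011intersection} together with the fact that every cycle is rationally equivalent to a sum of products of Cartier divisors, to the case of an intersection with a single tropical Cartier divisor; there both the tropical weight formula defining $\varphi\cdot\alpha$ and the toric intersection with the corresponding $T_N$-invariant divisor (Proposition \ref{toric divisor}) are computed by the same explicit lattice-index expression, so the two sides agree cone by cone.
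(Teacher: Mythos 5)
First, a structural remark: the paper does not prove this statement at all --- it is quoted as \cite[Corollary 2.67]{meyer2011intersection}, with $\varphi$ taken from \cite{fulton1994intersection} and the surrounding sentences (Meyer's Theorem 2.59 and Lemma 2.66) indicating why it holds. So your proposal must be judged on its own correctness rather than against an internal argument. Your first two steps are sound and standard: $\varphi$ is the Fulton--Sturmfels isomorphism $A^{n-k}(X(\Delta))\cong MW_k(\Delta)$ composed with Poincar\'e duality on the smooth complete variety $X(\Delta)$, and surjectivity of $\phi$ is exactly \cite[Lemma 2.66]{meyer2011intersection}, since a formal sum of subfans of $\Delta$ representing a tropical cycle class is balanced, i.e.\ is a Minkowski weight. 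The duality scheme for injectivity (perfectness of the intersection pairing on the toric side, transported through a compatibility of pairings) is also a legitimate strategy.

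The genuine gap is in your proof sketch of the key compatibility $\deg(\phi(c)\ast\phi(c'))=\int_{X(\Delta)}\varphi^{-1}(c)\cdot\varphi^{-1}(c')$. You reduce via \cite[Remark 2.68]{meyer2011intersection} to a single tropical Cartier divisor and then compare with ``the corresponding $T_N$-invariant divisor.'' But an arbitrary tropical rational function produced by that reduction is only piecewise linear with respect to some polyhedral cover of $N_\mathbb R$; unless it is a support function on $\Delta$ (integral, linear on each cone of $\Delta$), there is \emph{no} corresponding $T_N$-invariant divisor, so the cone-by-cone comparison you describe has nothing on the toric side to compare with. The fix is to run the generation argument on the toric side instead: by Jurkiewicz--Danilov, $A^*(X(\Delta))$ is generated by the boundary divisor classes $[D_\rho]$, each of which is Cartier by smoothness and hence given by a $\Delta$-support function $f_\rho$; one then has to show that Meyer's product of such support functions with a fan cycle represents, in $A_*(N_\mathbb R(\Delta))$, the class of the Fulton--Sturmfels Minkowski-weight product. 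Even this is not a literal ``cone by cone'' agreement of weights, because Meyer's product of a support function with a fan cycle has components at infinity (the cells $E_{\tau,\sigma}$), so the comparison is an equality of rational equivalence classes and requires identities such as $f\cdot N_\mathbb R=F-\sum_\rho a_\rho O(\rho)$ and $[F]=\sum_\rho a_\rho[O(\rho)]$. This is precisely the content of the paper's later Lemma \ref{intersection compatible} and Lemma \ref{chow ring} --- which are proved \emph{using} Theorem \ref{chow group}, so you cannot borrow them without circularity; your route requires redoing those computations independently, and as written that work is missing.
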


Here $MW_*(\Delta)$ is the ring of Minkowski weights on $\Delta$, see \cite{fulton1994intersection} for the construction of $\varphi$. See also \cite[\S 2]{osserman2013lifting} for an illustration of $\phi$. Note that $\phi$ is actually a ring isomorphism (Lemma \ref{intersection compatible}) and that the composition $\phi\circ\varphi$ is indeed (at least when $X(\Delta)$ is projective) induced by the tropicalization map, see Lemma \ref{chow ring}.

\subsection{Compactified stable intersection with certain tropical cycles}
In general it is not obvious to define the stable intersection for two arbitrary cycles in $\mathscr X$. A basic issue is that taking limits of perturbed intersections does not work in the compactified case. For example any translation of a line in $\mathbb T\mathbb P^3$, the tropical projective space of dimension 3, which passes through $(+\infty,+\infty,+\infty)$  still passes through the same point,
but we expect them to have empty intersection. 

However,  we are able to define the compactified stable intersection  for a restricted class of cycles, namely cycles that are compatible\footnote{If $\Delta$ is not complete we may restrict to cycles of which $\Delta$ is a compactifying fan.} with $\Delta$, hence have the same codimension in every affine strata $O(\tau)\subset \mathscr X$. Before we state the definition some balancing and compatibility conditions need to be checked:
\begin{lem}\label{projection polyhedral complex}
Let $\Sigma$ be a polyhedral complex in $\mathbb R^n$ which is compatible with $\Delta$ and let $\tau\in\Delta$. Then one can choose polyhedral structures on $\Sigma$ and $\overline \Sigma\cap O(\tau)$ such that there is a one-to-one correspondence induced by $\pi_\tau$ between faces of $\Sigma$ whose recession cones contain $\tau$ and faces of $\overline \Sigma\cap O(\tau)$.
\end{lem}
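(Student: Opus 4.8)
The plan is to keep the given ($\Delta$-compatible) polyhedral structure on $\Sigma$ and to equip $\overline\Sigma\cap O(\tau)$ with the \emph{image structure}, whose cells are the polyhedra $\pi_\tau(F)$ attached to those faces $F$ of $\Sigma$ with $\tau\subseteq\rho(F)$; the candidate correspondence is then $F\mapsto\pi_\tau(F)$. First I would treat a single polyhedron $P$ with $\tau\subseteq\rho(P)$. Using the convergence description of $N_\mathbb R(\Delta)$ recalled in Section 2.3 (a point $x+\lambda v$ tends to $\pi_\tau(x)\in O(\tau)$ exactly when $v\in\mathrm{relint}(\tau)$), together with the fact that the image of a polyhedron under the linear projection $\pi_\tau$ is again a closed polyhedron, one checks $\overline P\cap O(\tau)=\pi_\tau(P)$, of dimension $\dim P-\dim\tau$ since $\mathrm{span}(\tau)$ lies in the linear span of $P$. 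The faces of $P$ with recession cone containing $\tau$ are exactly those cut out by support functionals vanishing on $\tau$, and for such a functional the associated linear form descends to $O(\tau)$; this identifies $\{F\preceq P:\tau\subseteq\rho(F)\}$ with the face lattice of $\pi_\tau(P)$ via $\pi_\tau$, with inverse $G\mapsto P\cap\pi_\tau^{-1}(G)$.

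Next I would glue. The two facts that make this work are, for cells $P,P'$ of $\Sigma$ with $\tau\subseteq\rho(P)\cap\rho(P')$: (i) if $P\cap P'\neq\emptyset$ then $\tau\subseteq\rho(P\cap P')$, because any $f\in P\cap P'$ satisfies $f+\tau\subseteq P\cap P'$; and (ii) $\pi_\tau(P)\cap\pi_\tau(P')=\pi_\tau(P\cap P')$, which follows by writing a common-image pair $p,p'$ as $p-p'\in\mathrm{span}(\tau)=\tau-\tau$ and pushing both into $P\cap P'$ along $\tau$. Since $\Delta$ is compatible with $\Sigma$ (Definition~\ref{compatible fan}), each cell $F$ satisfies either $\tau\subseteq\rho(F)$ or $\mathrm{relint}(\tau)\cap\rho(F)=\emptyset$, so by the single-polyhedron step each $F$ contributes either the full cell $\pi_\tau(F)$ or nothing to $O(\tau)$; hence $\overline\Sigma\cap O(\tau)=\bigcup_{\tau\subseteq\rho(F)}\pi_\tau(F)$. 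Combining (i), (ii) and the single-polyhedron bijection shows these images meet in common faces and are closed under passing to faces, so they indeed form a polyhedral complex with the correct support. I would also note that compatibility passes from the maximal cells to all faces, since $\rho(F)$ is an exposed face of $\rho(P)$, so the hypothesis may be applied to every cell.

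Finally, surjectivity of $F\mapsto\pi_\tau(F)$ is built into the definition of the image structure, and injectivity follows formally: if $\pi_\tau(F_1)=\pi_\tau(F_2)$ with $\tau\subseteq\rho(F_i)$, then by (ii) this common value equals $\pi_\tau(F_1\cap F_2)$, and applying the single-polyhedron bijection inside $F_1$ (where $F_1$ and the face $F_1\cap F_2$ have the same image) gives $F_1=F_1\cap F_2$, and symmetrically $F_2=F_1\cap F_2$, whence $F_1=F_2$. I expect the main obstacle to be exactly the gluing step, i.e.\ verifying that the images $\pi_\tau(F)$ fit together as a genuine polyhedral complex rather than overlapping spuriously; the resolution is observation (i), which says that the recession direction $\tau$ cannot be lost when passing to an intersection of two cells that both recede in the direction of $\tau$, and it is here that the compatibility of $\Delta$ with $\Sigma$ is essential.
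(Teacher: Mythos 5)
Your proof is correct, and its core is the same as the paper's: establish the face bijection for a single polyhedron $P$ with $\tau\subseteq\rho(P)$, then glue using the two facts that the direction $\tau$ survives intersection of cells and that $\pi_\tau(F_1)\cap\pi_\tau(F_2)=\pi_\tau(F_1\cap F_2)$ when both recession cones contain $\tau$ (the paper asserts this last identity with a one-line justification; you actually prove it, by pushing a common-image pair into $F_1\cap F_2$ along $\tau$). The one real difference is that the paper first refines $\Sigma$ so that $\rho(\sigma)\in\Delta$ for every cell $\sigma$, and then invokes \cite[Lemma 3.9]{osserman2011lifting} to write $\overline\Sigma\cap O(\tau)=\bigcup_{\tau\subset\rho(\sigma)}\pi_\tau(\sigma)$; you keep the given structure and argue that compatibility alone (which, as you note, passes to faces) suffices. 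Your variant is legitimate, since nothing in the bijection or gluing steps uses $\rho(F)\in\Delta$, and it buys a slightly stronger statement in which $\Sigma$ need not be refined; the paper's refinement buys a direct citation of the closure description and matches the normal form it reuses later, e.g.\ in Proposition~\ref{associativity of compactified stable intersection}. One small gap to flag: your single-polyhedron step only treats cells with $\tau\subseteq\rho(F)$, so the claim that every other cell ``contributes nothing to $O(\tau)$'' (i.e.\ $\overline F\cap O(\tau)=\emptyset$ when $\mathrm{relint}(\tau)\cap\rho(F)=\emptyset$) is not actually delivered by that step; likewise the inclusion $\overline F\cap O(\tau)\subseteq\pi_\tau(F)$ needs more than the ray-convergence description, since a sequence in $F$ converging into $O(\tau)$ need not move along a single ray. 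Both statements are exactly the content of \cite[Lemma 3.9]{osserman2011lifting}, which the paper cites; either cite it as well, or add the short argument via the decomposition $F=Q+\rho(F)$ with $Q$ compact, which proves both at once.
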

\begin{proof}
First we take a refinement of $\Sigma$ such that for any $\sigma\in\Sigma$ we have $\rho(\sigma)\in\Delta$. This is possible since $\Delta$ is compatible with $\Sigma$, and we can refine the structure on $\Sigma$ such that $\sigma\cap\delta$ is a face of $\Sigma$ for all $\sigma\in\Sigma$ and $\delta\in \Delta$. It follows from Lemma 3.9 of \cite{osserman2011lifting} that $$\overline\Sigma\cap O(\tau)=\displaystyle\bigcup_{\sigma\in\Sigma,\tau\subset\rho(\sigma)} \pi_\tau(\sigma).$$

Take $\sigma\in\Sigma$ a face such that $\tau\subset\rho(\sigma)$, then we have $\sigma=\displaystyle\bigcap_{u_i\in I}\{ x\in\mathbb R^n|\langle  x,u_i\rangle\geq b_i\}$ where $I\subset \tau^\vee$ is a finite set.
It follows that $\sigma+\mathbb R\tau=\displaystyle\bigcap_{u_i\in I\cap\tau^{\perp}}\{ x\in\mathbb R^n|\langle  x,u_i\rangle\geq b_i\}$ and $$\pi_\tau(\sigma)=\pi_\tau(\sigma+\mathbb R\tau)=\displaystyle\bigcap_{u_i\in I\cap\tau^\perp}\{ y\in\mathbb R^n/\mathbb R\tau|\langle  y,u_i\rangle\geq b_i\}$$
This is a polyhedron in $\mathbb R^n/\mathbb R\tau$ and will be considered as a face of $\pi_\tau(\Sigma)$. 
We need to show that $\pi_\tau(\sigma)$ are compatible for all $\sigma\in \Sigma$ such that $\tau\subset\rho(\sigma)$.

Note that faces of $\pi_\tau(\sigma)$ are of the form $$F_J=\displaystyle\bigcap_{u_i\in (I\cap\tau^\perp)\backslash J}\{ y\in\mathbb R^n/\mathbb R\tau|\langle  y,u_i\rangle\geq b_i\}\bigcap\displaystyle\bigcap_{u_i\in  J}\{ y\in\mathbb R^n/\mathbb R\tau|\langle  y,u_i\rangle= b_i\}$$
where $J$ is a subset of $I\cap\tau^\perp$. The preimage of $F_J$ in $\sigma$ is 
$$E_J=\displaystyle\bigcap_{u_i\in I\backslash J}\{ x\in\mathbb R^n|\langle  x,u_i\rangle\geq b_i\}\bigcap\displaystyle\bigcap_{u_i\in  J}\{ x\in\mathbb R^n|\langle  x,u_i\rangle= b_i\}.$$
This is an one-to-one correspondence from faces of $\pi_\tau(\sigma)$ to faces of $\sigma$ whose recession cone contains $\tau$.

To complete the proof notice that if $\sigma_1,\sigma_2\in \Sigma$ are two faces then $\pi_\tau(\sigma_1)\cap\pi_\tau(\sigma_2)=\pi(\sigma_1\cap\sigma_2)$ since their recession cones all contain $\tau$. It follows that $\tau\subset \rho(\sigma_1)\cap\rho(\sigma_2)=\rho(\sigma_1\cap\sigma_2)$ and hence $\pi_\tau(\sigma_1\cap\sigma_2)$ is a face of both $\pi_\tau(\sigma_1)$ and $\pi_\tau(\sigma_2)$ due to the argument above.
\end{proof}

Note that the correspondence in Lemma \ref{projection polyhedral complex} preserves codimension of every face, in particular we get an induced weight for every maximal face of $\overline \Sigma\cap O(\tau)$. The following proposition follows directly:

\begin{prop}\label{boundary tropical cycle}
Let $\Sigma\subset \mathbb R^n$ be a tropical cycle which is compatible with $\Delta$, let $\tau\in\Delta$ be a  face. Then $\overline \Sigma\cap O(\tau)$ is a tropical cycle in $O(\tau)$ with the induced structure from $\Sigma$.
\end{prop}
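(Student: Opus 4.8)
The plan is to take the polyhedral structure and the induced weights on $\overline\Sigma\cap O(\tau)$ supplied by Lemma~\ref{projection polyhedral complex} (and the remark following it) as given, so that the only thing left to verify is that this weighted complex is \emph{balanced} around each of its codimension-one faces; purity of dimension is immediate since the correspondence preserves codimension and every face of $\Sigma$ with recession cone containing $\tau$ is contained in a maximal such face. Let $Q'$ be a codimension-one face of $\overline\Sigma\cap O(\tau)$. By Lemma~\ref{projection polyhedral complex} it is the image $\pi_\tau(Q)$ of a codimension-one face $Q$ of $\Sigma$ with $\tau\subset\rho(Q)$, and the maximal faces of $\overline\Sigma\cap O(\tau)$ containing $Q'$ are exactly the images $\pi_\tau(P)$ of the maximal faces $P$ of $\Sigma$ containing $Q$. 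The first observation I would record is that \emph{every} maximal face $P\supsetneq Q$ automatically satisfies $\tau\subset\rho(P)$: indeed $Q\subset P$ forces $\rho(Q)\subset\rho(P)$, and $\tau\subset\rho(Q)$, so no maximal face adjacent to $Q$ is lost under projection. Thus the balancing relation of $\Sigma$ at $Q$ involves precisely the faces whose images meet at $Q'$.

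Next I would pin down the ambient quotients in which the two balancing conditions live. Because $\tau\subset\rho(Q)$ we have $\mathbb R\tau\subset\mathrm{span}(Q)$, so $\pi_\tau$ induces a canonical isomorphism $O(\tau)/\mathrm{span}(Q')\cong N_\mathbb R/\mathrm{span}(Q)$. Under this identification I would check that the primitive generators match, that is $\pi_\tau(v_{P/Q})=v_{\pi_\tau(P)/Q'}$, where $v_{P/Q}$ is the primitive lattice generator of $\mathrm{Star}_P(Q)$ pointing into $P$. This is where unimodularity of $\Delta$ enters: since $\mathbb R\tau\subset\mathrm{span}(Q)\subset\mathrm{span}(P)$ and $\tau$ is spanned by part of a lattice basis, one has $\pi_\tau(N)\cap\mathrm{span}(\pi_\tau(P))=\pi_\tau(N\cap\mathrm{span}(P))$, and the rank-one lattice image of $N\cap\mathrm{span}(P)$ in $\mathrm{span}(P)/\mathrm{span}(Q)$ is carried isomorphically onto that of $\pi_\tau(N)\cap\mathrm{span}(\pi_\tau(P))$ in $\mathrm{span}(\pi_\tau(P))/\mathrm{span}(Q')$. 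Hence no lattice index is introduced, and since $\pi_\tau(P)$ lies on the same side of $Q'$ that $P$ does of $Q$, both the primitivity and the orientation of the generator are preserved.

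With these two points in hand the balancing is immediate: since the weight on $\pi_\tau(P)$ equals $w(P)$ by construction,
$$\sum_{\pi_\tau(P)\supsetneq Q'} w(\pi_\tau(P))\,v_{\pi_\tau(P)/Q'}=\pi_\tau\Big(\sum_{P\supsetneq Q} w(P)\,v_{P/Q}\Big),$$
and the inner sum lies in $\mathrm{span}(Q)$ by the balancing of $\Sigma$ at $Q$, so its image lies in $\pi_\tau(\mathrm{span}(Q))=\mathrm{span}(Q')$. This is exactly the balancing condition for $\overline\Sigma\cap O(\tau)$ at $Q'$, and ranging over all codimension-one faces $Q'$ completes the argument. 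The main obstacle I anticipate is the lattice-theoretic verification that $\pi_\tau(v_{P/Q})$ remains primitive with the correct orientation; once that index-freeness is established via unimodularity, the rest is bookkeeping already encoded in Lemma~\ref{projection polyhedral complex}.
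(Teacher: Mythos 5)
Your proof is correct and takes essentially the same route as the paper, which simply asserts that the proposition ``follows directly'' from the codimension-preserving correspondence of Lemma \ref{projection polyhedral complex} and the induced weights; your write-up just supplies the balancing verification (adjacent facets survive projection since $\rho(Q)\subseteq\rho(P)$, and $\pi_\tau$ identifies the relevant lattice quotients so primitive normal vectors map to primitive normal vectors) that the paper leaves implicit. One minor remark: unimodularity of $\Delta$ is not actually needed for the identity $\pi_\tau(N)\cap\mathrm{span}(\pi_\tau(P))=\pi_\tau(N\cap\mathrm{span}(P))$, since $\mathbb R\tau\subseteq\mathrm{span}(Q)\subseteq\mathrm{span}(P)$ and $N\cap\mathbb R\tau$ is automatically saturated in $N$, so this hypothesis is harmless but superfluous at that step.
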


We are now able to define the compactified stable intersections in this special situation:
\begin{defn}\label{compactified stable intersection}
Let $\Sigma$ be a tropical cycle in $\mathbb R^n$ such that $\Delta$ is compatible with $\Sigma$. Let $\gamma\subset O(\tau)$ for some $\tau\in\Delta$. We define the \textit{compactified stable intersection} of $ \overline\Sigma$ and $\overline\gamma$ and denote by $\overline \gamma\cdot_c \overline  \Sigma$ to be the closure of the stable intersection of $\overline\Sigma\cap O(\tau)$ and $\gamma$ as tropical cycles on $O(\tau)$.
\end{defn}

Intuitively the stable intersection is a right action of the multiplicative group of tropical cycles compatible with $\Delta$ on the group of usual tropical cycles. Next we check the associativity of this action.

\begin{prop}\label{associativity of compactified stable intersection}
Let $\Sigma_1,\Sigma_2$ be tropical cycles in $\mathbb R^n$ compatible with $\Delta$ and of codimensions $l$ and $m$ respectively. Let $\tau\in\Delta$ be a face of dimension $d$. Then we have 
$$\overline{\Sigma_1\cdot\Sigma_2}\cap O(\tau)=(\overline \Sigma_1\cap O(\tau))\cdot (\overline\Sigma_2\cap O(\tau))$$ as tropical cycles in $O(\tau)$. In particular for any tropical cycle $\overline\gamma\subset \mathscr X$ we have the asociativity:
$$(\overline \gamma\cdot_c\overline \Sigma_1)\cdot_c\overline\Sigma_2=\overline\gamma\cdot_c(\overline\Sigma_1\cdot_c\overline\Sigma_2).$$
 
\end{prop}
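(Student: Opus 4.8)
The plan is to reduce the first identity to a weight computation on maximal faces and then to observe that the stable-intersection weight at a face is insensitive to the directions lying in its lineality space, in particular to $\mathbb R\tau$. Both sides are tropical cycles in $O(\tau)$ of the same dimension $n-l-m-d$, so it suffices to check that they assign the same weight to every $(n-l-m-d)$-dimensional polyhedron $\bar\gamma\subset O(\tau)$. First I would fix compatible polyhedral structures, refining $\Sigma_1$, $\Sigma_2$ and $\Sigma_1\cdot\Sigma_2$ so that each is compatible with $\Delta$ and $\sigma\cap\delta$ is a face for all $\delta\in\Delta$. By Lemma \ref{projection polyhedral complex} and Proposition \ref{boundary tropical cycle}, $\pi_\tau$ then gives a weight-preserving bijection between faces of $\Sigma_i$ whose recession cone contains $\tau$ and faces of $\overline\Sigma_i\cap O(\tau)$, and likewise between faces $\gamma$ of $\Sigma_1\cdot\Sigma_2$ with $\tau\subset\rho(\gamma)$ and faces of $\overline{\Sigma_1\cdot\Sigma_2}\cap O(\tau)$. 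Thus the left-hand weight at $\bar\gamma=\pi_\tau(\gamma)$ equals the weight $w_{\Sigma_1\cdot\Sigma_2}(\gamma)$, and the task becomes matching this with the stable-intersection weight at $\bar\gamma$ computed inside $O(\tau)$.

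Both weights I would compute by the fan-displacement rule for stable intersection (cf. \cite{allermann2010first}): the weight of a transverse maximal face is $\sum w(\sigma_1)w(\sigma_2)\,[N:N_{\sigma_1}+N_{\sigma_2}]$, summed over pairs of maximal faces $\sigma_1,\sigma_2$ containing the face in question, whose spans fill the ambient space, and which are selected by a generic displacement vector. The key structural point is that every maximal face $\sigma_i$ contributing to $\gamma$ satisfies $\tau\subset\rho(\gamma)\subset\rho(\sigma_i)$, since recession cones are monotone under inclusion; consequently $\mathbb R\tau$ lies in the linear span of each such $\sigma_i$, so $N\cap\mathbb R\tau\subset N_{\sigma_1}\cap N_{\sigma_2}$. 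From this I would verify the three ingredients of the match: (i) the lattice index is preserved, $[N:N_{\sigma_1}+N_{\sigma_2}]=[\pi_\tau(N):\pi_\tau(N_{\sigma_1})+\pi_\tau(N_{\sigma_2})]$, because $N\cap\mathbb R\tau\subset N_{\sigma_1}+N_{\sigma_2}$ and $\pi_\tau(N)=N/(N\cap\mathbb R\tau)$, so the index descends verbatim to the quotient; (ii) transversality is preserved, $\mathrm{span}(\sigma_1)+\mathrm{span}(\sigma_2)=\mathbb R^n$ iff their images fill $\mathbb R^n/\mathbb R\tau$, again using $\mathbb R\tau\subset\mathrm{span}(\sigma_1)$; and (iii) the displacement condition matches: the condition selecting the pair $(\sigma_1,\sigma_2)$, namely that a generic translate of $\Sigma_1$ meets $\Sigma_2$ near $\gamma$, is governed by the difference of the local star cones of $\sigma_1,\sigma_2$ along $\gamma$, which contains $\mathbb R\tau$ in its lineality; hence for a generic lift $v\in\mathbb R^n$ of a generic $\bar v\in O(\tau)$ the condition holds upstairs iff it holds for $\bar v$ downstairs. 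Combined with the weight preservation from Lemma \ref{projection polyhedral complex}, (i)--(iii) produce a weight-preserving bijection between the contributing pairs on the two sides, hence equality of the two weights at $\bar\gamma$, proving the first identity.

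For the associativity I would reduce to a single orbit: by linearity it suffices to treat $\gamma\subset O(\tau)$. By Definition \ref{compactified stable intersection} all of $(\gamma\cdot\Sigma_1)\cdot\Sigma_2$ and $\gamma\cdot(\Sigma_1\cdot\Sigma_2)$ are ordinary stable intersections taken inside $O(\tau)$, of $\gamma$ against $\overline\Sigma_1\cap O(\tau)$, $\overline\Sigma_2\cap O(\tau)$ and $\overline{\Sigma_1\cdot\Sigma_2}\cap O(\tau)$. The first identity rewrites the last factor as $(\overline\Sigma_1\cap O(\tau))\cdot(\overline\Sigma_2\cap O(\tau))$, so both sides coincide by the usual associativity of stable intersection in the single vector space $O(\tau)$ (\cite{allermann2010first}).

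I expect the main obstacle to be step (iii): the compatibility of the genericity and displacement data under $\pi_\tau$. I must ensure that a generic displacement upstairs projects to a generic displacement downstairs, and that the selection of contributing pairs is governed by the \emph{same} cone on both sides, which is precisely where the hypothesis $\tau\subset\rho(\sigma_i)$, making the relevant star-cone differences $\mathbb R\tau$-invariant, is used. Some care is also needed to choose the polyhedral structures on $\Sigma_1$, $\Sigma_2$ and $\Sigma_1\cdot\Sigma_2$ simultaneously so that Lemma \ref{projection polyhedral complex} applies to all three and the face $\gamma$ with $\bar\gamma=\pi_\tau(\gamma)$ is unambiguous.
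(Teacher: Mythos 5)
Your proposal is correct and follows essentially the same route as the paper: the paper likewise refines $\Sigma_1,\Sigma_2$ so that recession cones lie in $\Delta$ and intersections of faces are faces, invokes Lemma \ref{projection polyhedral complex} for the weight-preserving face bijection under $\pi_\tau$, and matches the stable-intersection multiplicities via the identity $\mathrm{Star}(\sigma,\Sigma_i)=\mathrm{Star}(\delta,\overline\Sigma_i\cap O(\tau))\times \mathbb R\tau$ --- which is precisely the $\mathbb R\tau$-invariance that your steps (i)--(iii) verify by unwinding the fan-displacement rule. The associativity claim is then deduced exactly as you do, by reducing to a single orbit and using associativity of the ordinary stable intersection inside $O(\tau)$.
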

\begin{proof}
Take refinement of $\Sigma_i$ such that for $\sigma_i\in\Sigma_i$ we have $\rho(\sigma_i)\in\Delta$ and $\sigma_1\cap\sigma_2$ is a face of both $\Sigma_1$ and $\Sigma_2$ (first refine $\Sigma_1$ as in Lemma \ref{projection polyhedral complex} and then take intersections of faces of $\Sigma_1$ and $\Sigma_2$). The induced structure on $\overline \Sigma_i\cap O(\tau)$ also satisfies that $\delta_1\cap\delta_2$ is a face of $\overline \Sigma_i\cap O(\tau)$ for all $\delta_i\in\overline \Sigma_i\cap O(\tau)$ and $i=1,2.$

It is easy to check that, with the induced structure, the correspondence in Lemma \ref{projection polyhedral complex} between $\Sigma_1\cap\Sigma_2$ and $(\overline\Sigma_1\cap O(\tau))\cap(\overline\Sigma_2\cap O(\tau))$ is still valid. We then have (set-theoretically):
\begin{equation*}
\begin{split}
\overline{\Sigma_1\cdot\Sigma_2}\cap O(\tau)&=\displaystyle\bigcup_{\sigma_i\in\Sigma_i\mathrm{\ and\ }\dim(\sigma_1+\sigma_2)=n}\overline{\sigma_1\cap\sigma_2}\cap O(\tau)\\&=\displaystyle\bigcup_{\begin{tiny}\begin{array}{cc}\sigma_i\in\Sigma_i\mathrm{\ and\ }\tau\subset\rho(\sigma_i)\\\dim(\sigma_1+\sigma_2)=n\end{array}\end{tiny}}\pi_\tau(\sigma_1\cap\sigma_2)\\&=\displaystyle\bigcup_{\begin{tiny}\begin{array}{cc}\sigma_i\in\Sigma_i\mathrm{\ and\ }\tau\subset\rho(\sigma_i)\\\dim(\sigma_1+\sigma_2)=n\end{array}\end{tiny}}\pi_\tau(\sigma_1)\cap\pi_\tau(\sigma_2)
\\&=\displaystyle\bigcup_{\begin{tiny}\begin{array}{cc}\delta_i\in(\Sigma_i\cap O(\tau))\\\dim(\delta_1+\delta_2)=n-d\end{array}\end{tiny}}\delta_1\cap\delta_2\\&=(\overline \Sigma_1\cap O(\tau))\cdot (\overline\Sigma_2\cap O(\tau)).
\end{split}
\end{equation*}

On the other hand, for any $\delta\in(\overline\Sigma_1\cap O(\tau))\cap(\overline\Sigma_2\cap O(\tau))$ of codimension $l+m$ in $O(\tau)$ and $\sigma\in\Sigma_1\cap\Sigma_2$ with $\pi_\tau(\sigma)=\delta$, we have Star$(\sigma,\Sigma_i)=\mathrm{Star}(\delta,\overline\Sigma_i\cap O(\tau))\times \mathbb R\tau$ and hence
$$i(\sigma, \Sigma_1\cdot\Sigma_2)=i(\delta,(\overline\Sigma_1\cap O(\tau))\cdot(\overline\Sigma_2\cap O(\tau)))$$

\end{proof}
 

Given a tropical cycle $\Sigma\subset N_\mathbb R$ such that $\sigma\in\Sigma$ implies $\rho(\sigma)\in\Delta$, according to \cite[Definition 5.1]{allermann2014rational}, there is a natural balanced structure on $\rho(\Sigma)$ where the weight of $\tau\in\rho(\Sigma)$ is defined by:
$$m(\tau)=\sum_{\sigma\in\Sigma,\rho(\sigma)=\tau}m(\sigma).$$

\begin{lem}\label{boundary of recession fan}
If $\Delta$ is compatible with $\Sigma$, then $\rho(\overline{\Sigma}\cap O(\tau))=\overline{\rho(\Sigma)}\cap O(\tau)$ as tropical cycles on $O(\tau)$.
\end{lem}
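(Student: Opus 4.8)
The plan is to reduce the identity to a cone-by-cone comparison of the two fan cycles in $O(\tau)$, matching first their supports and then their weights. Throughout I would fix, exactly as in the proof of Lemma \ref{projection polyhedral complex}, a polyhedral structure on $\Sigma$ refined so that $\rho(\sigma)\in\Delta$ for every $\sigma\in\Sigma$ and so that $\sigma\cap\delta$ is a face for all $\sigma\in\Sigma$ and $\delta\in\Delta$. This simultaneously fixes the induced structures on $\overline\Sigma\cap O(\tau)$, on $\rho(\Sigma)$, and (since $\rho(\Sigma)$ is then a subfan of $\Delta$, hence compatible with $\Delta$) on $\overline{\rho(\Sigma)}\cap O(\tau)$. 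By Proposition \ref{boundary tropical cycle} and \cite[Definition 5.1]{allermann2014rational}, both sides of the asserted equality are tropical cycles in $O(\tau)$ of dimension $k-\dim\tau$, where $k=\dim\Sigma$, so it suffices to compare their weighted maximal cones.

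First I would identify the supports. The key computation is the commutativity of projection with passage to the recession cone: for a face $\sigma$ with $\tau\subset\rho(\sigma)$ one has $\rho(\pi_\tau(\sigma))=\pi_\tau(\rho(\sigma))$, which follows from the explicit inequality description of $\pi_\tau(\sigma)$ obtained in Lemma \ref{projection polyhedral complex} upon setting the constants $b_i$ to $0$. Combining this with the description $\overline\Sigma\cap O(\tau)=\bigcup_{\sigma\in\Sigma,\,\tau\subset\rho(\sigma)}\pi_\tau(\sigma)$ (Lemma 3.9 of \cite{osserman2011lifting}), and the same description applied to the fan $\rho(\Sigma)$, I obtain that both $\rho(\overline\Sigma\cap O(\tau))$ and $\overline{\rho(\Sigma)}\cap O(\tau)$ are supported on the cones $\pi_\tau(\rho(\sigma))$ for $\sigma\in\Sigma$ with $\tau\subset\rho(\sigma)$.

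Next I would match weights on a fixed maximal cone $\bar\eta$, of dimension $k-\dim\tau$. On the left the recession-weight formula gives $\sum m(\sigma)$ summed over maximal $\sigma$ with $\tau\subset\rho(\sigma)$ and $\pi_\tau(\rho(\sigma))=\bar\eta$, using that the correspondence of Lemma \ref{projection polyhedral complex} preserves weights. On the right, the correspondence applied to the fan $\rho(\Sigma)$ gives the weight $m_{\rho(\Sigma)}(\eta)=\sum m(\sigma)$ summed over maximal $\sigma$ with $\rho(\sigma)=\eta$, where $\eta$ is the cone of $\rho(\Sigma)$ projecting to $\bar\eta$. The two index sets coincide once I know that $\eta$ is the unique $k$-dimensional cone of $\Delta$ containing $\tau$ as a face with $\pi_\tau(\eta)=\bar\eta$; this is the injectivity of $\pi_\tau$ on cones of $\Delta$ having $\tau$ as a face, which I would prove from the dimension count $\dim\pi_\tau(\eta)=\dim\eta-\dim\tau$ together with the common-face property of $\Delta$ (if two such cones had equal image, their common face would already surject onto that image, forcing the cones to coincide).

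The main obstacle is precisely this weight bookkeeping: ensuring that grouping the faces $\sigma$ by the value of $\pi_\tau(\rho(\sigma))$ agrees with grouping them by the value of $\rho(\sigma)$ itself. Everything hinges on the injectivity of $\pi_\tau$ on the relevant cones, which is what prevents two distinct recession cones of $\Sigma$ from being collapsed together in $O(\tau)$ and thereby having their weights spuriously added. Once this is in hand, the two weighted fans agree cone by cone, and the equality of tropical cycles follows.
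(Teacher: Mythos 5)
Your proof is correct and follows essentially the same route as the paper's: the paper's (much terser) argument rests on exactly the same key identity $\pi_\tau(\rho(\sigma))=\rho(\pi_\tau(\sigma))$ for $\tau\subset\rho(\sigma)$, combined with the face correspondence of Lemma \ref{projection polyhedral complex} and the weight formula defining the recession fan. The injectivity of $\pi_\tau$ on cones containing $\tau$, which you establish directly, is already implicit in the one-to-one correspondence of Lemma \ref{projection polyhedral complex} (applied to the subfan $\rho(\Sigma)\subseteq\Delta$), so your weight bookkeeping is precisely the ``definition of structures on the recession fans'' step that the paper invokes without elaboration.
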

\begin{proof}
Let $\sigma\in\Sigma$, note that if $\tau\subset\rho(\sigma)$ then $\pi_\tau(\rho(\sigma))=\rho(\pi_\tau(\sigma))$. The lemma follows directly from Lemma \ref{projection polyhedral complex} and the definition of structures on the recession fans.
\end{proof}

It turns out that, in the compactified sense, intersecting with $\Sigma$ is numerically the same as intersecting with $\rho(\Sigma)$ when the two tropical cycles have complementary codimension:

\begin{prop}\label{intersect recession}
Let $\Sigma$ and $\gamma$ be as in Definition \ref{compactified stable intersection}. Assume $\dim(\Sigma)+\dim(\gamma)=n$. We have $\deg(\overline \gamma\cdot_c\overline \Sigma)=\deg(\overline \gamma\cdot_c{\overline{\rho(\Sigma)}})$.
\end{prop}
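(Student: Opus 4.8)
The plan is to transport both sides of the claimed equality into the single vector space $O(\tau)\cong N_\mathbb R/\mathbb R\tau$, where they become ordinary (uncompactified) stable intersection degrees, and then to invoke the principle that a $0$-dimensional stable intersection degree depends only on the recession data of its factors.

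First I would record the dimension bookkeeping. Since the correspondence of Lemma \ref{projection polyhedral complex} preserves codimension, $A:=\overline\Sigma\cap O(\tau)$ is a tropical cycle in $O(\tau)$ (Proposition \ref{boundary tropical cycle}) of the same codimension as $\Sigma$, so that $\dim A+\dim\gamma=\dim O(\tau)$ and $\gamma\cdot_c\Sigma=\gamma\cdot A$ is a well-defined $0$-cycle by Definition \ref{compactified stable intersection}. By Lemma \ref{boundary of recession fan} we have $\overline{\rho(\Sigma)}\cap O(\tau)=\rho(A)$ as weighted tropical cycles, so $\gamma\cdot_c\rho(\Sigma)=\gamma\cdot\rho(A)$. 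Thus the proposition reduces to the purely affine statement
$$\deg(\gamma\cdot A)=\deg(\gamma\cdot\rho(A))$$
for complementary-dimensional tropical cycles $\gamma$ and $A$ in the vector space $O(\tau)$.

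To prove this I would use the fan displacement rule for stable intersection: for a generic cocharacter $w$ and the displaced cycle $A+sw$, the transverse intersection $\gamma\cap(A+sw)$ is finite and its total multiplicity computes $\deg(\gamma\cdot A)$, with each contribution attached to a pair of maximal faces $(\sigma_\gamma,\sigma_A)$ whose recession cones together span $O(\tau)$, the contribution being $m(\sigma_\gamma)\,m(\sigma_A)$ times a lattice index. The key observation is that the linear span of a face agrees with that of its recession cone, so this lattice index depends only on $\rho(\sigma_\gamma)$ and $\rho(\sigma_A)$. Grouping the contributions by recession cone, and using that the induced weights on $\rho(\gamma)$ and $\rho(A)$ are exactly the sums $m(\tau')=\sum_{\rho(\sigma)=\tau'}m(\sigma)$ appearing in the weight construction preceding Lemma \ref{boundary of recession fan}, this sum becomes literally the fan displacement expression for $\rho(\gamma)\cdot\rho(A)$. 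Hence $\deg(\gamma\cdot A)=\deg(\rho(\gamma)\cdot\rho(A))$; the same identity applied to the pair $(\gamma,\rho(A))$, together with $\rho(\rho(A))=\rho(A)$, gives $\deg(\gamma\cdot\rho(A))=\deg(\rho(\gamma)\cdot\rho(A))$, and comparing the two expressions yields the desired equality. Alternatively one may simply cite \cite{allermann2014rational}, where the degree of a complementary stable intersection is shown to be determined by the recession fans.

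The main obstacle is precisely this last input. The delicate point is that $A$ and $\rho(A)$ are \emph{not} translates of one another, so mere translation invariance of the degree does not suffice; one must genuinely argue through the generic displacement that only the asymptotic cones and their induced multiplicities contribute, and check that the lattice indices occurring in the fan displacement rule are computed from the recession cones alone. Everything else is the dimension bookkeeping and the identification of induced weights already furnished by Lemma \ref{projection polyhedral complex}, Proposition \ref{boundary tropical cycle} and Lemma \ref{boundary of recession fan}.
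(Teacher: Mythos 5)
Your reduction is exactly the paper's: by Definition \ref{compactified stable intersection} and Lemma \ref{boundary of recession fan}, both sides become ordinary stable-intersection degrees inside the single vector space $O(\tau)$, namely $\deg(\gamma\cdot A)$ and $\deg(\gamma\cdot\rho(A))$ with $A=\overline\Sigma\cap O(\tau)$. At that point the paper's entire proof is to quote \cite[Theorem 5.3]{allermann2014rational}, which says that complementary-dimension stable intersection degrees are determined by recession fans. So your ``alternative'' one-line citation is precisely the paper's argument, and with that citation your proposal is complete and correct.

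The self-contained displacement argument you offer in place of the citation, however, has a genuine gap: two of its key assertions are false at the level of individual contributions. First, for a generic displacement $A+sw$, the transverse intersection points of $\gamma\cap(A+sw)$ are \emph{not} confined to pairs of facets whose recession cones span $O(\tau)$: bounded facets of $\gamma$ (recession cone $\{0\}$) do meet displaced facets of $A$ --- for instance a tropical line displaced across the bounded hexagon of a smooth plane tropical cubic --- and such pairs have no counterpart in $\rho(\gamma)\cdot\rho(A)$. Second, ``the linear span of a face agrees with that of its recession cone'' fails for exactly these facets: a bounded edge has a full line as its span but trivial recession cone, so the lattice index in the displacement rule is not computed from recession data. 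The whole content of \cite[Theorem 5.3]{allermann2014rational} is that these bounded contributions nonetheless balance out in the total; proving that requires a large-scale limit $s\to\infty$ (or a scaling degeneration of $\gamma$ and $A$ to their recession fans, or the rational-equivalence machinery of loc.\ cit.), and the term-by-term grouping you describe cannot work as stated --- note also that even a pair of facets whose recession cones do span may simply fail to meet after displacement, since facets are not full cones. So either keep the citation (which is what the paper does), or be prepared to honestly reprove that theorem; the displacement sketch as written does not substitute for it.
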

\begin{proof}
Follows from Lemma \ref{boundary of recession fan} and \cite[Theorem 5.3]{allermann2014rational}
\end{proof}

\subsection*{} We are now able to check the compatibility of the compactified stable intersection ``$\cdot_c$" and the intersection product ``$\ast$":

\begin{lem}\label{intersection compatible}
If $\beta\subset N_\mathbb R$ is a tropical cycle which is compatible with $\Delta$ and $\alpha\subseteq O(\tau)$, then $[\overline \alpha]\ast[\overline \beta]=[\overline \alpha\cdot_c \overline \beta]$.
\end{lem}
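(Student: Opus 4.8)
The plan is to reduce the identity, which is a statement about classes in $A_*(N_\mathbb{R}(\Delta))$, to the case where $\beta$ is a single Cartier divisor, and then to recognize both sides as one and the same intersection computed inside the affine stratum $O(\tau)$.

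First I would set up the \emph{reduction}. Both products are bilinear and associative: associativity of $\ast$ is part of \cite[Theorem 2.57]{meyer2011intersection}, while associativity of $\cdot_c$ is Proposition \ref{associativity of compactified stable intersection}. By \cite[Remark 2.68]{meyer2011intersection} the class $[\beta]$ is a $\mathbb{Z}$-linear combination of classes of the form $[\varphi_1\cdots\varphi_s\cdot N_\mathbb{R}(\Delta)]$, where on the open stratum $\varphi_1\cdots\varphi_s\cdot N_\mathbb{R}(\Delta)$ is, by the agreement of the divisor product with the stable product on $N_\mathbb{R}$ (\cite{allermann2010first,allermann2014rational}), the stable intersection $\mathrm{div}(\varphi_1)\cdots\mathrm{div}(\varphi_s)$. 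Since $\beta$ is compatible with $\Delta$, I would choose these representatives so that each $\mathrm{div}(\varphi_j)$ is again compatible with $\Delta$; then Proposition \ref{associativity of compactified stable intersection}, in the form $\overline{\Sigma_1\cdot\Sigma_2}\cap O(\tau)=(\overline{\Sigma_1}\cap O(\tau))\cdot(\overline{\Sigma_2}\cap O(\tau))$, lets me distribute the restriction to $O(\tau)$ across the product. Combining the two associativities reduces the claim to the case $\beta=\mathrm{div}(\varphi)$ for a single Cartier divisor $\varphi$ whose divisor is compatible with $\Delta$.

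In the divisor case, by the definition of $\ast$ via Cartier divisors we have $[\alpha]\ast[\mathrm{div}(\varphi)]=[\varphi\cdot\alpha]$, the Meyer Cartier-divisor intersection. Because $\alpha\subseteq O(\tau)$, the local functions realizing $\varphi$ on charts meeting $|\alpha|$ must restrict to $O(\tau)$, and the resulting finite component $E_{\tau,\tau}\subseteq O(\tau)$ is exactly the divisor intersection of the restricted function $\varphi^\tau$ with $\alpha$ computed inside the affine space $O(\tau)$. Using Lemma \ref{projection polyhedral complex}, Proposition \ref{boundary tropical cycle} and Lemma \ref{boundary of recession fan}, I would identify $\overline{\mathrm{div}(\varphi)}\cap O(\tau)$ with the divisor $\mathrm{div}_{O(\tau)}(\varphi^\tau)$ of the restricted function inside $O(\tau)$. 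The equality of the divisor product with the stable product on the affine space $O(\tau)$ then gives $E_{\tau,\tau}=\alpha\cdot_c\mathrm{div}(\varphi)$ on the nose.

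The main obstacle is the remaining infinite-cell components $E_{\tau,\sigma}$ of $\varphi\cdot\alpha$ for $\tau\subsetneq\sigma$, which a priori live in deeper strata $O(\sigma)$, whereas $\alpha\cdot_c\mathrm{div}(\varphi)$ is concentrated in $O(\tau)$. I would argue that these contribute trivially to the class. The weight of such a component is proportional to $r'_P(v_{P/P'})$, the slope of $\varphi$ in the recession direction leading from $P$ into $O(\sigma)$; the requirement that $\varphi$ restrict to $O(\sigma)$ (needed for $\varphi\cdot\alpha$ to be defined where $\overline{|\alpha|}$ meets $O(\sigma)$) forces this slope to vanish, so an honest choice of restricting local functions makes $E_{\tau,\sigma}=0$. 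When this cannot be arranged simultaneously for all $\sigma$ by a single function, the fallback is to show $\sum_{\sigma\supsetneq\tau}[E_{\tau,\sigma}]=0$ in $A_*(N_\mathbb{R}(\Delta))$ by absorbing these boundary-supported cycles into principal divisors: modifying $\varphi$ by an $M$-linear function changes $\mathrm{div}(\varphi)$ only by a rationally trivial cycle supported on the boundary (Proposition \ref{toric divisor}(2),(3)), and this freedom is exactly what is needed to cancel the $E_{\tau,\sigma}$ without altering the class. Carrying out this boundary bookkeeping, together with checking that compatible Cartier representatives of $[\beta]$ exist, is where the real care is required; granting it, $[\alpha]\ast[\beta]=[E_{\tau,\tau}]=[\alpha\cdot_c\beta]$.
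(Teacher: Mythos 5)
Your reduction step contains the essential gap. You pass from $[\beta]$ to a $\mathbb{Z}$-linear combination of classes $[\mathrm{div}(\varphi_{1})\cdots\mathrm{div}(\varphi_{s})]$ via \cite[Remark 2.68]{meyer2011intersection}, and this is fine on the left-hand side, since $\ast$ is defined on rational equivalence classes. But on the right-hand side the compactified stable intersection $\alpha\cdot_c\beta$ is defined on the \emph{cycle} $\beta$ (via $\overline{\beta}\cap O(\tau)$), not on its class; the decomposition of $[\beta]$ is only an equality of classes, so replacing $\beta$ by the rationally equivalent cycle $\sum_i n_i\,\mathrm{div}(\varphi_{i,1})\cdots\mathrm{div}(\varphi_{i,s_i})$ inside $[\alpha\cdot_c\beta]$ presupposes that $[\alpha\cdot_c\beta]$ depends only on $[\beta]$. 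That invariance is not a formality: it is essentially the content of the lemma, and without it your ``combining the two associativities'' does not reduce anything. The paper's proof is structured precisely around this point: it first treats $\beta$ a codimension-one subfan of $\Delta$ by an explicit support-function Cartier divisor (close in spirit to your divisor case), extends to subfan cycles by associativity, and then handles a general compatible $\beta$ by proving $[\beta]=[\rho(\beta)]$ \emph{and} $[\alpha\cdot_c\beta]=[\alpha\cdot_c\rho(\beta)]$, the latter by testing degrees against subfan cycles of complementary dimension using Proposition \ref{intersect recession}, Lemma \ref{boundary of recession fan} and the Minkowski-weight isomorphism of Theorem \ref{chow group}. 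This recession-fan/degree-testing step is exactly the ingredient your argument is missing, and there is no way around some such argument, because nothing in the definition of $\cdot_c$ makes it factor through rational equivalence a priori.

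Two smaller points. First, your assertion that the representatives from \cite[Remark 2.68]{meyer2011intersection} ``can be chosen so that each $\mathrm{div}(\varphi_j)$ is compatible with $\Delta$'' is unjustified as stated; the paper instead manufactures its divisors directly as support functions on $\Delta$, for which compatibility (and the vanishing of the boundary components $E_{\tau,\sigma}$, since such functions are asymptotically linear along each cone) is automatic. Second, your fallback for the $E_{\tau,\sigma}$ terms --- ``absorbing boundary-supported cycles into principal divisors'' --- is too vague to carry weight: modifying $\varphi$ by a globally affine function changes nothing in $\varphi\cdot\alpha$, and a genuine cancellation argument would again need the kind of class-level bookkeeping that the paper gets from its degree-testing step. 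Your analysis of the finite component $E_{\tau,\tau}$ agreeing with the stable intersection inside $O(\tau)$ is sound and matches the paper's ``straightforward calculation,'' but the overall proof does not go through without the recession-fan reduction.
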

\begin{proof}
First assume $|\beta|=|\Delta(n-1)|$. Then we can find a cartier divisor $\varphi$ such that $\varphi\cdot \mathscr X=\overline\beta$. Indeed, assume $[\overline \beta]=\sum_{\rho\in\Delta(1)}a_\rho[V(\rho)]$. Let $f$ be the support function on $\Delta$ such that $f(u_\rho)=a_\rho$ where $u_\rho$ as in \S 2.2. Then $f\cdot \mathscr X=\overline F-\sum a_\rho V(\rho)$ where $F$ is a codimension one subfan of $\Delta$. Hence $[\overline F]=\sum a_\rho[V(\rho)]$, and Theorem \ref{chow group} implies that $\overline \beta=\overline F$ as tropical cycles. For all $\sigma\in\Delta(n)$ take a (unique) $m_\sigma\in M$ such that $f|_\sigma=\langle \ \cdot\ ,m_\sigma\rangle$. Now let $\varphi=(U_\sigma,r_\sigma)_{\sigma\in\Delta(n)}$ where $U_\sigma=\cup_{\tau\prec\sigma}O(\tau)$ and $r_\sigma=f-\langle\ \cdot\ , m_\sigma\rangle$. Note that $r_\sigma$ is zero on $\sigma$. Straightforward calculation shows that $\overline \beta=\varphi\cdot \mathscr X$ (all infinite cells have weight zero) and, using symbols in Remark \ref{intersection in rn}, we have $\overline \beta\cap O(\tau)=\varphi_\tau\cdot_\tau O(\tau)$ as tropical cycles on $O(\tau)$. Now by loc.cit. we have $$[\overline \beta]*[\overline\alpha]=[\varphi\cdot\overline \alpha]=[\sum_{\tau\subset \sigma}\overline E_{\sigma}]=[\overline E_{\tau}]=[\overline{\varphi_\tau\cdot_\tau \alpha}]=[\overline \beta\cdot_c\overline \alpha].$$

The associativity of ``$\ast$" and ``$\cdot_c$" guarantees that the conclusion is true for $\beta$ any cycle of subfans of $\Delta$. In particular the morphism $\phi$ in Theorem \ref{chow group} is a ring isomorphism.

For the general case note that Theorem \ref{chow group} and Proposition \ref{intersect recession} shows that $[\overline \beta]=[\overline{\rho(\beta)}]$. Hence we have:
$$[\overline \alpha]\ast[\overline \beta]=[\overline \alpha]\ast[\rho(\overline\beta)]=[\overline\alpha\cdot_c\overline{\rho(\beta)}]=[\overline \alpha\cdot_c\overline\beta].$$
One can check the last equality by counting the degree of the intersection products of both side with all cycles $\overline \gamma$ where $\gamma$ is a subfan of $\Delta$ of suitable codimension, which, as we already showed, is the same as the degree of $\overline\alpha\cdot_c\overline{\rho(\beta)}\cdot_c\overline\gamma$ and $\overline \alpha\cdot_c\overline\beta\cdot_c\overline\gamma$ respectively.
\end{proof}

\section{The Complementary Codimension Case}
In this section we assume that we are given two closed subschemes $X$ and $X'$ of $T_N$ which are of complementary codimension but possibly have positive dimensional intersection. In this case $X\cdot X'$ is a zero dimensional cycle class whose degree is not well-defined in general. We show that, after a suitable compactification and restriction to a component $C$ of $\mathrm{trop}(X)\cap\mathrm{trop}(X')$, the degree above is invariant under rational equivalence, and is equal to the corresponding intersection number on the tropical side.

Let $(\Delta, \mathcal P)$ be a compactifying datum for $X,X'$ and $C$. Let $(\mathcal P',\epsilon,v)$ be a tropical moving data for $(\Delta,\mathcal P)$. We then have(\cite[Corollary 4.8]{osserman2011lifting}):

\begin{lem}\label{connected component}
In the situation above we have $$\overline{\mathrm{trop}(X)}\cap\overline{\mathrm{trop}(X')}\cap\overline{|\mathcal P'|}=\overline C\subset \overline{|\mathcal P|}\subset\overline{|\mathcal P'|}^\circ$$ and for all $r\in [-\epsilon,0)\cup(0,\epsilon]$ we have
$$\overline{\mathrm{trop}(X)+r\cdot v}\cap \overline{\mathrm{trop}(X')}\cap\overline{|\mathcal P'|}=({\mathrm{trop}(X)+r\cdot v})\cap {\mathrm{trop}(X')}\cap{|\mathcal P'|}\subset |\mathcal P'|^\circ\subset \overline{|\mathcal P'|}^\circ$$where all closures are taken in $N_R(\Delta)$.
\end{lem}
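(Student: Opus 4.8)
The plan is to verify both equalities stratum by stratum in the decomposition $N_\mathbb R(\Delta)=\coprod_{\tau\in\Delta}O(\tau)$, so that the whole content reduces to the statement on the finite stratum $O(\{0\})=N_\mathbb R$ together with a boundary–vanishing argument on the strata $O(\tau)$ with $\tau\neq\{0\}$.

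First I would dispose of the containment chain $\overline C\subseteq\overline{|\mathcal P|}\subseteq\overline{|\mathcal P'|}^\circ$. Since $(\Delta,\mathcal P)$ is a compactifying datum we have $|C|=\mathrm{trop}(X)\cap\mathrm{trop}(X')\cap|\mathcal P|\subseteq|\mathcal P|$, so monotonicity of closure gives $\overline C\subseteq\overline{|\mathcal P|}$, while the inclusion $\overline{|\mathcal P|}\subseteq\overline{|\mathcal P'|}^\circ$ is exactly the remark following Definition \ref{delta thickening}, as $\mathcal P'$ is a $\Delta$-thickening of $\mathcal P$. One inclusion in the first equality is then formal: because the closure of an intersection is contained in the intersection of closures, $\overline C=\overline{\mathrm{trop}(X)\cap\mathrm{trop}(X')\cap|\mathcal P'|}\subseteq\overline{\mathrm{trop}(X)}\cap\overline{\mathrm{trop}(X')}\cap\overline{|\mathcal P'|}$, where I have used that $(\Delta,\mathcal P')$ is again a compactifying datum so that the finite intersection equals $|C|$.

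The substance is the reverse inclusion, i.e. ruling out extra intersection points on each boundary stratum. Here I would invoke the description of closures recalled in the proof of Lemma \ref{projection polyhedral complex} (Lemma 3.9 of \cite{osserman2011lifting}): for a polyhedral complex $\Sigma$ compatible with $\Delta$ one has $\overline\Sigma\cap O(\tau)=\bigcup_{\sigma\in\Sigma,\ \tau\subseteq\rho(\sigma)}\pi_\tau(\sigma)$. Applying this to $\mathrm{trop}(X)$, $\mathrm{trop}(X')$ and $|\mathcal P'|$, a point $\bar x\in O(\tau)$ lying in all three closures arises as $\bar x\in\pi_\tau(\sigma)\cap\pi_\tau(\sigma')\cap\pi_\tau(P)$ for faces $\sigma,\sigma',P$ whose recession cones all contain $\tau$. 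One must then promote this boundary coincidence to a genuine coincidence in $N_\mathbb R$: choosing a representative over $\bar x$ and translating far along $\mathrm{relint}(\tau)$, the compatibility of $\Delta$ with $\mathrm{trop}(X')\cap\mathcal P'$ controls how these polyhedra sit relative to $\tau$ and forces the three to meet along the $\tau$-fiber, producing points of $\mathrm{trop}(X)\cap\mathrm{trop}(X')\cap|\mathcal P'|=|C|$ converging to $\bar x$; hence $\bar x\in\overline C$. I expect this boundary–vanishing to be the main obstacle, and it is exactly the content of \cite[Corollary 4.8]{osserman2011lifting}: the compatibility hypothesis on $\Delta$ is precisely what prevents the recession directions of $\mathrm{trop}(X)$ and $\mathrm{trop}(X')$ from manufacturing spurious intersections at infinity inside $\overline{|\mathcal P'|}$.

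For the perturbed equation with $r\in[-\epsilon,0)\cup(0,\epsilon]$, I would first note that translation by $r\cdot v$ leaves recession cones unchanged, so $\rho(\mathrm{trop}(X)+r\cdot v)=\rho(\mathrm{trop}(X))$ and $\Delta$ remains a compactifying fan compatible with the translated data. Part (b) of the tropical moving data then gives that $(\mathrm{trop}(X)+r\cdot v)\cap\mathrm{trop}(X')\cap|\mathcal P'|$ is finite and contained in $|\mathcal P'|^\circ$; being a finite subset of $N_\mathbb R$ it is already closed, so its closure in $N_\mathbb R(\Delta)$ adds no points and the containment $\subseteq|\mathcal P'|^\circ\subseteq\overline{|\mathcal P'|}^\circ$ is immediate. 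The equality of the three-fold intersection of closures with this finite set then follows from the same stratum-by-stratum argument as above, now applied to $\mathrm{trop}(X)+r\cdot v$ in place of $\mathrm{trop}(X)$: on the finite stratum the two sides agree by construction, and since the recession data is unchanged the boundary-vanishing argument rules out new points on every $O(\tau)$ with $\tau\neq\{0\}$.
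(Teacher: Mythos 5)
Your proposal is correct and takes essentially the same approach as the paper: the paper gives no independent argument for this lemma, deducing it directly as \cite[Corollary 4.8]{osserman2011lifting}, and your proof likewise rests on that same corollary for the only nontrivial step (ruling out extra intersection points on the boundary strata $O(\tau)$, $\tau\neq\{0\}$, where your translation-along-$\mathrm{relint}(\tau)$ sketch is not itself a complete argument). The elementary containments you derive from the definitions of compactifying datum, $\Delta$-thickening, and tropical moving data are correct, but they are already packaged inside that citation.
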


Let $Z_{\overline C}$ be the union of irreducible components of $\overline X\cap\overline X'$ whose tropicalization intersects $\overline C$, where $\overline X$ and $\overline X'$ are closures of $X$ and $X'$ in $X(\Delta)$ respectively. We claim that $Z_{\overline C}$ is a union of connected components of $\overline X\cap \overline X'$, hence has a natural scheme structure which is in particular flat over $\overline X\cap \overline X'$. Indeed, according to Lemma \ref{connected component} we know $\overline C$ is a connected component of $\overline{\mathrm{trop}(X)}\cap \overline{\mathrm{trop}(X')}$, and the claim follows from the fact that the tropicalization of every irreducible component of $\overline X\cap \overline X'$ is a connected set. In particular $\mathrm{trop}(Z_{\overline C})\subseteq \overline C$.

Moreover, \cite[Corollary 4.17]{osserman2011lifting} shows that $Z_{\overline C}$ is proper over the ground field, hence there is a well-defined degree of every cycle class of dimension zero on $Z_{\overline C}$. In the following we take $\mathcal P=C$ with the induced polyhedral complex structure. It follows that $(\Delta,\mathcal P)$ is a compactifying datum if $\Delta$ is a compactifying fan for $C$:

\begin{sit}\label{complementary codimension}
Let $X$ and $X'$ be closed subschemes of $T_N$ of pure dimension $k$ and $l$ respectively, where $k+l=n$. Let $C$ be a connected component of trop$(X)\cap$trop$(X')$. Let $\Delta$ be a unimodular compactifying fan for $C$, let $i_{\overline C}\colon Z_{\overline C}\rightarrow \overline X\cap \overline X'$ be the inclusion.
\end{sit}

\begin{thm}\label{infinite intersection}
In Situation \ref{complementary codimension}, we have $$\int_{Z_{\overline C}}i^*_{\overline C}(\overline X\cdot \overline X')=\sum_{u\in C} i(u,\mathrm{trop}(X)\cdot\mathrm{trop}(X'))$$where $\overline X\cdot \overline X'$ is the refined intersection product.
\end{thm}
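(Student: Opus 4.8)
The plan is to compute the degree on the left-hand side by deforming $\overline{X}'$ by a one-parameter family of torus translates and using the properness established via Situation \ref{complementary codimension}. The key invariance is that the refined intersection product, restricted to the proper component $Z_{\overline C}$, has a well-defined degree that is preserved under rational equivalence of the family. First I would set up the family: let $v \in N$ be the cocharacter from the tropical moving data $(\mathcal P', \epsilon, v)$, and consider the action of $T_N$ on $X(\Delta)$ together with the translates $t \cdot \overline{X}'$ for $t$ ranging over a suitable one-dimensional subfamily of $T$ with $\mathrm{trop}(t) = r \cdot v$. The divisor-theoretic fact to invoke is Lemma \ref{family}: taking $Z$ to be (an open subset of) the base parametrizing the translation, $Y = X(\Delta) \times Z$ smooth over $Z$, and applying the refined Gysin homomorphism $i_t^!$, the specialized intersection products $i_t^!(\alpha) \cdot i_t^!(\beta)$ agree for all fibers, so the degree of $\overline{X} \cdot (t \cdot \overline{X}')$ restricted to the relevant locus is constant in $t$ (including the degenerate fiber $t = e$ giving $\overline{X} \cdot \overline{X}'$).

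Next I would localize to the neighborhood of $\overline{C}$. By Lemma \ref{connected component}, for $r \in [-\epsilon, 0) \cup (0, \epsilon]$ the intersection $(\mathrm{trop}(X) + r \cdot v) \cap \mathrm{trop}(X') \cap |\mathcal P'|$ is finite, transverse, and lands in the interior $|\mathcal P'|^\circ$, and its closure meets the boundary in exactly $\overline{C}$. This is precisely the setting of \cite[Theorem 6.4]{osserman2011lifting}: for the \emph{perturbed} pair $\mathrm{trop}(X) + r \cdot v$ and $\mathrm{trop}(X')$, which now intersect properly (in fact in finitely many points inside $|\mathcal P'|^\circ$), the lifting theorem of Osserman--Rabinoff gives that the number of points of $\overline{X} \cap (t \cdot \overline{X}')$ tropicalizing to $\overline{C}$, counted with intersection multiplicity, equals the tropical stable intersection number $\sum_u i(u, (\mathrm{trop}(X) + r \cdot v) \cdot \mathrm{trop}(X'))$ of points supported on the perturbed complex near $C$. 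The key geometric input supplied by Lemma \ref{connected component} is that passing to the compactification $X(\Delta)$ keeps the relevant intersection points away from the boundary strata, so no contributions are lost or gained from $X(\Delta) \setminus T_N$.

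It then remains to match the two sides as the perturbation parameter $r \to 0$. On the tropical side, the stable intersection is by definition computed by exactly such a small generic translation: the multiplicity $\sum_{u \in C} i(u, \mathrm{trop}(X) \cdot \mathrm{trop}(X'))$ of the stable intersection supported on $C$ equals $\sum_u i(u, (\mathrm{trop}(X) + r \cdot v) \cdot \mathrm{trop}(X'))$ for the generic $v$ and small $r$, since the stable intersection is translation-invariant and $C$ is a connected component whose contribution is isolated by $|\mathcal P'|$. On the algebraic side, the constancy of the degree from Lemma \ref{family} identifies $\int_{Z_{\overline C}} i^*_{\overline C}(\overline{X} \cdot \overline{X}')$ with the count for the perturbed intersection $\overline{X} \cap (t \cdot \overline{X}')$ localized to $\overline{C}$. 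Chaining these equalities through \cite[Theorem 6.4]{osserman2011lifting} closes the argument.

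I expect the main obstacle to be the bookkeeping in the limit $r \to 0$, specifically verifying that the degree contribution localized to $Z_{\overline C}$ is the one that survives the specialization and that \emph{no} intersection multiplicity escapes to neighboring components of $\mathrm{trop}(X) \cap \mathrm{trop}(X')$ or to the boundary of $X(\Delta)$. This is exactly where the compactifying datum and the thickening $\mathcal P'$ do the work: the containments $\overline{C} \subset \overline{|\mathcal P|} \subset \overline{|\mathcal P'|}^\circ$ from Lemma \ref{connected component} ensure $\overline{C}$ is insulated by the buffer region $|\mathcal P'| \setminus |\mathcal P|$, so the finite intersection locus for the perturbed family stays within the interior and its limit remains supported on $Z_{\overline C}$. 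The properness of $Z_{\overline C}$ over the ground field (from \cite[Corollary 4.17]{osserman2011lifting}) is what makes $\int_{Z_{\overline C}}$ well-defined and stable under the specialization; the careful point is to apply Lemma \ref{family} on a base $Z$ and support locus $|\alpha|_t \cap |\beta|_t$ that is itself proper, so that degrees can legitimately be compared fiber-by-fiber.
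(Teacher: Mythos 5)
Your outline reproduces the paper's strategy: the same translation family via the cocharacter $v$ from the tropical moving data (the paper moves $\overline X$ rather than $\overline X'$, forming $\mathfrak X=(p_1,\mu)(G_m\times\overline X)$ and $\mathfrak X'=G_m\times\overline X'$, but this is immaterial), the use of Lemma \ref{family} to relate the fibers of the family of refined intersection products, and the final appeal to Osserman--Rabinoff once the perturbed tropicalizations meet properly. However, there is a genuine gap at the step you yourself flag as the main obstacle, and the mechanism you propose for it does not work. You assert that ``the constancy of the degree from Lemma \ref{family}'' identifies $\int_{Z_{\overline C}}i^*_{\overline C}(\overline X\cdot\overline X')$ with the localized count for the perturbed fibers. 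Lemma \ref{family} only gives an equality of cycle classes $i_t^!(\alpha)\cdot i_t^!(\beta)=i_t^!(\alpha\cdot\beta)$ supported on the \emph{whole} fiber intersection $\mathfrak X_t\cap\mathfrak X'_t$, which is not proper in general; it says nothing about the degree of the part of that class lying over $\overline{|\mathcal P'|}$. That localization is cut out by a tropicalization condition, which is not algebraic, and the parameter set $\mathrm{val}^{-1}([-\epsilon,\epsilon])\subset G_m$ over which you need constancy is not an algebraic subfamily either, so no conservation-of-number argument on the algebraic family can compare the fiber at $t=e$ (valuation $0$) with the fibers of nonzero valuation. Your suggested fix --- ``apply Lemma \ref{family} on a \ldots support locus $|\alpha|_t\cap|\beta|_t$ that is itself proper'' --- is not available: the support in that lemma is forced to be the full fiber intersection, and the proper piece you want (the union of components over $\overline{|\mathcal P'|}$) varies with $t$ and is not a priori the specialization of any global subfamily.

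What is missing is precisely the analytic core of the paper's proof. One writes $[\mathfrak X]\cdot[\mathfrak X']=\sum_i a_i[\alpha_i]$ with $\alpha_i$ one-dimensional subvarieties of $\mathfrak X\cap\mathfrak X'$ (those contained in a single fiber $Z_{t_0}$ are killed by $i_t^!$, the rest meet each $Z_t$ properly), and then, for each curve $\alpha$, passes to Berkovich spaces: setting $\mathcal S_\epsilon=\mathrm{val}^{-1}[-\epsilon,\epsilon]$ and $\mathcal U^{\mathcal P'}=\mathrm{trop}^{-1}(\overline{|\mathcal P'|})$, one shows that the tube $\mathcal Y_\alpha=\alpha^{\mathrm{an}}\cap(\mathcal S_\epsilon\times\mathcal U^{\mathcal P'})$ is a union of connected components of $\alpha^{\mathrm{an}}\cap(\mathcal S_\epsilon\times X(\Delta)^{\mathrm{an}})$, Zariski closed, and finite over $\mathcal S_\epsilon$. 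Only then do \cite[Proposition 5.7]{osserman2011lifting} (comparison of algebraic and analytic local intersection numbers) and \cite[Proposition 5.8]{osserman2011lifting} (constancy of the total local degree in a quasi-smooth analytic family with finite intersection locus) yield that $\int_{(\mathfrak X_t\cap\mathfrak X'_t)_{\overline{|\mathcal P'|}}}\alpha\cdot Z_t$ is constant on all of $\mathcal S_\epsilon$, which is what connects the unperturbed fiber to the perturbed ones. Without this analytic continuity argument --- which cannot be replaced by Lemma \ref{family} plus the properness of $Z_{\overline C}$ --- the chain of equalities in your third paragraph does not close.
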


\begin{proof}[Proof of Theorem \ref{infinite intersection}] As in \cite{osserman2011lifting} the idea is to approach $\overline X\cdot\overline X'$ (resp. $\mathrm{trop}(X)\cdot\mathrm{trop}(X')$) by $\overline X\cdot z\overline X'$ (resp. $\mathrm{trop}(X)\cdot \mathrm{trop}(zX')$) where $z\in T_N$ such that $\mathrm{trop}(zX')=\mathrm{val}(z)+\mathrm{trop}(X')$ is a small perturbation of $\mathrm{trop}(X')$.

Consider the action $\mu\colon G_m\times X(\Delta)\rightarrow X(\Delta)$ of $G_m$ on $X(\Delta)$ given by $\mu(t,x)=v(t)\cdot x$. Let $p_1\colon G_m\times X(\Delta)\rightarrow G_m$ and $p_2\colon G_m\times X(\Delta)\rightarrow X(\Delta)$ be the two projections. Denote $\mathfrak X=(p_1,\mu)(G_m\times \overline X)$ and $\mathfrak X'=G_m\times \overline X'$.

Note $G_m$ is a nonsingular variety of dimension 1, and $Z=G_m\times X(\Delta)$ is smooth over $G_m$ of relative dimension $n$, and $[\mathfrak X]\in A_{k+1}(Z)$ and $[\mathfrak X']\in A_{l+1}(Z)$. Now by Lemma~\ref{family} we have for $t\in G_m$ the equality $i_t^![\mathfrak X]\cdot i_t^![\mathfrak X']=i_t^!([\mathfrak X]\cdot [\mathfrak X'])\in A_0(\mathfrak X_t\cap\mathfrak X_t')$ where $i_t\colon t\hookrightarrow G_m$ is the inclusion. 

Since $\mathfrak X\rightarrow G_m$ and $\mathfrak X'\rightarrow G_m$ are flat morphisms, we know by excess intersection formula that $i_t^![\mathfrak X]=[\mathfrak X_t]$ and $i_t^![\mathfrak X']=[\mathfrak X'_t]$. On the other hand, assume $[\mathfrak X]\cdot [\mathfrak X']=\sum a_i[\alpha_i]$ where $\alpha_i\in A_1(\mathfrak X\cap \mathfrak X')$ are subvarieties. If $\alpha_i\subset Z_{t_0}=X(\Delta)$ for some $t_0\in G_m$, then $(\alpha_i)_t=\alpha_i$ or $\emptyset$, in both cases we have
$i_t^!(\alpha)=0$ by definition.
Moreover since $Z$ is flat over $G_m$, we have that $i_{Z_t}\colon Z_t\hookrightarrow Z$ is a regular embedding of codimension one and $i_t^!(\gamma)=i_{Z_t}^!(\gamma)=Z_t\cdot \gamma$ in $A_*(\gamma_t)$ for $\gamma$ any subscheme of $Z$. Put these all together we get:
\begin{equation}[\mathfrak X_t]\cdot [\mathfrak X'_t]=\sum_{i=1}^{m} a_i[\alpha_i]\cdot [Z_t]~\mathrm{in}~A_0(\mathfrak X_t\cap \mathfrak X'_t)\end{equation}
where $\alpha_i$ are subvarieties of $\mathfrak X\cap \mathfrak X'$ of dimension 1 which intersect properly with $Z_t$ for all $t\in G_m$. 

We next show that $\int_{(\mathfrak X_t\cap \mathfrak X'_t)_{\overline{|\mathcal P'|}}}\mathfrak X_t\cdot \mathfrak X'_t$ is constant for $t$ with valuation in $[-\epsilon,\epsilon]$, where $(\mathfrak X_t\cap \mathfrak X'_t)_{\overline{|\mathcal P'|}}$ is the part of $\mathfrak X_t\cap \mathfrak X'_t$ that tropicalize to $\overline{|\mathcal P'|}$. Again this number is well-defined according to the discussion below Lemma \ref{connected component}. More generally we claim that under the same assumption of $t$, the number $\int_{(\mathfrak X_t\cap \mathfrak X'_t)_{\overline{|\mathcal P'|}}}\mathfrak [\alpha]\cdot [Z_t]$ is constant for subvarieties $\alpha$ of $\mathfrak X\cap \mathfrak X'$ of dimension one
which intersect properly with $Z_t$ for all $t\in G_m$.

Let $\mathcal S_\epsilon =\mathrm{val}^{-1}[-\epsilon,\epsilon]\subset G_m^{\mathrm{an}}$ and $\mathcal U^{\mathcal P'}=\mathrm{trop}^{-1}(\overline {|\mathcal P'|})\subset X(\Delta)^{\mathrm{an}}$. Let $\mathcal Y_\alpha= \alpha^{\mathrm{an}}\cap(\mathcal S_\epsilon\times \mathcal U^{\mathcal P'})$. We have:
$$\mathcal Y_\alpha= \alpha^{\mathrm{an}}\cap(\mathcal S_\epsilon\times X(\Delta)^{\mathrm{an}})\cap 
(\mathcal S_\epsilon\times  \mathcal U^{\mathcal P'})= \alpha^{\mathrm{an}}\cap(\mathcal S_\epsilon\times X(\Delta)^{\mathrm{an}})\cap (\mathrm{trop}\circ p_2)^{-1}(\overline {|\mathcal P'|}^\circ)$$
since $\alpha$ is supported in $\mathfrak X\cap \mathfrak X'$. Thus $\mathcal Y_\alpha$ is a union of connected components of $\alpha^{\mathrm{an}}\cap(\mathcal S_\epsilon\times X(\Delta)^{\mathrm{an}})$, and is 
Zariski closed in $\mathcal S_\epsilon\times X(\Delta)^{\mathrm{an}}$ and hence
proper over $\mathcal S_\epsilon$ by (\cite[Proposition 4.16]{osserman2011lifting}). Denote $\mathcal S_\epsilon\times \mathcal U^{\mathcal P'}$ by $\mathcal Z$, it follows from \cite[Proposition 5.7]{osserman2011lifting} that:$$
\int_{(\mathfrak X_t\cap \mathfrak X'_t)_{\overline{|\mathcal P'|}}}\mathfrak \alpha\cdot Z_t=\sum_{x\in\mathcal Y_\alpha\cap\mathcal Z_t} i(x,\mathcal Y_\alpha\cdot \mathcal Z_t; \mathcal Z).$$

We will use \cite[Proposition 5.8]{osserman2011lifting}. Look at the projection map $f\colon \mathcal S_\epsilon\times \mathcal Z\rightarrow \mathcal S_\epsilon$ and the Zariski closed subspaces $\mathcal X=\mathcal S_\epsilon\times \mathcal Y_\alpha$ and $\mathcal X'=\Delta(\mathcal S_\epsilon)\times \mathcal U^{\mathcal P'}$ where $\Delta(\mathcal S_\epsilon)$ is the diagonal of $ S_\epsilon\times \mathcal S_\epsilon$. It is easy to see that both $\mathcal X$ and $\mathcal X'$ are flat over $\mathcal S_\epsilon$, and $\mathcal Z$ is an analytic domain in $S_\epsilon\times X(\Delta)^{\mathrm{an}}$, hence quasi-smooth. It follows that $f$ is quasi-smooth and so is $S_\epsilon\times \mathcal Z$ since $S_\epsilon$ is quasi-smooth.

To show $\mathcal X\cap\mathcal X'$ is finite over $\mathcal S_\epsilon$, according to the following fibered diagram we have $\mathcal X\cap \mathcal X'\simeq \mathcal Y_\alpha$.
$$\begin{tikzcd} 
\mathcal X\cap\mathcal X'\rar\dar&\mathcal S_\epsilon\times \mathcal Y_\alpha\rar\dar&\mathcal Y_\alpha\dar\\
\Delta(\mathcal S_\epsilon)\times \mathcal U^{\mathcal P'}\rar&\mathcal S_\epsilon\times\mathcal S_\epsilon\times \mathcal U^{\mathcal P'}\rar&\mathcal S_\epsilon\times \mathcal U^{\mathcal P'}.
\end{tikzcd}$$
Hence $\mathcal X\cap\mathcal X'$ is proper over $\mathcal S_\epsilon$. Moreover $(\mathcal X\cap \mathcal X')_t\simeq \mathcal Y_\alpha\cap \mathcal Z_t$ is finite, since $\alpha\cap Z_t$ is finite. It follows that $\mathcal X\cap \mathcal X'$ is finite over $S_\epsilon$. Now by \cite[Proposition 5.8]{osserman2011lifting} we know that $\int_{(\mathfrak X_t\cap \mathfrak X'_t)_{\overline{|\mathcal P'|}}}\mathfrak \alpha\cdot Z_t$ is constant for $t\in\mathrm{val}^{-1}([-\epsilon,\epsilon])$. The theorem follows from \cite[Theorem 6.3]{osserman2011lifting} since $\mathrm{trop}(\mathfrak X_t)$ intersects $\mathrm{trop}(\mathfrak X_t')$ properly for $t\in\mathrm{val}^{-1}([-\epsilon,0)\cup(0,\epsilon])$.
\end{proof}

\begin{ex}\label{selfintersection}
Let $X=X'$ be the plane curves in $T=\mathrm{Spec}K[x^{\pm},y^{\pm}]$ defined by the equation $f(x,y)=ax^n+by+1=0$ with val$(a)=\mathrm{val}(b)=0$. The tropicalization trop$(X)$ equals the union of rays $R_1=\mathbb R_{\geq 0}\cdot e_1$ and $R_2=\mathbb R_{\geq 0}\cdot e_2$ and $R_3=\mathbb R_{\geq 0}\cdot (-e_1-ne_2)$. These rays have multiplicities $1,n$ and $1$ respectively as showed in the diagram below.
$$ \begin{tikzpicture}[
      scale=1.5,
      level/.style={thick},
      virtual/.style={thick,densely dashed,<-},
      trans/.style={thick,<-},
      classical/.style={thin,double,<->,shorten >=4pt,shorten <=4pt,>=stealth}
    ]
 \draw[trans] (0cm,-5em) -- node[left]{1}  (1cm,0em);
    \draw[virtual] (1cm,-2.5em) -- (1cm,0em) ;
    \draw[trans] (1cm,2.5em) --node[left]{$n$}  (1cm,0em) ; 
      \draw[trans] (2cm,0em) --node[above]{1}  (1cm,0em);
                 \end{tikzpicture}$$

Let $\Delta$ be the complete fan generated by the rays $\rho_1=(1,0),\rho_2=(0,1),\rho_3=(-1,-n),\rho_4=(0,-1)$ and let $D_i, i=1,2,3,4$ be the corresponding torus-invariant divisor. Then $\Delta$ is a smooth compactifying fan for trop$(X)$ and $X(\Delta)$ is a Hirzebruch Surface. For generic choices of $a$ and $b$ we have div$(f)=\overline X-nD_3-D_4$ and hence $[\overline X]=nD_3+D_4$ as cycle classes. According to Theorem \ref{infinite intersection} we have 
$$i_K(\overline X\cdot \overline X',X(\Delta))=i(0,\mathrm{trop}(X)\cdot\mathrm{trop}(X))=n$$
which is the same as the degree of $(nD_3+D_4)\cdot (nD_3+D_4)$.

\end{ex}
\begin{rem}\label{selfinter rem} 
Note that if $\mathrm{trop}(X)\cap\mathrm{trop}(X')$ contains only one component $C$, and $\Delta$ is also compatible with either $\mathrm{trop}(X)$ or $\mathrm{trop}(X')$ and $X(\Delta)$ is a smooth projective variety, as in Example \ref{selfintersection}, then Theorem \ref{infinite intersection} is the same as saying that $\deg([\overline X]\cdot[\overline X'])=\deg([\mathrm{trop}(\overline X)]\ast[\mathrm{trop}(\overline X')])$ (Lemma \ref{chow ring}).  For two reasons: (1) all points of $\overline X\cap\overline X'$ tropicalize to $\overline C$ (cf. \cite[Proposition 3.12]{osserman2011lifting}, take $\mathcal P=\mathrm{trop}(X)$ or $\mathrm{trop}(X')$), and (2) the degree of $[\mathrm{trop}(\overline X)]\ast[\mathrm{trop}(\overline X')]$ is the same as the degree of $\mathrm{trop}( X)\cdot\mathrm{trop}( X')$ (cf. Lemma \ref{intersection compatible}). This is not true in general, see the example below.

\end{rem}

\begin{ex}\label{selfinter ex} 
Let $X,X'$ be curves in $T^2$ defined by $f(x,y)=ax^2+xy+ay^2+x+y+a$ and $g(x,y)=x+y+a$ respectively, where $a\in K$ such that $\mathrm{val}(a)=1$. Then $\mathrm{trop}(X)$ and $\mathrm{trop}(X')$ are as in the following graph, where the red part is the unique component $C$ of $\mathrm{trop}(X)\cap\mathrm{trop}(X')$.
$$ \begin{tikzpicture}[
      scale=1,
      virtual/.style={thick,densely dashed,},
      trans/.style={thick,},
rans/.style={thick,red},
      classical/.style={thin,double,<->,shorten >=4pt,shorten <=4pt,>=stealth}
    ]
 \draw[rans] (1cm,1cm)--(0cm,0cm);
    \draw[virtual] (-2cm,-2cm) -- (0cm,0cm) ;
    \draw[trans] (0cm,0cm) --(-1cm,0cm) ; 
      \draw[trans] (0cm,-1cm) -- (0cm,0cm);
\draw[trans] (-3cm,-2cm) --(-1cm,0cm);
\draw[trans] (-2cm,-3cm) --(0cm,-1cm);
\draw[trans] (2cm,-1cm) --(0cm,-1cm);
\draw[trans] (-1cm,0cm) --(-1cm,2cm) ;
 \draw[rans] (1cm,1cm) --(3cm,1cm);
  \draw[rans] (1cm,1cm) --(1cm,3cm);
 \draw[trans](-0.2cm,0.3cm)node{$(0,0)$};
 \draw[rans](1.5cm,1.3cm)node{$(1,1)$};
 \draw[trans](0.4cm,-1.3cm)node{$(0,-1)$};
 \draw[trans](-1.6cm,0.2cm)node{$(-1,0)$};
 \draw[trans](-3.6cm,-1.8cm)node{$\mathrm{trop}(X)$};
 \draw[trans](-2.6cm,-2.3cm)node{$\mathrm{trop}(X')$};

                 \end{tikzpicture}$$
Let $\Delta$ be the complete fan whose facets are all quadrants of the plane, which is not compatible with $\mathrm{trop}(X)$ or $\mathrm{trop}(X')$. Then $X(\Delta)=\mathbb P^1\times \mathbb P^1$ and $\overline X$ and $\overline X'$ are curves of type $(2,2)$ and $(1,1)$ respectively, and $N_\mathbb R(\Delta)=(\mathbb R\cup \{-\infty,+\infty\})^2$. Straightforward calculation shows that $\deg([\mathrm{trop}(\overline X)]\ast[\mathrm{trop}(\overline X')])=\deg([\overline X]\cdot[\overline X'])=4$ but $\int_{Z_{\overline C}}i^*_{\overline C}(\overline X\cdot \overline X')=\deg(\mathrm{trop}(X)\cdot\mathrm{trop}(X'))=2$. One checks easily that $X\cap X'$ contains two points, counted with multiplicity, that tropicalize to $(1,1)$, while $(\overline X\cap\overline X')\backslash T^2$ contains two points that tropicalize to $(-\infty,-\infty)\not\in \overline C$.
\end{ex}

Using the same definition and argument as in \cite[Theorem 6.10]{osserman2011lifting} we generalize Theorem \ref{infinite intersection} as follows:
\begin{sit}\label{infinite multiple situation}
Let $X_1,...,X_m$ be pure-dimensional closed subschemes of $T$ with $\sum\mathrm{codim}(X_i)=n$ and $m\geq 2$. Let $C$ be a connected component of $\mathrm{trop}(X_1)\cap\cdots\cap\mathrm{trop}(X_m)$ and $\Delta$ a unimodular compactifying fan for $C$.
\end{sit}

\begin{thm}\label{infinite multiple intersection}
In Situation \ref{infinite multiple situation} let $Z_{\overline C}$ be the union of 
irreducible components of $ \cap_i\overline X_i$ which tropicalize to $\overline C$,
and
 $i_{\overline C}\colon Z_{\overline C}\hookrightarrow \cap_i\overline X_i$ be the inclusion. We have: 
$$\int_{Z_{\overline C}}i^*_{\overline C}(\overline X_1\cdots \overline X_m)=\sum_{u\in C} i(u,\mathrm{trop}(X_1)\cdots \mathrm{trop}(X_m))$$where $\overline X_1\cdots \overline X_m$ is the refined intersection product.
\end{thm}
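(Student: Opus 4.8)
The goal is to extend Theorem~\ref{infinite intersection} from two subschemes to $m\geq 2$ subschemes $X_1,\dots,X_m$ of complementary total codimension. The plan is to run essentially the same nonarchimedean degeneration argument, but now perturbing $m-1$ of the factors simultaneously by independent one-parameter subgroups, so that after perturbation the tropicalizations meet properly and one may invoke the (multiple-intersection) proper-lifting result \cite[Theorem 6.10]{osserman2011lifting} together with the tropical multiple stable intersection count. First I would fix a compactifying datum $(\Delta,\mathcal P)$ for $X_1,\dots,X_m$ and $C$ (with $\mathcal P=C$) and an analogue of the tropical moving data from Definition~\ref{moving lemma}: a $\Delta$-thickening $\mathcal P'$, a radius $\epsilon>0$, and cocharacters $v_1,\dots,v_{m-1}\in N$ such that for all small nonzero $(r_1,\dots,r_{m-1})$ the translated tropical cycles $\mathrm{trop}(X_1)+r_1 v_1,\ \dots,\ \mathrm{trop}(X_{m-1})+r_{m-1}v_{m-1},\ \mathrm{trop}(X_m)$ meet properly inside $|\mathcal P'|^\circ$, in finitely many points each lying in the relative interiors of facets. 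The existence of such $v_i$ is the multi-factor generalization of \cite[Lemma 4.7]{osserman2011lifting}, and I would either cite the corresponding statement from \cite{osserman2011lifting} used in their Theorem~6.10 or remark that it follows by the same transversality/genericity argument.

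Next I would set up the family over the torus. Let $Z=\mathbb{G}_m^{\,m-1}\times X(\Delta)$, which is smooth over $S:=\mathbb{G}_m^{\,m-1}$ of relative dimension $n$, and for each $i<m$ define $\mathfrak X_i=(p_1,\mu_i)(S\times\overline X_i)$ where $\mu_i$ is the action twisting $\overline X_i$ by $v_i(t_i)$, while $\mathfrak X_m=S\times\overline X_m$. Iterating Lemma~\ref{family} along the regular embedding $i_t\colon t\hookrightarrow S$ (a closed point of the nonsingular base $S$ of dimension $m-1$) gives
$$i_t^![\mathfrak X_1]\cdots i_t^![\mathfrak X_m]=i_t^!\bigl([\mathfrak X_1]\cdots[\mathfrak X_m]\bigr)\in A_0\bigl((\cap_i\mathfrak X_i)_t\bigr),$$
and flatness of each $\mathfrak X_i\to S$ identifies $i_t^![\mathfrak X_i]$ with the fiber class $[(\mathfrak X_i)_t]$. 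Writing $[\mathfrak X_1]\cdots[\mathfrak X_m]=\sum_j a_j[\alpha_j]$ with $\alpha_j$ one-dimensional subvarieties of $\cap_i\mathfrak X_i$, the same argument as in the $m=2$ case shows the components lying in a single fiber contribute zero, so the total count reduces to a finite sum of terms $[\alpha_j]\cdot[Z_t]$, each of which I must show is constant for $t\in\mathrm{val}^{-1}([-\epsilon,\epsilon]^{m-1})$.

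The constancy step is where the nonarchimedean input enters and is the main obstacle. As in the proof of Theorem~\ref{infinite intersection}, I would form the analytic polytope $\mathcal S_\epsilon=\mathrm{val}^{-1}([-\epsilon,\epsilon]^{m-1})\subset S^{\mathrm{an}}$ and $\mathcal U^{\mathcal P'}=\mathrm{trop}^{-1}(\overline{|\mathcal P'|})\subset X(\Delta)^{\mathrm{an}}$, set $\mathcal Y_{\alpha_j}=\alpha_j^{\mathrm{an}}\cap(\mathcal S_\epsilon\times\mathcal U^{\mathcal P'})$, and check (using that $\alpha_j$ is supported in $\cap_i\mathfrak X_i$ together with the containment $\overline{|\mathcal P|}\subset\overline{|\mathcal P'|}^\circ$ from Lemma~\ref{connected component}) that $\mathcal Y_{\alpha_j}$ is a union of connected components of $\alpha_j^{\mathrm{an}}\cap(\mathcal S_\epsilon\times X(\Delta)^{\mathrm{an}})$, hence Zariski closed and proper over $\mathcal S_\epsilon$ by \cite[Proposition 4.16]{osserman2011lifting}. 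Applying \cite[Proposition 5.7]{osserman2011lifting} and then the finiteness-and-properness formalism of \cite[Proposition 5.8]{osserman2011lifting} with the diagonal subspace $\mathcal X'=\Delta(\mathcal S_\epsilon)\times\mathcal U^{\mathcal P'}$ yields that $\int_{(\cap_i\mathfrak X_i)_{t,\overline{|\mathcal P'|}}}\alpha_j\cdot Z_t$ is independent of $t$. The genuinely new bookkeeping relative to the two-factor case is keeping track of the several independent directions $v_i$ and verifying that the separate perturbations can be made simultaneously generic; once that multi-parameter moving lemma is in hand, evaluating at $t$ in the torus with all coordinates of nonzero valuation reduces the count to the proper tropical case and the theorem follows from \cite[Theorem 6.10]{osserman2011lifting}, which supplies the equality with $\sum_{u\in C}i(u,\mathrm{trop}(X_1)\cdots\mathrm{trop}(X_m))$.
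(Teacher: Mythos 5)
Your overall strategy --- perturb the $\overline X_i$, apply Lemma \ref{family} over the parameter space, then prove constancy of the localized degree by the nonarchimedean argument --- is the right one in spirit, but your choice to perturb by $m-1$ \emph{independent} parameters over $S=G_m^{m-1}$ is a genuine departure from what the paper does, and it is where the proof breaks. The paper's proof is that of \cite[Theorem 6.10]{osserman2011lifting}: all $m$ factors are perturbed by a \emph{single} parameter $t\in G_m$ acting through different cocharacters, i.e.\ one chooses $v_1,\dots,v_m\in N$ and sets $\mathfrak X_i=\{(t,v_i(t)\cdot x)\mid t\in G_m,\ x\in\overline X_i\}$, the required one-parameter moving lemma being obtained by applying \cite[Lemma 4.7]{osserman2011lifting} to $\mathrm{trop}(X_1)\times\cdots\times\mathrm{trop}(X_m)$ and the diagonal copy of $N_\mathbb R$ inside $N_\mathbb R^m$ (equivalently, by reducing to the two-factor situation for $X_1\times\cdots\times X_m$ and $\delta(T)\subset T^m$). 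With the base kept one-dimensional, the proof of Theorem \ref{infinite intersection} then goes through word for word.

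The multi-parameter version fails at three concrete points. (i) Your moving lemma is false as stated: the set of bad tuples $(r_1,\dots,r_{m-1})$ is a closed polyhedral set which in general contains positive-dimensional cones through the origin, so it cannot be removed by shrinking $\epsilon$; this is unlike the one-parameter case, where the bad locus is a finite set of values of $r$ and \cite[Lemma 4.7]{osserman2011lifting} really does give properness for \emph{all} small nonzero $r$. For instance, in the self-intersection case $X_1=X_2$ of hypersurfaces (self-intersections are explicitly allowed, cf.\ Example \ref{selfintersection}), for \emph{any} choice of $v_1\neq v_2$ the tuples with $r_1v_1-r_2v_2$ lying in the direction span of a facet $F$ of $\mathrm{trop}(X_1)$ form a line through the origin in tuple space, and along it the first two translates overlap in full facet dimension, destroying properness of the total intersection; only \emph{generic} tuples are good. (ii) This infects the constancy step: the components $\alpha_j$ of $[\mathfrak X_1]\cdots[\mathfrak X_m]$ have dimension $m-1$ (not $1$, as you write), and a component of dimension $m-1$ dominating $S$ need \emph{not} be quasi-finite over $S$ once $\dim S\geq 2$: it may have positive-dimensional fibers over a proper closed subset of $S$, which can meet $\mathrm{val}^{-1}([-\epsilon,\epsilon]^{m-1})$ and may even contain $t=(1,\dots,1)$, since the fiber of $\cap_i\mathfrak X_i$ there is $\cap_i\overline X_i$ --- exactly the possibly positive-dimensional intersection the theorem is about. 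At such $t$, the passage via \cite[Proposition 5.7]{osserman2011lifting} from $i_t^!([\alpha_j])$ to a sum of local analytic intersection numbers, and the finiteness hypothesis of \cite[Proposition 5.8]{osserman2011lifting} on which your constancy claim rests, both require $\alpha_j\cap Z_t$ to be finite, and this now can fail. In the one-parameter case this cannot happen, because an irreducible curve dominating $G_m$ is automatically quasi-finite, which is precisely why the paper's proof of Theorem \ref{infinite intersection} may assume the surviving $\alpha_i$ meet every fiber properly. (iii) The components you must discard are no longer those ``lying in a single fiber'': they are those whose image in $S$ is a proper subvariety of any dimension, and for these the dichotomy $(\alpha_j)_t=\alpha_j$ or $\emptyset$ used in the two-factor proof is false, so the claimed vanishing of $i_t^!([\alpha_j])$ needs a separate argument that your proposal does not supply. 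All three problems disappear if you keep the base equal to $G_m$ and encode the $m-1$ independent perturbation directions in the cocharacters $v_i$; that is what ``using the same definition and argument as in \cite[Theorem 6.10]{osserman2011lifting}'' amounts to.
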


\begin{rem}
Note that Theorem \ref{infinite intersection} (resp. Theorem \ref{infinite multiple intersection}) can be easily generalized to the case where $C$ is a collection of connected components of trop$(X)\cap\mathrm{ trop}(X')$ (resp. $\cap_i\mathrm{trop}(X_i)$). 
\end{rem}

\section{The Higher Dimensional Case}
In this section we restate and prove Theorem \ref{introduction theorem 2}, where $ X\cdot X'$ is a cycle class of possibly positive dimension. For technical reasons we require the ambient space $X(\Delta)$ to be projective, in which case the $(n-1)$-skeleton of $\Delta$ is a tropical hypersurface. This is possible up to a refinement of $\Delta$ due to the toric Chow lemma and the projective resolution of singularities (\cite[Theorem 11.1.9]{cox2011toric}). In this case as a first approach we can consider the degree of $\overline X\cdot \overline X'$ (after restricting to a component of $\mathrm{trop}(X)\cap\mathrm{trop}(X')$) as a cycle class in a projective space, this is indeed equal to the degree of $\mathrm{trop}(X)\cdot\mathrm{trop}(X')$ (after restricting to the same component of $\mathrm{trop}(X)\cap\mathrm{trop}(X')$) intersecting with certain tropical hypersurfaces in $N_\mathbb R$.
Note that in this case $\Delta$ is complete, hence being compatible with a polyhedron is equivalent to being a compactifying fan of that polyhedron. 

\begin{sit}\label{higher situation}
Let $X$ and $X'$ be closed subschemes of $T_N$ of pure dimensions $k$ and $l$ respectively. Let $C$ be a connected component of trop$(X)\cap$trop$(X')$. Let $\Delta$ be a compactifying fan for $C$ such that $X(\Delta)$ is smooth and projective. Let $i_{\overline C}\colon Z_{\overline C}\rightarrow \overline X\cap \overline X'$ be the inclusion of the union of irreducible components of $\overline X\cap \overline X'$ whose tropicalization intersects $\overline C$ (as in Situation \ref{complementary codimension}).
\end{sit}

Given $\Delta$ a compactifying fan for $\mathcal P=C$, we can still find a $\Delta$-thickening $\mathcal P'$ of $\mathcal P$ such that $(\Delta,\mathcal P')$ is a compactifying datum for $X,X'$ and $C$. It follows that we still have 
$$\overline{\mathrm{trop}(X)}\cap\overline{\mathrm{trop}(X')}\cap\overline{|\mathcal P'|}=\overline C\subset \overline{|\mathcal P|}\subset\overline{|\mathcal P'|}^\circ.$$
Hence as in Situation \ref{complementary codimension} $Z_{\overline C}$ is an open and closed subscheme of $\overline X\cap\overline X'$ and the restriction $i^*_{\overline C}(\overline X\cdot\overline X')$ in $A_{k+l-n}(Z_{\overline C})$ is well-defined.

Take a fan $\Delta$ with $X(\Delta)$ smooth and projective. Let $D$ be a torus-invariant ample divisor\footnote{Note that on a smooth complete toric variety ample is equivalent to very ample.} on $X(\Delta)$. We can write $D=\sum_{\rho\in\Sigma(1)}a_\rho D_\rho$. For $\sigma \in \Delta(n)$ we take $m_\sigma\in M$ which satisfies the condition of Proposition \ref{toric divisor} (5), hence $m_\sigma$ is uniquely determined by $D$ and $D$ is also determined by $\{m_\sigma\}_{\sigma\in \Sigma(n)}$. It follows from Proposition \ref{toric divisor}(6)  that $\{m_\sigma\}_{\sigma\in \Delta(n)}$ are all vertices of $P_D$.

Now the set of sections $\{\chi^m|m\in P_D\cap M\}$ gives a closed immersion of $X(\Delta)$ into $\mathbb P^{s-1}$ where $s$ is the order of this set. In order to calculate the degree of $i^*_{\overline C}(\overline X\cdot\overline X')$, note that this is essentially the same as intersecting $k+l-n$ times with a hyperplane in $\mathbb P^{s-1}$, hence with a hyperplane section $L \subset X(\Delta)$ that intersects $T_N$. We can use Theorem \ref{infinite multiple intersection} to get a combinatorial result, as long as that $\Delta$ is a compactifying fan of (components of) $C\cap\mathrm{trop}(L\cap T_N).$

\begin{lem}\label{hyperplane}
Given $\Delta$ and $D$ as above, there exists a hypersurface $H=V(f)\subseteq T_N$ such that $\overline H$ is a hyperplane section of $X(\Delta)$ with respect to $D$ and $\mathrm{trop}(H)=|\Delta(n-1)|$.
\end{lem}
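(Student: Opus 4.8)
The plan is to exhibit $H$ as the vanishing locus on $T_N$ of an explicit global section of $\mathcal O(D)$ all of whose coefficients have valuation zero, so that the standard description of the tropicalization of a hypersurface forces $\mathrm{trop}(H)$ to be the corner locus of the support function of $P_D$. The starting observation is that, since $D$ is ample, Theorem \ref{correspondence of ampleness} identifies $\Delta$ with the normal fan of $P_D$; in particular the vertices of $P_D$ are exactly the lattice points $m_\sigma$ (for $\sigma\in\Delta(n)$) of Proposition \ref{toric divisor}(5), and each maximal cone $\sigma$ is the domain on which $w\mapsto\min_{m\in P_D}\langle m,w\rangle$ is linear and equal to $\langle m_\sigma,w\rangle$.

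Concretely, I would set $f=\sum_{m\in P_D\cap M}c_m\chi^m$ with each $c_m$ a unit of $K$ (for instance $c_m=1$). By Proposition \ref{toric divisor}(4) the characters $\{\chi^m\}_{m\in P_D\cap M}$ are precisely a basis of $\Gamma(X(\Delta),\mathcal O(D))$, so $f$ is the restriction to $T_N$ of a global section $s_f$ of $\mathcal O(D)$; under the embedding $X(\Delta)\hookrightarrow\mathbb P^{s-1}$ given by this basis, $s_f$ is cut out by a hyperplane, so $V(s_f)$ is a hyperplane section with respect to $D$. To see that this hyperplane section has no component on the boundary, hence equals $\overline H$ with $H=V(f)\subset T_N$, I would check that $s_f$ does not vanish identically on any invariant divisor $D_\rho$: the restriction of $s_f$ to the orbit $\mathcal O(\rho)$ is the character sum built from those $m$ lying on the facet of $P_D$ dual to $\rho$, and with unit coefficients this sum is nonzero. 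Consequently no $D_\rho$ is a component of $V(s_f)$, and since every component of a divisor on $X(\Delta)$ is either some $D_\rho$ or the closure of a torus hypersurface, we obtain $\overline H=V(s_f)$.

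It then remains to compute $\mathrm{trop}(H)$. Because $f=\sum c_m\chi^m$ with $\mathrm{val}(c_m)=0$, the tropicalization $\mathrm{trop}(H)$ is the set of $w\in N_\mathbb R$ at which $\min_{m\in P_D\cap M}\langle m,w\rangle$ is attained at least twice. For $w$ in the relative interior of a maximal cone $\sigma$ the minimum is attained uniquely at the vertex $m_\sigma$, so such $w$ lie outside $\mathrm{trop}(H)$; for $w$ in a codimension-one cone of $\Delta$ separating $\sigma$ and $\sigma'$, the minimum is attained at both $m_\sigma$ and $m_{\sigma'}$, so $w\in\mathrm{trop}(H)$. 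Since the vertices of $P_D$ are lattice points, the non-vertex lattice points of $P_D$ never lower the minimum and hence never enlarge this corner locus. Thus the corner locus is exactly the union of the codimension-one cones of the normal fan of $P_D$, which by Theorem \ref{correspondence of ampleness} is $|\Delta(n-1)|$. The main technical point to pin down is precisely this last identification — that the corner locus of the support function of $P_D$ is the codimension-one skeleton of its normal fan, with the sign convention matching the definition of $\mathrm{trop}$ — together with the verification that passing from the unit-coefficient section to its torus vanishing locus neither creates nor removes boundary components.
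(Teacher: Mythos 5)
Your proof is correct and takes essentially the same approach as the paper's: both exhibit $\overline H$ as the zero locus of an explicit global section of $\mathcal O(D)$ with valuation-zero coefficients, check that no boundary divisor $D_\rho$ is a component, and identify $\mathrm{trop}(H)$ with $|\Delta(n-1)|$ via the identification of $\Delta$ with the normal fan of $P_D$ coming from ampleness (Theorem \ref{correspondence of ampleness}). The only cosmetic differences are that the paper uses just the vertex characters $\chi^{m_\sigma}$ with generic unit coefficients and cites \cite[Lemma 3.4.6]{maclagan2015introduction} for the duality between $\mathrm{trop}(H)$ and the (trivial) regular subdivision of the Newton polytope, whereas you take all lattice points of $P_D$, verify non-vanishing on each orbit $\mathcal O(\rho)$ by linear independence of characters, and compute the corner locus directly.
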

\begin{proof}
Consider the regular functions $$f=\sum\limits_{\sigma\in \Delta(n)} a_\sigma\chi^{m_\sigma}\in K[M]\cap \Gamma(X(\Sigma),\mathcal O(D))
\mathrm{\ where\ val}(a_\sigma)=0 \mathrm{\ for\ all\ } \sigma\in \Delta(n).$$
Let $H=V(f)\subseteq T$. According to Proposition \ref{toric divisor}(6), for generic choice of $\{a_\sigma\}$ we have 
$$0=\mathrm{div}(f)=\overline H-\sum_{\rho\in \Delta(1)}a_\rho D_\rho=\overline H-D\in \mathrm{Cl}(X(\Delta)).$$
Hence $\overline H=D$ as a divisor class. 

On the other hand, since val$(a_\sigma)=0$, the corresponding regular subdivision of Newton polygon $P_D$  of $f$ is trivial. Thus trop$(H)$ is dual to $P_D$ by \cite[Lemma 3.4.6]{maclagan2015introduction}, in other words trop$(H)=|\Delta(n-1)|$. 
\end{proof}

\begin{rem}\label{lattice length}
Note that according to loc.cit. for $\sigma\in \Delta(n-1)$ the multiplicity of $\sigma$ in trop$(H)$ is the lattice length of the edge with end points $m_{\sigma_1}$ and $m_{\sigma_2}$, where $\sigma\subseteq \sigma_1,\sigma_2\in \Delta(n)$. 
\end{rem}


\begin{lem}\label{intersect several ample divisors}
In Situation \ref{higher situation}, for any $T$-invariant ample divisors $D_1,...,D_{k+l-n}$ on $X(\Delta)$ and the corresponding hyperplane sections $\overline H_1,...,\overline H_{k+l-n}$ constructed as above, we have

$$\int_{X(\Delta)}i^*_{\overline C}(\overline X\cdot \overline X')\cdot \overline H_1\cdots \overline H_{k+l-n}=\sum_{u\in C} i(u,\mathrm{trop}(X)\cdot\mathrm{trop}(X')\cdot \mathrm{trop}(H_1)\cdots\mathrm{trop}(H_{k+l-n}))$$
where the product on the left is taken inside $X(\Delta)$, and $i^*_{\overline C}(\overline X\cdot \overline X')$ is considered as a cycle class on $X(\Delta)$ by pushing forward from $Z_{\overline C}$.
\end{lem}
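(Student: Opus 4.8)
The plan is to realize the left-hand side as a single refined intersection product of the enlarged collection $\{X,X',H_1,\dots,H_{k+l-n}\}$ restricted to the appropriate boundary component, and then to apply Theorem~\ref{infinite multiple intersection} to this collection.

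First I would check that $\{X,X',H_1,\dots,H_{k+l-n}\}$ satisfies the hypotheses of Situation~\ref{infinite multiple situation}. Each $H_j=V(f_j)$ is a pure-dimensional hypersurface of codimension one, so the codimensions sum to $(n-k)+(n-l)+(k+l-n)=n$, and there are $m=k+l-n+2\geq 2$ members. By Lemma~\ref{hyperplane} we have $\mathrm{trop}(H_j)=|\Delta(n-1)|$ for every $j$, so the total set-theoretic intersection is $\mathrm{trop}(X)\cap\mathrm{trop}(X')\cap|\Delta(n-1)|$. Since $\mathrm{trop}(X)\cap\mathrm{trop}(X')$ is a polyhedral set with finitely many connected components, $C$ is open and closed in it, hence $C':=C\cap|\Delta(n-1)|$ is open and closed in $\mathrm{trop}(X)\cap\mathrm{trop}(X')\cap|\Delta(n-1)|$ and thus a finite union of its connected components. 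As $\Delta$ is unimodular (because $X(\Delta)$ is smooth) and complete, and $|\Delta(n-1)|$ is a subcomplex of $\Delta$, one checks that $\Delta$ remains a compactifying fan for $C'$. Applying the version of Theorem~\ref{infinite multiple intersection} for a union of components (as in the remark following it) to $C'$ then yields
$$\int_{Z_{\overline{C'}}}i^*_{\overline{C'}}(\overline X\cdot\overline X'\cdot\overline H_1\cdots\overline H_{k+l-n})=\sum_{u\in C'} i(u,\mathrm{trop}(X)\cdot\mathrm{trop}(X')\cdot\mathrm{trop}(H_1)\cdots\mathrm{trop}(H_{k+l-n})).$$

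Next I would match the two sides of this identity with those of the lemma. For the right-hand side, by associativity of the stable intersection in $\mathbb R^n$ the cycle $\mathrm{trop}(X)\cdot\mathrm{trop}(X')\cdot\prod_j\mathrm{trop}(H_j)$ is supported in $\mathrm{supp}(\mathrm{trop}(X)\cdot\mathrm{trop}(X'))\cap|\Delta(n-1)|$, so every $u$ contributing a nonzero multiplicity already lies in $|\Delta(n-1)|$; hence summing over $u\in C'$ is the same as summing over $u\in C$, which is the right-hand side of the lemma. For the left-hand side, I would use that $Z_{\overline C}$ is a union of connected components of $\overline X\cap\overline X'$, so that $i_{\overline C}$ is an open-and-closed immersion and $i^*_{\overline C}(\overline X\cdot\overline X')$ is exactly the part of the refined product $\overline X\cdot\overline X'$ supported on $Z_{\overline C}$. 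Intersecting with the Cartier classes $\overline H_1,\dots,\overline H_{k+l-n}$ on the smooth variety $X(\Delta)$ is associative and commutes with passage to a clopen part of the support, so $i^*_{\overline C}(\overline X\cdot\overline X')\cdot\overline H_1\cdots\overline H_{k+l-n}$ coincides with the part of the full refined product $\overline X\cdot\overline X'\cdot\overline H_1\cdots\overline H_{k+l-n}$ supported on $Z_{\overline C}\cap\bigcap_j\overline H_j$. I would then identify $Z_{\overline C}\cap\bigcap_j\overline H_j$ with $Z_{\overline{C'}}$: both are unions of connected components of $\overline X\cap\overline X'\cap\bigcap_j\overline H_j$, and a component lies in $Z_{\overline C}$ exactly when it tropicalizes into $\overline C\cap\overline{|\Delta(n-1)|}=\overline{C'}$, the required separation being furnished by Lemma~\ref{connected component} inside a compactifying datum $\overline{|\mathcal P'|}$ containing $\overline C$. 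Combining this with the projection formula (to pass between the degree on $X(\Delta)$ and the degree on $Z_{\overline{C'}}$) gives
$$\int_{X(\Delta)}i^*_{\overline C}(\overline X\cdot\overline X')\cdot\overline H_1\cdots\overline H_{k+l-n}=\int_{Z_{\overline{C'}}}i^*_{\overline{C'}}(\overline X\cdot\overline X'\cdot\overline H_1\cdots\overline H_{k+l-n}),$$
and the lemma follows by chaining the three displayed equalities.

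I expect the main obstacle to be the left-hand side bookkeeping: justifying that restriction to the clopen subscheme $Z_{\overline C}$ commutes with the iterated refined intersection against the $\overline H_j$, and in particular establishing the identity $Z_{\overline C}\cap\bigcap_j\overline H_j=Z_{\overline{C'}}$, so that the restriction to the component over $C$ of the $(k+l-n)$-cycle matches the restriction to the component over $C'$ of the zero-cycle. This is exactly where the compactifying datum and the finiteness and separation of components from Lemma~\ref{connected component} are essential; by contrast, the verification of the hypotheses of Theorem~\ref{infinite multiple intersection} and the support argument on the tropical side are comparatively routine.
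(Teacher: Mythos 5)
Your proposal is correct and follows essentially the same route as the paper's proof: both reduce the statement to Theorem \ref{infinite multiple intersection} applied to the enlarged collection $\{X,X',H_1,\dots,H_{k+l-n}\}$ over $C'=C\cap|\Delta(n-1)|$, after showing that restriction to the clopen subscheme $Z_{\overline C}$ commutes with the iterated refined intersection against the $\overline H_j$ (the paper does this via the fiber-square identity $i_1^!\circ i_{\overline C}^*=i_{\overline H_1}^*\circ i_1^!$ and induction, using flatness of open immersions) and that the tropical sum over $C'$ agrees with the sum over $C$ by a support argument. The step you flag as the main obstacle is indeed exactly where the paper invests its effort.
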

\begin{proof} We first claim that $$\int_{X(\Delta)}i^*_{\overline C}(\overline X\cdot \overline X')\cdot \overline H_1\cdots \overline H_{k+l-n}=\int_{X(\Delta)}i^*_{\overline C}(\overline X\cdot \overline X'\cdot \overline H_1\cdots \overline H_{k+l-n}).$$
Indeed, consider the following diagram:
$$\begin{tikzcd}
Z_{\overline C}\cap \overline H_1\rar\dar{i_{\overline H_1}}& Z_{\overline C}\dar{i_{\overline C}}\\
\overline X\cap\overline X'\cap\overline H_1\rar\dar& \overline X\cap \overline X'\dar\\
\overline H_1\rar{i_1} &X(\Delta)
\end{tikzcd}$$
and note that $i_{\overline C}$ is an open immersion, hence flat, we have 
$$i_{\overline C}^*(\overline X\cdot \overline X')\cdot\overline H_1 =i_1^!(i_{\overline C}^*(\overline X\cdot \overline X'))=i_{\overline H_1}^*(i_1^!(\overline X\cdot \overline X'))=i_{\overline H_1}^*(\overline X\cdot \overline X'\cdot \overline H_1).$$ Since $i_{\overline H_1}$ is also a open immersion and flat, we get the desired equation by induction.

Now note that $Z_{\overline C}\cap(\cap_{i=1}^{k+l-n} \overline H_i)=\{x\in \overline X\cap\overline X'\cap(\cap_{i=1}^{k+l-n}\overline H)|\mathrm{trop}(x)\in \overline C\}=\{x\in \overline X\cap\overline X'\cap(\cap_{i=1}^{k+l-n}\overline H)|\mathrm{trop}(x)\in \overline C\cap(\cap_{i=1}^{k+l-n} \mathrm{trop}(\overline H_i))$ and that $\overline C\cap (\cap_{i=1}^{k+l-n} \mathrm{trop}(\overline H_i))=\overline C\cap (\cap_{i=1}^{k+l-n}\overline{\mathrm{trop}(H_i)})
=\overline{C\cap |\Delta(n-1)|}$ by \cite[Lemma 3.10]{osserman2011lifting}. This is the closure of a union of connected components of $\mathrm{trop}(X)\cap\mathrm{trop}(X')\displaystyle\cap(\cap_{i=1}^{k+l-n}\mathrm{trop}(H_i))$. It is easy to check that $\Delta$ is a compactifying fan for $C\cap |\Delta(n-1)|$. Now the conclusion follows from Theorem \ref{infinite multiple intersection}:
\small{\begin{equation*}
\begin{split}
\int_{X(\Delta)}i^*_{\overline C}(\overline X\cdot \overline X'\cdot \overline H_1\cdots \overline H_{k+l-n})&=\sum\limits_{u\in C\cap \Delta(n-1)} i(u,\mathrm{trop}(X)\cdot\mathrm{trop}(X')\cdot \mathrm{trop}(H_1)\cdots\mathrm{trop}(H_{k+l-n}))\\
&=\ \ \ \ \ \ \sum\limits_{u\in C}\quad\quad i(u,\mathrm{trop}(X)\cdot\mathrm{trop}(X')\cdot \mathrm{trop}(H_1)\cdots\mathrm{trop}(H_{k+l-n})).
\end{split}
\end{equation*}}

\end{proof}

Setting $H_1=H_2=\cdots=H_{k+l-n}$ we have the following corollary, which is an analogue of Theorem \ref{infinite intersection}:

\begin{cor}\label{preserve degree}
In Situation \ref{higher situation}, for any $T$-invariant ample divisor $D$ on $X(\Delta)$, under the embedding $X(\Delta)\rightarrow \mathbb P^{s-1}$ induced by $D$ we have
$$\deg(i^*_{\overline C}(\overline X\cdot \overline X'))=\sum_{u\in C} i(u,\mathrm{trop}(X)\cdot\mathrm{trop}(X')\cdot (\Delta(n-1))^{k+l-n})$$where the multiplicities of facets of $\Delta(n-1)$ is given as in remark \ref{lattice length}.
\end{cor}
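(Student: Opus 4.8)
The plan is to deduce this directly from Lemma \ref{intersect several ample divisors} by specializing all the ample divisors and their hyperplane sections to a single one, and then reinterpreting the resulting intersection number as a degree in projective space. The entire substantive content is already contained in Lemma \ref{intersect several ample divisors}; the corollary is its specialization together with the standard projective-degree formula.

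First I would recall the standard fact from intersection theory (\cite[\S 8]{fulton1998intersection}) that for a cycle class $\alpha$ of dimension $d$ on a projective variety $Y\subseteq\mathbb P^{s-1}$, with $\xi$ the pullback to $Y$ of the hyperplane class of $\mathbb P^{s-1}$, one has $\deg(\alpha)=\int_Y \alpha\cdot\xi^{d}$. In our situation $Y=X(\Delta)$ is embedded via $D$, so $\xi=[D]$; the cycle $\alpha=i^*_{\overline C}(\overline X\cdot\overline X')$ pushed forward to $X(\Delta)$ has dimension $d=k+l-n$, and $Z_{\overline C}$ is proper so this degree is well defined. Then I would take a single hyperplane section $\overline H=V(f)$ as in Lemma \ref{hyperplane}, so that $[\overline H]=[D]=\xi$ in $\mathrm{Cl}(X(\Delta))$ and $\mathrm{trop}(H)=|\Delta(n-1)|$. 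Setting $D_1=\cdots=D_{k+l-n}=D$ and $\overline H_1=\cdots=\overline H_{k+l-n}=\overline H$, the left-hand side of Lemma \ref{intersect several ample divisors} becomes $\int_{X(\Delta)}i^*_{\overline C}(\overline X\cdot\overline X')\cdot\xi^{k+l-n}=\deg(i^*_{\overline C}(\overline X\cdot\overline X'))$, while its right-hand side becomes $\sum_{u\in C} i(u,\mathrm{trop}(X)\cdot\mathrm{trop}(X')\cdot\mathrm{trop}(H)^{k+l-n})$. Substituting $\mathrm{trop}(H)=|\Delta(n-1)|$, with facet multiplicities recorded as in Remark \ref{lattice length}, yields exactly the asserted formula.

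There is no genuine obstacle here, since the argument is a direct specialization. The only point requiring (minor) care is confirming that the class $[\overline H_i]$ is genuinely the hyperplane class $\xi$ of the chosen embedding, rather than merely some ample class: this is precisely the content of Lemma \ref{hyperplane}, where $\overline H$ is built as a hyperplane section with respect to $D$, so its class agrees with $c_1(\mathcal O(D))$, i.e. the restriction to $X(\Delta)$ of the hyperplane class of $\mathbb P^{s-1}$. One should also keep track that the matching of facet multiplicities of $\mathrm{trop}(H)$ with $|\Delta(n-1)|$ is exactly as described in Remark \ref{lattice length}, so that the tropical self-intersection $(\Delta(n-1))^{k+l-n}$ on the right has the intended weights.
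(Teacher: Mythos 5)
Your proposal is correct and follows essentially the same route as the paper: specialize Lemma \ref{intersect several ample divisors} to $\overline H_1=\cdots=\overline H_{k+l-n}=\overline H$ from Lemma \ref{hyperplane} and identify $\deg(i^*_{\overline C}(\overline X\cdot\overline X'))$ under the embedding with $\int_{X(\Delta)}i^*_{\overline C}(\overline X\cdot\overline X')\cdot[\overline H]^{k+l-n}$. The only cosmetic difference is that you justify this identification via the projective degree formula and $[\overline H]=\xi$, whereas the paper cites the excess intersection formula; these are interchangeable standard facts here.
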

\begin{proof} Let $H$ be the corresponding hypersurface in $T_N$ as before. Let $\widetilde H$ be a hyperplane in $\mathbb P^{s-1}$ such that $\widetilde H\cap X(\Delta)=\overline H$ (the closure of $H$ in $X(\Delta)$).
Then intersecting with $\widetilde H$ in $\mathbb P^{s-1}$ is essentially the same as intersecting with $\overline H$ in $X(\Delta)$: consider the fiber product: 
$$\begin{tikzcd}  \overline H\rar{i_{\overline H}}\dar{} &X(\Delta)\dar{}\\ \widetilde H\rar{i_{\widetilde H}} &\mathbb P^{s-1},
\end{tikzcd}$$
for a closed subvariety $\alpha$ of $X(\Delta)$ we have $\alpha\cdot \widetilde H=i_{\widetilde H}^!([\alpha])=i_{\overline H}^!([\alpha])=\alpha\cdot \overline H$ where the first intersection product is in $\mathbb P^{s-1}$ while the last one is in $X(\Delta)$. 
Hence the corollary follows from Lemma \ref{intersect several ample divisors}.
\end{proof}

The following lemma is a complement of Theorem \ref{chow group}:

\begin{lem}\label{chow ring}
If $X(\Delta)$ is smooth and projective then the isomorphism $$\phi\circ\varphi\colon A_*(X(\Delta))\rightarrow A_*(N_\mathbb R(\Delta))$$ in Theorem \ref{chow group} is induced by the tropicalization map.\footnote{Note that a more general case, where $\Delta$ is only assumed to be complete unimodular, is considered in \cite[Theorem 3.21]{meyer2011intersection}. However, it appears that the argument in its proof is using (without proof) that $\deg(\mathrm{trop}(X)\cdot\mathrm{trop}(X'))=\deg([\mathrm{trop}(X)]\ast[\mathrm{trop}(X')])$ when $\mathrm{trop}(X)$ and $\mathrm{trop}(X')$ are of complementary codimension and intersect transversally and the intersection $\overline{\mathrm{trop}(X)}\cap\overline{\mathrm{trop}(X')}$ is supported on $N_\mathbb R$. This is not obvious to the author, hence we sketch the proof in our special situation for completeness.}
\end{lem}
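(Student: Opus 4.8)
The plan is to prove the equivalent statement that for every subvariety $X\subseteq T_N$ of dimension $k$ one has $\phi\circ\varphi([\overline X])=[\mathrm{trop}(X)]$ in $A_k(N_\mathbb R(\Delta))$; by additivity of both $[\overline{(-)}]$ and $\mathrm{trop}(-)$ over irreducible components (counted with multiplicity) we may assume $X$ is irreducible, so that $\mathrm{trop}(X)$ is connected. Since $X(\Delta)$ is smooth and projective, the intersection pairing $A^k(X(\Delta))\times A^{n-k}(X(\Delta))\to A^n(X(\Delta))=\mathbb Z$ is perfect (the Chow groups being torsion-free, we may work rationally). Transporting it through the ring isomorphisms $\varphi$ (the Fulton--Sturmfels correspondence of Theorem \ref{chow group}) and $\phi$ (Lemma \ref{intersection compatible}), the pairing $(a,b)\mapsto\deg(a\ast b)$ on $A_*(N_\mathbb R(\Delta))$ is perfect as well. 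Hence it suffices to show that $\phi\circ\varphi([\overline X])$ and $[\mathrm{trop}(X)]$ pair identically, via $\ast$, against every class in $A_{n-k}(N_\mathbb R(\Delta))$.

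First I would reduce the test classes to tropical hypersurfaces. The Chow ring of the smooth complete toric variety $X(\Delta)$ is generated in degree one by the classes $[D_\rho]$, and, the ample cone being open and full-dimensional, each $[D_\rho]$ is a difference of two ample classes; by multilinearity $A^k(X(\Delta))$ is therefore spanned by products $[\overline H_1]\cdots[\overline H_k]$ of $k$ ample divisor classes, where each $\overline H_i$ is a hyperplane section attached to a torus-invariant ample divisor $D_i$ as in Lemma \ref{hyperplane}. Using the divisor computation in the proof of Lemma \ref{intersection compatible} together with Lemma \ref{hyperplane} and Remark \ref{lattice length}, one has $\phi\circ\varphi([\overline H_i])=[\mathrm{trop}(H_i)]$, where $\mathrm{trop}(H_i)=|\Delta(n-1)|$ carries the lattice-length weights. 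So the problem reduces to the numerical identity
$$\deg\bigl((\phi\circ\varphi([\overline X]))\ast[\mathrm{trop}(H_1)]\ast\cdots\ast[\mathrm{trop}(H_k)]\bigr)=\deg\bigl([\mathrm{trop}(X)]\ast[\mathrm{trop}(H_1)]\ast\cdots\ast[\mathrm{trop}(H_k)]\bigr)$$
for all such ample families.

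For the left-hand side, since $\phi\circ\varphi$ is a degree-preserving ring isomorphism it equals $\deg([\overline X]\cdot\overline H_1\cdots\overline H_k)$, the degree of $\overline X$ in the associated projective embedding. This is exactly the quantity computed by Lemma \ref{intersect several ample divisors} applied with $X'=T_N$ (so that $\overline{X'}=X(\Delta)$, $C=\mathrm{trop}(X)$ is the single component, and $Z_{\overline C}=\overline X$): it equals $\sum_{u\in\mathrm{trop}(X)}i(u,\mathrm{trop}(X)\cdot\mathrm{trop}(H_1)\cdots\mathrm{trop}(H_k))=\deg(\mathrm{trop}(X)\cdot\prod_i\mathrm{trop}(H_i))$, the total stable-intersection number. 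For the right-hand side, every cycle is rationally equivalent to its recession fan, so $[\mathrm{trop}(X)]=[\rho(\mathrm{trop}(X))]$; since $\ast$ restricts to the stable intersection on fan cycles (Lemma \ref{intersection compatible} and Theorem \ref{chow group}) and each $\mathrm{trop}(H_i)=|\Delta(n-1)|$ is already a fan, the right-hand side equals $\deg(\rho(\mathrm{trop}(X))\cdot\prod_i\mathrm{trop}(H_i))$. Grouping the fans $\mathrm{trop}(H_i)$ into a single fan of complementary dimension and invoking the invariance of the stable-intersection degree under passing to recession fans (Allermann--Rau, cf. Proposition \ref{intersect recession}), this coincides with $\deg(\mathrm{trop}(X)\cdot\prod_i\mathrm{trop}(H_i))$, matching the left-hand side.

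The step I expect to be the main obstacle is the left-hand computation, namely equating the projective degree $\deg([\overline X]\cdot\overline H_1\cdots\overline H_k)$ with the tropical stable-intersection number $\deg(\mathrm{trop}(X)\cdot\prod_i\mathrm{trop}(H_i))$: this is precisely the equality that Meyer's argument used without justification, and its proof rests on the lifting results of the previous section (Theorem \ref{infinite multiple intersection}, packaged as Lemma \ref{intersect several ample divisors}). One must be careful that, with $X'=T_N$, the locus $Z_{\overline C}$ really is all of $\overline X$, so that no intersection contribution escapes to the toric boundary; this is what guarantees that the full projective degree, rather than a proper subset of it, appears on the algebraic side.
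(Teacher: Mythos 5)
Your overall strategy (test $\phi\circ\varphi([\overline X])$ against $[\mathrm{trop}(X)]$ under the perfect intersection pairing, reduce the test classes to products of ample hyperplane sections as in Lemma \ref{hyperplane}, then equate algebraic and tropical degrees) matches the paper's, but the execution of the key degree computation has a genuine gap. You invoke Lemma \ref{intersect several ample divisors} with $X'=T_N$, so that $C=\mathrm{trop}(X)$; but that lemma lives in Situation \ref{higher situation}, which requires $\Delta$ to be a \emph{compactifying fan for $C$}. Lemma \ref{chow ring} makes no such hypothesis: $\Delta$ is an arbitrary smooth projective fan and $X$ an arbitrary subvariety of $T_N$, so the recession cones of the faces of $\mathrm{trop}(X)$ need not be unions of cones of $\Delta$, and Theorem \ref{infinite multiple intersection} (on which Lemma \ref{intersect several ample divisors} rests) simply does not apply. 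The same unjustified compatibility enters on the tropical side: the equality $[\mathrm{trop}(X)]=[\rho(\mathrm{trop}(X))]$ and Proposition \ref{intersect recession} are only available, in the paper's framework, when $\mathrm{trop}(X)$ is compatible with $\Delta$, since otherwise $\rho(\mathrm{trop}(X))$ does not even inherit a balanced structure as a fan built from cones of $\Delta$. The paper's proof circumvents exactly this point: it replaces the hyperplanes $H_i$ by generic torus translates $g_iH_i$, chosen so that $\mathrm{trop}(\alpha),\mathrm{trop}(g_1H_1),\dots,\mathrm{trop}(g_{n-k}H_{n-k})$ intersect properly and so that the intersection of their closures stays inside $N_\mathbb R$ (\cite[Lemma 3.10]{osserman2011lifting}); the degree identity is then the \emph{proper}-intersection lifting theorem \cite[Theorem 5.2.3]{osserman2013lifting}, which needs no compactifying-fan hypothesis on $\mathrm{trop}(\alpha)$. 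The compatibility burden is thereby shifted onto $\mathrm{trop}(g_iH_i)$, a translate of the subfan $|\Delta(n-1)|$, which is automatically compatible with $\Delta$; Lemma \ref{intersection compatible} together with the rational equivalence $[\mathrm{trop}(g_iH_i)]=[\mathrm{trop}(H_i)]$ then converts the stable-intersection degree into the $\ast$-degree. Unless you either reproduce this translation trick or prove a refinement-and-pushforward compatibility allowing you to replace $\Delta$ by a finer fan adapted to $\mathrm{trop}(X)$, your central equality is unsupported.

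A second, smaller omission: your opening claim that the lemma is equivalent to the statement for subvarieties $X\subseteq T_N$ is not justified. The tropicalization map in Lemma \ref{chow ring} is defined on all cycles of $X(\Delta)$, including irreducible subvarieties contained in the toric boundary (via $\mathrm{trop}(\beta'\cap\mathcal O(\tau))$ inside the orbit), and the lemma asserts compatibility for those representatives as well. The paper handles this in the final paragraph of its proof by writing $[\alpha]=\sum a_\sigma[\mathcal V(\sigma)]$ inside $\mathcal V(\tau)\cong X(\mathrm{Star}_\Delta(\tau))$ and reducing to the already-treated case; your argument needs an analogous step.
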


For $[\alpha]\in A_*(X(\Delta))$ by $\mathrm{trop}([\alpha])$ we mean the tropicalization of any representative of $\alpha$ as a sum of irreducible closed subschemes of $X(\Delta)$. More specifically, let $\beta$ be an irreducible closed subscheme of $X(\Delta)$ and $\mathcal V(\tau)\cong X(\mathrm{Star}_\Delta(\tau))$ be the maximal closed torus orbit that contains $\beta$ as a closed subset. Write $[\beta]=a[\beta']$ where $\beta'$ is reduced irreducible, hence a closed subscheme of $\mathcal V(\tau)$. We then define $\mathrm{trop}([\beta])={\mathrm{trop}(\beta'}) $ with multiplicities equal to $a$ times the multiplicities of $\mathrm{trop}(\beta'\cap\mathcal O(\tau))$. 
Note that although our notation is non-standard, the following theorem/corollary does not depend on the choice of representative.

\begin{proof}[Proof of Lemma \ref{chow ring}]
Let $\alpha$ be an reduced irreducible closed subscheme of $X(\Delta)$, we need to show that $\varphi([\alpha])=\phi^{-1}([\mathrm{trop}(\alpha)])$. First assume $\alpha=D_{\rho_\alpha}$ is a torus-invariant divisor, hence $\mathrm{trop}(\alpha)=V(\rho_\alpha)$. We have $\varphi([\alpha])=m_\alpha$ where $m_\alpha(\tau)=\deg([\alpha]\cdot[\mathcal V(\tau)])$ for all $\tau\in\Delta(n-1)$. On the other hand let $f$ be the support function on $\Delta$ defined by $f|_\sigma=\langle\ \cdot\ ,m_\sigma\rangle$ for $\sigma\in\Delta(n)$ and $m_\sigma\in M$ such that $f(u_\rho)=-1$ if $\rho=\rho_\alpha$ and $f(u_\rho)=0$ otherwise, where $u_\rho$ is the lattice generator of $\rho\in\Delta(1)$. Then $f\cdot N_\mathbb R(\Delta)=V(\rho_\alpha)-\overline F$ where $F$ is a subfan of $\Delta$ of codimension one with weights 
$$m'_\alpha(\tau)=\left\{\begin{array}{ccc}0&\mathrm{if}\ \rho_\alpha+\tau\not\in\Delta(n),\\1&\mathrm{if}\ \rho_\alpha+\tau\in\Delta(n),\\-\langle u_{\rho_1},m_{\sigma_2}\rangle&\mathrm{if}\ \rho_\alpha\in\tau.\end{array}\right.$$
where $\sigma_1$ and $\sigma_2$ are the two maximal faces of $\Delta$ that contain $\tau$ and $\rho_i\in\sigma_i(1)\backslash\tau(1)$. It then follows that $\phi^{-1}([\mathrm{trop}(\alpha)])=m'_\alpha$ and one checks directly that $m_\alpha=m'_\alpha$.

Next let $\alpha=\overline H$ be as in Lemma \ref{hyperplane}. We have $[\alpha]=\sum a_\rho D_\rho$ and $[\mathrm{trop}(\alpha)]=\sum a_\rho[V(\rho)]$ (by Remark \ref{lattice length} and looking at the support function defined by $\{m_\sigma\}$ corresponding to $D$ in Lemma \ref{hyperplane}). Thus $\varphi([\alpha])=\phi^{-1}([\mathrm{trop}(\alpha)])$. The same is true for $\alpha=g\overline H$ for any $g\in T_N$.

Now let $\alpha$ be an arbitrary irreducible closed subscheme of $X(\Delta)$ of dimension $k$ that intersects $T_N$. Given ample divisors $\overline H_1,...,\overline H_{n-k}$ as in Lemma \ref{hyperplane}. Take  $g_1,...,g_{n-k}\in T_N$ such that $\mathrm{trop}(\alpha),\mathrm{trop}(g_1H_1)
,$
$...,\mathrm{trop}(g_{n-k}H_{n-k})$ intersect properly, and $\mathrm{trop}( \alpha)\cap\mathrm{trop}(g_1{\overline H_1})\cap\cdots\cap{ \mathrm{trop}(g_{n-k}{\overline H_{n-k}})}={\mathrm{trop}(\alpha)}\cap\mathrm{trop}(g_1{H_1})\cap\cdots\cap \mathrm{trop}({g_{n-k}H_{n-k}})$ (\cite[Lemma 3.10]{osserman2011lifting}). It follows from \cite[Theorem 5.2.3]{osserman2013lifting} that
$$\deg([\alpha]\cdot[ g_1\overline H_1]\cdots[ g_{n-k}\overline H_{n-k}])=\deg(\mathrm{trop}(\alpha\cap T_N)\cdot\mathrm{trop}(g_iH_1)\cdots\mathrm{trop}(g_{n-k}H_{n-k})).$$
By Lemma \ref{intersection compatible} and the fact that $\mathrm{trop}(g_iH_i)$ is rationally equivalent to $\mathrm{trop}(H_i)$, the equation above is equivalent to:
$$\deg([\alpha]\cdot[\overline H_1]\cdots[\overline H_{n-k}])=\deg([\mathrm{trop}(\alpha)]\ast[\mathrm{trop}(\overline H_1)]\ast\cdots\ast[\mathrm{trop}(\overline H_{n-k})]).$$
Since $A_*(X(\Delta))$ is generated by divisors of form $\overline H$ as in Lemma \ref{hyperplane} and $\phi\circ\varphi$ in Theorem \ref{chow group} is a ring isomorphism, this implies $\varphi([\alpha])=\phi^{-1}([\mathrm{trop}(\alpha)])$.

For arbitrary $\alpha$ let $\mathcal V(\tau)$ be the maximal closed orbit that contains $\alpha$. We can write $$[\alpha]=\sum\limits_{\tau\prec\sigma\in\Delta(n-k)}a_\sigma[\mathcal V(\sigma)].$$ The discussion above shows that $[\mathrm{trop}(\alpha)]=\sum a_\sigma [V(\sigma)]$ as tropical cycle classes on $V(\tau)$, hence they are in the same class of cycles on $N_\mathbb R(\Delta)$. It follows that $\varphi([\alpha])=\phi^{-1}([\mathrm{trop}(\alpha)])$.
\end{proof}

We now state the main theorem of this section, which comes as a corollary of Lemma \ref{intersect several ample divisors} when we let $H_i$ vary. Let $\mathrm{trop}(X)\cdot\mathrm{trop}(X')|_{ C}$ be the part of $\mathrm{trop}(X)\cdot\mathrm{trop}(X')$ that is supported on $C$, which is a tropical cycle in $N_\mathbb R$. 

\begin{thm}\label{intersect rationally equivalent}
With the same notation as in Lemma \ref{intersect several ample divisors} we have that $[\mathrm{trop}(i^*_{\overline C}(\overline X\cdot \overline X'))]=[\overline{\mathrm{trop}(X)\cdot\mathrm{trop}(X')|_{ C}}]$ as cycle classes in $A_{k+l-n}(N_\mathbb R(\Delta))$. 
\end{thm}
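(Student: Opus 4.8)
Set $d=k+l-n$, so that both $[\mathrm{trop}(i^*_{\overline C}(\overline X\cdot\overline X'))]$ and $[\mathrm{trop}(X)\cdot\mathrm{trop}(X')|_C]$ are classes of dimension $d$ in $A_d(N_\mathbb R(\Delta))$. The plan is to \emph{not} compare the two cycles directly, but instead to detect their equality by pairing against a spanning family of complementary-dimensional classes and to compute both pairings via Lemma \ref{intersect several ample divisors}. Concretely, under the ring isomorphism $\phi\circ\varphi\colon A_*(X(\Delta))\to A_*(N_\mathbb R(\Delta))$ of Theorem \ref{chow group} and Lemma \ref{chow ring}, which is induced by tropicalization and preserves the degree of zero-cycles, the two classes correspond to codimension-$d$ classes on the smooth projective toric variety $X(\Delta)$; there the intersection pairing $A^{n-d}(X(\Delta))\times A^{d}(X(\Delta))\to\mathbb Z$ is perfect (Poincar\'e duality, which holds over $\mathbb Z$ since the Chow groups of a smooth projective toric variety are free and Kronecker duality holds for complete toric varieties). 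So it suffices to check that the two classes pair identically against every product $[\mathrm{trop}(H_1)]\ast\cdots\ast[\mathrm{trop}(H_d)]$ of hyperplane classes of the type produced in Lemma \ref{hyperplane}.

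To see that these products suffice, I would argue as follows. As $D$ ranges over the $T$-invariant ample divisors, the class $[\overline H]=[D]$ ranges over all ample classes in $\mathrm{Pic}(X(\Delta))$, and every divisor class is a $\mathbb Z$-difference of ample classes (add a large multiple of a fixed ample divisor). Since $A^*(X(\Delta))$ is generated in degree one for a smooth complete toric variety (its Stanley--Reisner presentation), the subgroup generated by all products of $d$ ample classes is exactly $A^{d}(X(\Delta))$. Transporting through $\phi\circ\varphi$, the products $[\mathrm{trop}(H_1)]\ast\cdots\ast[\mathrm{trop}(H_d)]$ span $A_{2n-k-l}(N_\mathbb R(\Delta))$, so matching all such pairings forces equality of the two dimension-$d$ classes.

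It then remains to compute the two pairings. On the algebraic side, writing $[\mathrm{trop}(i^*_{\overline C}(\overline X\cdot\overline X'))]=(\phi\circ\varphi)(i^*_{\overline C}(\overline X\cdot\overline X'))$ and $[\mathrm{trop}(H_i)]=(\phi\circ\varphi)([\overline H_i])$ via Lemma \ref{chow ring}, and using that $\phi\circ\varphi$ is a degree-preserving ring isomorphism, I obtain
$$\deg\bigl([\mathrm{trop}(i^*_{\overline C}(\overline X\cdot\overline X'))]\ast[\mathrm{trop}(H_1)]\ast\cdots\ast[\mathrm{trop}(H_d)]\bigr)=\int_{X(\Delta)}i^*_{\overline C}(\overline X\cdot\overline X')\cdot\overline H_1\cdots\overline H_d,$$
which equals $\sum_{u\in C} i(u,\mathrm{trop}(X)\cdot\mathrm{trop}(X')\cdot\mathrm{trop}(H_1)\cdots\mathrm{trop}(H_d))$ by Lemma \ref{intersect several ample divisors}. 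On the tropical side, each $\mathrm{trop}(H_i)=|\Delta(n-1)|$ is a subfan of $\Delta$, hence compatible with $\Delta$, so Lemma \ref{intersection compatible} applied iteratively identifies the $\ast$-product with the stable intersection. Since $\mathrm{trop}(X)\cdot\mathrm{trop}(X')|_C$ is supported in $C\subset N_\mathbb R$ and $C$ is a full connected component of $\mathrm{trop}(X)\cap\mathrm{trop}(X')$, the germ of $\mathrm{trop}(X)\cdot\mathrm{trop}(X')|_C$ at each $u\in C$ agrees with that of the full stable intersection; as stable-intersection multiplicities are computed from the star at a point, it follows that $i(u,(\mathrm{trop}(X)\cdot\mathrm{trop}(X')|_C)\cdot\prod\mathrm{trop}(H_i))=i(u,\mathrm{trop}(X)\cdot\mathrm{trop}(X')\cdot\prod\mathrm{trop}(H_i))$ for $u\in C$. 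Summing over $u\in C$ then gives that $\deg\bigl([\mathrm{trop}(X)\cdot\mathrm{trop}(X')|_C]\ast[\mathrm{trop}(H_1)]\ast\cdots\ast[\mathrm{trop}(H_d)]\bigr)$ equals the same sum, so the two pairings coincide for every choice of ample hyperplanes.

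I expect the main obstacle to be the duality step of the first two paragraphs: justifying that equality of all these pairings genuinely detects equality of classes in $A_{k+l-n}(N_\mathbb R(\Delta))$ over $\mathbb Z$ rather than merely rationally. This rests on the unimodularity of Poincar\'e duality for the smooth projective $X(\Delta)$ and on the degree-one generation of $A^*(X(\Delta))$ by ample classes. A secondary technical point, which I dispatch by the local star-description of the stable intersection, is checking that restricting the stable intersection to the connected component $C$ commutes with the subsequent stable intersection by the hyperplanes $\mathrm{trop}(H_i)$.
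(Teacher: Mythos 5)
Your proposal is correct and follows essentially the same route as the paper: pair both classes against products $[\mathrm{trop}(H_1)]\ast\cdots\ast[\mathrm{trop}(H_{k+l-n})]$ of tropicalized ample hyperplane sections, compute both pairings via Lemma \ref{intersect several ample divisors} together with Lemma \ref{chow ring} and Lemma \ref{intersection compatible}, and conclude because such classes span the complementary-dimensional Chow group. The only difference is one of explicitness: where the paper simply says that cycles of the form $\mathrm{trop}(H)$ generate $A_*(N_\mathbb R(\Delta))$, you spell out the underlying justification (Kronecker/Poincar\'e duality over $\mathbb Z$ for the smooth projective toric variety $X(\Delta)$, degree-one generation of its Chow ring, and ample classes spanning $\mathrm{Pic}$), which is a welcome filling-in of the paper's terse final step rather than a different argument.
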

\begin{proof}
According to Lemma \ref{chow ring} and Lemma \ref{intersect several ample divisors} and Lemma \ref{intersection compatible} we have:
\begin{equation*}\begin{split}\deg([\mathrm{trop}(i^*_{\overline C}(\overline X\cdot \overline X'))]\ast [\mathrm{trop}(\overline H_1)]\ast\cdots\ast[\mathrm{trop}(\overline H_{k+l-n})])\ \ \ \ \ \ \ \\
=\deg([\overline{\mathrm{trop}(X)\cdot\mathrm{trop}(X')|_C}]\ast [\mathrm{trop}(\overline H_1)]\ast\cdots\ast[\mathrm{trop}(\overline H_{k+l-n})]).
\end{split}
\end{equation*}
Since the cycles of the form $\mathrm{trop}(\overline H)$ generate $A_*(N_\mathbb R(\Delta))$ where $H$ is defined as in Lemma \ref{hyperplane}, we got the desired conclusion. 
\end{proof}

\begin{rem}
When $\mathrm{trop}(X)$ and $\mathrm{trop}(X')$ intersect properly taking $\Delta=\{0\}$ is enough. It follows from \cite[Corollary 5.1.2]{osserman2013lifting} that we actually have $\mathrm{trop}(i^*_{ C}( X\cdot  X'))=\mathrm{trop}(X)\cdot\mathrm{trop}(X')|_{ C}$ as tropical cycles in $N_\mathbb R$.
\end{rem}

\begin{cor}\label{compact stable equality}
With the same notation as in Lemma \ref{intersect several ample divisors} we have:
$$\deg(\mathrm{trop}(i^*_{\overline C}(\overline X\cdot\overline X'))\cdot _c \overline F))=\deg(\overline{\mathrm{trop}(X)\cdot\mathrm{trop}(X')|_C}\cdot_c \overline F),$$
or equivalently 
$$\deg(\mathrm{trop}(i^*_{\overline C}(\overline X\cdot\overline X'))\cdot _c \overline F))=\deg(\mathrm{trop}(X)\cdot\mathrm{trop}(X')|_C\cdot  F)$$
for all representatives of $i^*_{\overline C}(\overline X\cdot\overline X')$ and $F\subset N_\mathbb R$ such that $F$ is compatible with $\Delta$.
\end{cor}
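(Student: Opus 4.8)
The plan is to obtain the corollary as a formal consequence of Theorem~\ref{intersect rationally equivalent} and the compatibility of $\cdot_c$ with $\ast$ recorded in Lemma~\ref{intersection compatible}, using that degrees of zero-cycles are rational-equivalence invariants on $N_\mathbb R(\Delta)$. Write $\alpha:=\mathrm{trop}(i^*_{\overline C}(\overline X\cdot\overline X'))$ and $\beta:=\mathrm{trop}(X)\cdot\mathrm{trop}(X')|_C$; both are cycles of dimension $k+l-n$ in $N_\mathbb R(\Delta)$, and since $\beta$ is supported on $C\subset N_\mathbb R=O(0)$, its compactified stable intersection with $F$ is just the ordinary stable intersection written $\beta\cdot F$ in the statement. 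I take $F$ compatible with $\Delta$ and of complementary dimension $2n-k-l$, so that both products are zero-dimensional and their degrees make sense.

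First I would pass to Chow classes. Decomposing $\alpha=\sum_{\tau\in\Delta}\alpha_\tau$ with $\alpha_\tau\subseteq O(\tau)$ and applying Lemma~\ref{intersection compatible} to each summand against $F$, then summing by bilinearity of $\ast$ and by the linear extension of $\cdot_c$ over the strata, I get $[\alpha\cdot_c F]=[\alpha]\ast[F]$ in $A_0(N_\mathbb R(\Delta))$; applying the same lemma to $\beta\subseteq O(0)$ gives $[\beta\cdot F]=[\beta]\ast[F]$. Now Theorem~\ref{intersect rationally equivalent} asserts exactly $[\alpha]=[\beta]$, and since $\ast$ descends to Chow classes this forces $[\alpha]\ast[F]=[\beta]\ast[F]$, whence $[\alpha\cdot_c F]=[\beta\cdot F]$. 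Taking degrees and invoking the fact that rational equivalence preserves degrees of zero-cycles then yields $\deg(\alpha\cdot_c F)=\deg(\beta\cdot F)$, which is the claim.

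The proof is thus essentially bookkeeping once Theorem~\ref{intersect rationally equivalent} is available, so I do not expect a genuine obstacle so much as two points requiring care. The first is the linear extension of Lemma~\ref{intersection compatible}, which is stated for a cycle living in a single orbit, to the full boundary cycle $\alpha$ spread across the various $O(\tau)$; this is routine but must be recorded, since $\alpha$ generically meets several strata. The second is independence of the chosen representative of $i^*_{\overline C}(\overline X\cdot\overline X')$, which is built into the statement: this holds because any two representatives have the same image under $\mathrm{trop}$ as classes in $A_*(N_\mathbb R(\Delta))$, so the equivalence class $[\alpha]$, hence $[\alpha]\ast[F]$ and its degree, are unchanged.
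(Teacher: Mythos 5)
Your proof is correct and is essentially the argument the paper intends: the corollary is stated there without a separate proof precisely because it follows formally from Theorem \ref{intersect rationally equivalent}, Lemma \ref{intersection compatible} (extended linearly over the strata $O(\tau)$), and the fact that rational equivalence preserves degrees of zero-cycles. The two points of care you flag --- the stratum-by-stratum extension of Lemma \ref{intersection compatible} and representative-independence via the isomorphism of Lemma \ref{chow ring} --- are exactly the bookkeeping the paper leaves implicit.
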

Note that the corollary is not necessarily true if we ignore the compatibility condition for $F$ and use the stable intersection on $N_\mathbb R$ for the left hand side of the equation.

\begin{ex}
Let $X=X'$ be planes in $T^3$ defined by the equation $x+y+z+1=0$. Then $\mathrm{trop}(X)=\mathrm{trop}(X')$ is the standard tropical plane $P$ in $\mathbb R^3=\mathbb R e_x+\mathbb R e_y+\mathbb R e_z$ where $e_x,e_y,e_z\in N$ are dual to $x,y,z\in M$. The intersection of the tropicalizations has only one component $P$ and the stable intersection $\mathrm{trop}(X)\cdot\mathrm{trop}(X')$ is the standard tropical line in $\mathbb R^3$. Let $\Delta$ be the complete fan whose 2-skeleton agrees with $P$, then $X(\Delta)=\mathbb P^3$ and $\overline X\cdot\overline X'$ is a projective line in $\overline X$. First let $F$ be the plane $\mathbb R e_y+\mathbb R e_z$. Take a representative of $\alpha=i^*_{\overline P}(\overline X\cdot\overline X')$ by the line $x=a, y+z+1-a=0$ with $\mathrm{val}(a)=\mathrm{val}(1-a)=0$. Then $\mathrm{trop}(\alpha\cap T^3)$ is the standard tropical line in $F$. We have $$\deg(\mathrm{trop}(\alpha\cap T^3)\cdot F))=0\neq 1=\deg(\mathrm{trop}(X)\cdot\mathrm{trop}(X')|_P\cdot F).$$
On the other hand If we take $F=P$ then one checks easily that both sides of the equation in Corollary \ref{compact stable equality} are equal to 1.
\end{ex}


\section{Lifting within Nontoric Ambient Spaces}
In this section we assume $X$ and $X'$ are of complementary codimension in a subscheme $Y$ of an algebraic torus, which is not necessarily smooth. Even if $Y$ is smooth, there is no obvious condition for $\Delta$ such that the closure $\overline Y$ in $X(\Delta)$ is still smooth. In this case the intersection cycle $\overline X\cdot\overline X'$ in $\overline Y$ is not well-defined, however the intersection multiplicity is still valid at isolated points of $\overline X\cap\overline X'$ at which $\overline Y$ is regular, and indeed it is possible to choose $\Delta$ such that $\overline Y$ is at least smooth at points of $\overline X\cap\overline X'$ that we are interested in.

\begin{sit}\label{reduced ambient space}
Assume $Y$ is a reduced closed subscheme of $T_N$ of pure dimension $d$. Let $X$ and $X'$ be closed subschemes of $Y$ of pure dimensions $k$ and $l$ such that $k+l=d$. Let $C$ be a connected component of $\mathrm{trop}(X)\cap\mathrm{trop}(X')$ that is contained in the relative interior of a maximal face $\iota$ of $\mathrm{trop}(Y)$ of multiplicity one. Let $W_\mathbb R$ be the affine subspace in $N_\mathbb R$ of dimension $d$ parallel to $\iota$ and $W=W_\mathbb R\cap N$ the corresponding sublattice. Let $\Delta$ be a unimodular fan contained in $W_\mathbb R$ which is a compactifying fan for $\mathrm{trop}(Z)\cap\mathrm{trop}(Z')\cap C$, where $Z$ and $Z'$ run over all irreducible components of $X$ and $X'$ respectively. Assume there are finitely many points of $\overline X\cap\overline X'$ which tropicalize to $\overline C$, in other words $Z_{\overline C}$ is a finite set.
\end{sit}

We first check that in the above situation $\overline Y$ is smooth at points which tropicalize to $\overline C$.
Let $X(\Delta)$ be as before and $Y(\Delta)$ the toric variety associated to $\Delta$ where $\Delta$ is considered as a fan in $W_\mathbb R$. Note that $X(\Delta)=Y(\Delta)\times T^{n-d}$ and $N_\mathbb R(\Delta)=W_\mathbb R(\Delta)\times \mathbb R^{n-d}$. We denote by $\pi\colon X(\Delta)\rightarrow Y(\Delta)$ and $\widetilde\pi\colon N_\mathbb R(\Delta)\rightarrow W_\mathbb R(\Delta)$ the projections, which are induced by a splitting of $N\rightarrow N/W$, and $\mathrm{trop}_Y\colon Y(\Delta)\rightarrow W_\mathbb R(\Delta)$ the tropicalization map on $Y(\Delta)$. Take $P\subset \mathrm{relint}(\iota)$ an integral $G$-affine polyhedron such that $\rho(P)\in \Delta$ is a face. Let $P'$ be the image of $P$ in $W_\mathbb R$ under the projection $\widetilde\pi$. 

As in \cite[$\S$4.29]{baker2011nonarchimedean} we let $\mathcal U^P$ be the preimage of $\overline P$ under the map $\mathrm{trop}\colon X(\Delta)^{\mathrm{an}}\rightarrow N_\mathbb R(\Delta)$. This is the same as the preimage under $\mathrm{trop}\colon X(\rho(P))^{\mathrm{an}}\rightarrow N_\mathbb R(\rho(P))$, hence according to \cite[Proposition 6.9]{rabinoff2012tropical} $\mathcal U^P$ is an affinoid domain in $X(\Delta)^{\mathrm{an}}$ whose ring of global sections is integral. Similarly we define $\mathcal U^{P'}$ to be the preimage of $\overline {P'}$ under $\mathrm{trop}_Y\colon Y(\Delta)^{\mathrm{an}}\rightarrow W_\mathbb R(\Delta)$, which is an affinoid domain in $Y(\Delta)^{\mathrm{an}}$. Let $\mathcal Y^P=\overline Y^{\mathrm{an}}\cap \mathcal U^P$ and $\pi_P\colon \mathcal Y^P\rightarrow \mathcal U^{P'}$ the map induced by the projection. 

\begin{lem}\label{projection isomorphism}
$\pi_P$ is an isomorphism.
\end{lem}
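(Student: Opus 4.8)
The plan is to exhibit $\pi_P$ as a finite, birational morphism onto a normal affinoid and then invoke the analytic form of Zariski's main theorem. The geometric input that makes everything work is that $\iota$ is parallel to $W_\mathbb R$: since $P\subset\mathrm{relint}(\iota)$ lies in a single coset $v_0+W_\mathbb R$ and $\widetilde\pi$ is the identity on $W_\mathbb R$ with kernel the $\mathbb R^{n-d}$ factor, the restriction $\widetilde\pi|_{v_0+W_\mathbb R}$ is injective; hence $\widetilde\pi$ carries $P$ affine-isomorphically onto $P'$, and (because $\rho(P)\subset W_\mathbb R$ is fixed by $\widetilde\pi$, so $\rho(P')=\rho(P)$) it extends to a homeomorphism $\overline P\xrightarrow{\sim}\overline{P'}$ respecting the stratifications by $O(\tau)$ for $\tau\preceq\rho(P)$. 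Thus $\pi_P$ is compatible with tropicalization over this homeomorphism. I also record the two integrality inputs I will need: $\mathcal U^{P'}$ is an affinoid domain in the \emph{smooth} toric variety $Y(\Delta)^{\mathrm{an}}$ (smooth since $\Delta$ is unimodular), hence normal and integral; and $\mathcal Y^P$ is a closed analytic subspace of the affinoid $\mathcal U^P$, which by \cite[Proposition 6.9]{rabinoff2012tropical} has integral ring of global sections, so $\mathcal Y^P$ is reduced (as $\overline Y$ is the closure of the reduced scheme $Y$).

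Next I would establish that $\pi_P$ is finite. Properness is tropical in nature: $\widetilde\pi$ maps the compact set $\overline P$ bijectively onto $\overline{P'}$, so $\mathcal Y^P=\mathrm{trop}^{-1}(\overline P)\cap\overline Y^{\mathrm{an}}$ is Zariski closed in $\mathcal U^P$ and proper over $\mathcal U^{P'}$, arguing exactly as in the properness results of \cite[\S 4]{osserman2011lifting}. Quasi-finiteness is a dimension count, $\dim\mathcal Y^P=\dim\mathcal U^{P'}=d$, together with the fact that $\widetilde\pi|_{\overline P}$ is injective so that $\pi$ contracts no positive-dimensional subset of $\overline Y$ lying over a single point. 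A proper, quasi-finite morphism of good Berkovich spaces is finite, which gives the claim.

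The heart of the matter, and the step I expect to be the main obstacle, is to show that $\pi_P$ has degree one. Here I would use the multiplicity-one hypothesis on $\iota$ from Situation \ref{reduced ambient space} together with the dictionary between tropical multiplicities and initial degenerations. Fix $u\in\mathrm{relint}(P)\subset\mathrm{relint}(\iota)$. Since $\iota$ is a maximal face whose star at $u$ is the full linear space $W_\mathbb R$, the initial degeneration $\mathrm{in}_u(\overline Y)$ is a $d$-dimensional subscheme of the torus over the residue field that is invariant under the subtorus with cocharacter lattice $W$, hence a union of cosets $T_W\times\{\bar a\}$ of $T_W$ inside $T_N=T_W\times T_{N''}$; by definition $m_\iota$ is the number of these cosets counted with multiplicity (see \cite[\S 3.4]{maclagan2015introduction}). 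Because $\pi$ restricts to an isomorphism on each coset $T_W\times\{\bar a\}\to T_W$ (the induced map on $W$ is the identity, so the lattice index is one), the degree of the finite map $\pi_P$ equals $m_\iota$. The hypothesis $m_\iota=1$ then forces $\mathrm{in}_u(\overline Y)$ to be a single reduced coset and $\pi$ to induce an isomorphism $\mathrm{in}_u(\overline Y)\xrightarrow{\sim}T_W$ of initial degenerations; this is precisely the local statement, attributed to Payne in the introduction, that $\overline Y^{\mathrm{an}}$ is a $d$-dimensional analytic torus near a simple point of $\iota$.

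Finally I would assemble the pieces. The degree computation shows $\pi_P$ is a finite, birational morphism from the reduced affinoid $\mathcal Y^P$ onto the normal affinoid $\mathcal U^{P'}$, so by the analytic analogue of Zariski's main theorem it is an isomorphism; alternatively one can run the canonical formal models over the valuation ring whose generic fibres are $\mathcal Y^P$ and $\mathcal U^{P'}$, note that $\pi_P$ extends to a morphism of these models whose reduction is the coset isomorphism $\mathrm{in}_u(\overline Y)\xrightarrow{\sim}T_W$ computed above, and lift an isomorphism on reductions to one on generic fibres using flatness. The one point requiring care is that the isomorphism must extend across the boundary strata $O(\tau)$, $\tau\preceq\rho(P)$, but this follows by repeating the coset analysis on $\mathrm{Star}(\rho(P),\cdot)$, using that $\rho(P)\in\Delta$ and that tropical multiplicity is preserved under passing to the star, so the boundary faces inherit multiplicity one.
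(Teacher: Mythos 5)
Your overall strategy coincides with the paper's: both proofs reduce the lemma to showing that $\pi_P$ is finite of degree one onto $\mathcal U^{P'}$, record that $\mathcal U^{P'}$ is normal and integral (normality from unimodularity of $\Delta$, i.e.\ quasi-smoothness of $Y(\Delta)^{\mathrm{an}}$) and that $\mathcal Y^P$ is reduced (from reducedness of $\overline Y$), and then conclude by the standard argument that a finite degree-one morphism onto a normal integral affinoid is an isomorphism. The difference is in how the key input ``finite of pure degree one'' is obtained. The paper simply cites \cite[Theorem 4.30, Corollary 4.32]{baker2011nonarchimedean} (with a footnote that the argument there, written for $\Delta=\{0\}$ and $P$ a polytope, extends to $\Delta=\rho(P)$); the multiplicity-one hypothesis on $\iota$ enters exactly through Corollary 4.32, which identifies the pure degree of $\pi_P$ with $m_\iota$. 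You instead attempt to re-prove this input: your degree computation via initial degenerations (the components of $\mathrm{in}_u(\overline Y)$ are cosets of $T_W$, their multiplicities sum to $m_\iota$, and $\pi$ is an isomorphism on each coset) is indeed the idea behind the cited result, and your formal-model alternative is essentially how one makes it rigorous. A minor point: integrality of $\mathcal U^{P'}$ does not follow from smoothness alone (an affinoid domain in a smooth irreducible space can be disconnected); as in the paper's setup, one needs the Rabinoff-type statement \cite[Proposition 6.9]{rabinoff2012tropical} for preimages of polyhedra to get connectedness, hence irreducibility.

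The genuine gap is in your finiteness step. In Berkovich geometry, properness is not topological properness: one must show that the relative boundary $\partial(\mathcal Y^P/\mathcal U^{P'})$ is empty, and neither compactness of $\mathcal Y^P$, nor its Zariski-closedness in $\mathcal U^P$, nor quasi-finiteness addresses this. The ambient projection $\mathcal U^P\rightarrow\mathcal U^{P'}$ is itself \emph{not} proper: its fiber is a polyannulus of the form $\{|x_j|=e^{-c_j}\}$, every point of which is a boundary point over a point, so no boundarylessness can be inherited from the ambient map. Moreover, the properness results of \cite[\S 4]{osserman2011lifting} that you invoke concern properness \emph{over the ground field} $K$ of spaces $\overline X^{\mathrm{an}}\cap\mathrm{trop}^{-1}(\overline{|\mathcal P|})$, under the hypothesis that the tropicalization meets the topological interior of $\overline{|\mathcal P|}$ in $N_\mathbb R(\Delta)$; they give no relative properness over an affinoid base, and indeed $\mathcal Y^P$ is not proper over $K$ at all when $d<n$ (it is isomorphic to the affinoid $\mathcal U^{P'}$, which has nonempty boundary). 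What is actually needed is, for instance, that $\mathcal Y^P$ is open and closed in $\overline Y^{\mathrm{an}}\cap\pi^{-1}(\mathcal U^{P'})$ --- which requires separating $\overline P$ from the rest of $\mathrm{trop}(\overline Y)\cap\widetilde\pi^{-1}(\overline{P'})$, including along the strata at infinity, where closures of disjoint faces can meet --- so that boundarylessness can be inherited from the boundaryless map $\overline Y^{\mathrm{an}}\cap\pi^{-1}(\mathcal U^{P'})\rightarrow\mathcal U^{P'}$. This separation-and-boundary analysis is precisely the technical content of \cite[Theorem 4.30]{baker2011nonarchimedean} that the paper's citation supplies and that your sketch leaves unproved.
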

\begin{proof}
Assume $\mathcal Y^P=\mathcal M(\mathcal B)$ and $\mathcal U^{P'}=\mathcal M(\mathcal A)$ where $\mathcal A$ is integral. According to Theorem 4.30 and Corollary 4.32 of \cite{baker2011nonarchimedean} $\pi_P$ is a finite morphism of pure degree one in the sense of \cite[\S 3.15]{baker2011nonarchimedean}.\footnote{In \cite[Theorem 4.30]{baker2011nonarchimedean} the conclusion is proved for $\Delta=\{0\}$ and $P$ a polytope which has trivial recession cone, their argument still works in our case where $\Delta=\rho(P)$.
} Look at the induced map $\phi\colon \mathrm{Spec}(\mathcal B)\rightarrow \mathrm{Spec}(\mathcal A)$, which is also finite of pure degree one, so $\mathrm{Spec}(\mathcal B)$ is irreducible. We first claim that $\mathrm{Spec}(\mathcal B)$ is integral. Indeed, since $\overline Y$ is reduced, we know $\overline Y^\mathrm{an}$ is reduced, hence so is $\mathcal Y^P$ as an affinoid domain in $\overline Y^\mathrm{an}$, therefore $\mathrm{Spec}(\mathcal B)$ is reduced.


On the other hand, since $Y(\Delta)^{\mathrm{an}}$ is quasi-smooth, so is $\mathcal U^{P'}$, in particular $\mathrm{Spec}(\mathcal A)$ is smooth, and hence $\mathcal A$ is an integral normal ring. Now we have a finite morphism of degree one between integral domains whose source is normal, it must be an isomorphism. Therefore, $\pi_P$ is an isomorphism.
\end{proof}

Note that Lemma \ref{projection isomorphism} generalizes easily to the case where $P=|\mathcal P|$ for a finite collection $\mathcal P$ of integral $G$-affine polyhedra in $\mathrm{relint}(\iota)$ of which $\Delta$ is a compactifying fan. 
In this case $\mathcal Y^P$ as an analytic domain in $\overline Y^{\mathrm{an}}$ is quasismooth, hence $\overline Y^{\mathrm {an}}$ is regular at points tropicalize to $\overline P$, therefore $\overline Y$ is regular at points tropicalize to $\overline P$. We now have a well-defined intersection multiplicity $i(x,\overline X\cdot\overline X',\overline Y)$ for $x$ such that $\mathrm{trop}(x)\in\overline C$, and are able to state the theorem for non-toric ambient space:

\begin{thm}\label{lifting reduced ambient space}
In situation \ref{reduced ambient space} we have 
$$\sum_{x\in Z_{\overline C}} i(x,\overline X\cdot\overline X';\overline Y)=\sum_{u\in C} i(u,\mathrm{trop}(X)\cdot\mathrm{trop}(X');\mathrm{trop}(Y)).$$
\end{thm}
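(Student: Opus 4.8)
The plan is to use the local isomorphism of Lemma \ref{projection isomorphism} to replace the (possibly singular, non-toric) ambient space $Y$ by the $d$-dimensional torus $T_W$, thereby reducing the statement to \cite[Theorem 6.4]{osserman2011lifting}. First I would choose a finite collection $\mathcal P$ of integral $G$-affine polyhedra contained in $\mathrm{relint}(\iota)$ such that $\Delta$ is a compactifying fan for $\mathcal P$ and $\mathrm{trop}(X)\cap\mathrm{trop}(X')\cap|\mathcal P|=C$. Setting $P=|\mathcal P|$ and $P'=\widetilde\pi(P)\subset W_\mathbb R$, the generalization of Lemma \ref{projection isomorphism} recorded right after its proof gives an isomorphism $\pi_P\colon \mathcal Y^P\xrightarrow{\sim}\mathcal U^{P'}$ of affinoid domains, where $\mathcal U^{P'}\subset Y(\Delta)^{\mathrm{an}}$ and $Y(\Delta)$ is the $d$-dimensional toric variety of $\Delta\subset W_\mathbb R$ with dense torus $T_W$. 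Since $C\subset\mathrm{relint}(\iota)$ and $\iota$ is parallel to $W_\mathbb R$, the restriction of $\widetilde\pi$ to (the translate of $W_\mathbb R$ containing) $\iota$ is an affine injection, so $\widetilde\pi$ transports the tropical data over $C$ to its image $\widetilde C=\widetilde\pi(C)\subset W_\mathbb R$ without collapsing dimensions.

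Next I would produce the torus-model subschemes. Under $\pi_P$ the restrictions $\overline X^{\mathrm{an}}\cap\mathcal Y^P$ and $\overline X'^{\mathrm{an}}\cap\mathcal Y^P$ map isomorphically onto analytic subspaces of $\mathcal U^{P'}$, and using compatibility of $\mathrm{trop}$ with the projections $\pi$ and $\widetilde\pi$ I would identify these with the restrictions to $\mathcal U^{P'}$ of $\overline{\widetilde X}^{\mathrm{an}}$ and $\overline{\widetilde X'}^{\mathrm{an}}$ for closed subschemes $\widetilde X,\widetilde X'\subset T_W$ of pure dimensions $k,l$ with $k+l=d$, whose tropicalizations agree with $\widetilde\pi(\mathrm{trop}(X))$ and $\widetilde\pi(\mathrm{trop}(X'))$ near $\widetilde C$. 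Thus $\widetilde X,\widetilde X'$ have complementary codimension in $T_W$ and $\widetilde C$ is a connected component of $\mathrm{trop}(\widetilde X)\cap\mathrm{trop}(\widetilde X')$.

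Then I would match the two sides term by term. On the algebraic side, $\pi_P$ identifies $Z_{\overline C}$ with the (finite) set of points of $\overline{\widetilde X}\cap\overline{\widetilde X'}$ tropicalizing to $\overline{\widetilde C}$; since the intersection multiplicity at an isolated point where the ambient space is smooth is a local invariant preserved by the isomorphism (and equals the analytic intersection multiplicity, using the regularity of $\overline Y$ at points of $Z_{\overline C}$ established just before the theorem), we obtain $\sum_{x\in Z_{\overline C}} i(x,\overline X\cdot\overline X';\overline Y)=\sum_{x} i(x,\overline{\widetilde X}\cdot\overline{\widetilde X'};T_W)$. On the tropical side, the multiplicity-one hypothesis on $\iota$ forces $\mathrm{trop}(Y)_u=W_\mathbb R$ with weight one for every $u\in C$, so $i(u,\mathrm{trop}(X)\cdot\mathrm{trop}(X');\mathrm{trop}(Y))$ is the stable-intersection multiplicity of $\mathrm{trop}(X)_u$ and $\mathrm{trop}(X')_u$ computed inside $W_\mathbb R$; the injection $\widetilde\pi|_\iota$ then identifies this with $i(\widetilde\pi(u),\mathrm{trop}(\widetilde X)\cdot\mathrm{trop}(\widetilde X'))$, so the right-hand side becomes the stable-intersection count for $\widetilde X,\widetilde X'$ supported on $\widetilde C$. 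Applying \cite[Theorem 6.4]{osserman2011lifting} to $\widetilde X,\widetilde X'$ in $T_W$ and the component $\widetilde C$ equates these two counts and yields the theorem.

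The main obstacle I expect is the middle step: genuinely realizing the transported analytic subspaces $\pi_P(\overline X^{\mathrm{an}}\cap\mathcal Y^P)$ and $\pi_P(\overline X'^{\mathrm{an}}\cap\mathcal Y^P)$ as the relevant parts of honest algebraic subschemes $\widetilde X,\widetilde X'$ of $T_W$ to which \cite[Theorem 6.4]{osserman2011lifting} applies, and checking that the compactification data $(\Delta,\mathcal P)$ and the finiteness of $Z_{\overline C}$ transport to legitimate hypotheses for that theorem. Because $\pi_P$ is only a local (affinoid) isomorphism, I would need to argue that only the local analytic data near the finitely many points of $Z_{\overline C}$ and near $C$ enters both the intersection multiplicities and the stable intersection, so that the algebraic model is irrelevant away from $\mathcal U^{P'}$; equivalently, that \cite[Theorem 6.4]{osserman2011lifting} may be invoked after replacing $\widetilde X,\widetilde X'$ by any algebraic subschemes of $T_W$ agreeing with them over $\overline{P'}$.
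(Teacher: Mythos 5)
Your high-level strategy coincides with the paper's: localize near $\overline C$ via Lemma \ref{projection isomorphism}, transport everything to the $d$-dimensional torus $T_W$, and invoke \cite[Theorem 6.4]{osserman2011lifting}. But the step you yourself flag as the ``main obstacle'' --- producing honest algebraic subschemes $\widetilde X,\widetilde X'\subset T_W$ to which that theorem applies --- is precisely where the content of the proof lies, and your proposal does not close it. The paper's solution is to fix a splitting of $N\rightarrow N/W$, let $\pi\colon T_N\rightarrow T_W$ be the induced algebraic projection, and take $\widetilde X,\widetilde X'$ to be the scheme-theoretic images $\pi(X),\pi(X')$. The catch is that $\pi$ is global: parts of $X$ and $X'$ lying far from $C$ (including in other faces of $\mathrm{trop}(Y)$) can project into the window $\mathcal U^{P'}$, in which case $\widetilde X^{\mathrm{an}}\cap\mathcal U^{P'}$ is strictly larger than the transported copy of $\mathcal X^P$, the image $\widetilde C$ of $C$ fails to be a connected component of $\mathrm{trop}(\widetilde X)\cap\mathrm{trop}(\widetilde X')$, and spurious points of $\overline{\widetilde X}\cap\overline{\widetilde X'}$ tropicalize to $\overline{\widetilde C}$. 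Your fallback --- that \cite[Theorem 6.4]{osserman2011lifting} can be invoked for ``any algebraic subschemes agreeing with them over $\overline{P'}$'' --- does not repair this, because the hypotheses of that theorem (that $\widetilde C$ be a connected component of the tropical intersection, with a suitable compactifying fan) are global conditions on the models, not conditions over $\overline{P'}$. The paper avoids the collision by choosing the splitting generically so that $\widetilde\pi^{-1}(\widetilde\pi(C))\cap\mathrm{trop}(X)=\widetilde\pi^{-1}(\widetilde\pi(C))\cap\mathrm{trop}(X')=C$, together with a thickened version of this condition, and such a splitting exists only under a dimension restriction: the paper first treats the case $\dim C\leq\min\{k-1,l-1\}$.

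That leaves the genuinely missing piece: when $\dim C$ is large --- e.g.\ a component of dimension $\min\{k,l\}$, which occurs already in self-intersection-type situations --- no choice of projection works, and the paper must first stabilize, replacing $T_N$ by $T_N\times G_{\mathrm m}^2$, $Y$ by $Y\times G_{\mathrm m}^2$, $X$ by $X\times G_{\mathrm m}\times\{1\}$, and $X'$ by $X'\times\{1\}\times G_{\mathrm m}$; this raises $k$ and $l$ by one while leaving $C$ and all intersection multiplicities unchanged (by the projection formula), so that the generic-projection case applies. Nothing in your proposal plays this role. A second, smaller gap: on the tropical side you assert that injectivity of $\widetilde\pi|_\iota$ identifies the stable-intersection multiplicities, but $\mathrm{trop}(\widetilde X)$ carries multiplicities determined by the algebraic model $\widetilde X=\overline{\pi(X)}$, and one must prove these agree with those of $\mathrm{trop}(X)$ on faces contained in $P$; the paper does this by showing the relevant projection maps have degree one and applying \cite[Corollary 4.32]{baker2011nonarchimedean}, which is exactly where the analytic isomorphism $\varphi$ is used quantitatively rather than merely set-theoretically.
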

\begin{proof}
We show by passing to analytic spaces that in the above equality, we can replace $Y$ with $T_W$ and replace $X$ and $X'$ with subschemes of $T_W$.
Since intersection multiplicities and tropicalizations are additive with respect to cycles, we may assume both $X$ and $X'$ are reduced. First assume $\dim (C)\leq\min\{k-1,l-1\}$. Choose a splitting of $N\rightarrow N/W$ such that the corresponding projection yields $$\widetilde \pi^{-1}(\widetilde\pi(C))\cap \mathrm{trop}(X)=\widetilde \pi^{-1}(\widetilde\pi(C))\cap \mathrm{trop}(X')=C.$$
Take also a $\Delta$-thickening $\mathcal P$ of $C$ such that $P=|\mathcal P|\cap \mathrm{trop}(Y)$ is contained in $\mathrm{relint}(\iota)$ and that $$\widetilde \pi^{-1}(\widetilde\pi(P))\cap \mathrm{trop}(X)=
\widetilde \pi^{-1}(\widetilde\pi(P))\cap \mathrm{trop}(X)\cap\iota$$$$ 
\widetilde \pi^{-1}(\widetilde\pi(P))\cap \mathrm{trop}(X')=
\widetilde \pi^{-1}(\widetilde\pi(P))\cap \mathrm{trop}(X')\cap\iota.$$
Let $\widetilde X$ and $\widetilde X'$ be the scheme theoretic image of $X$ and $X'$ under $\pi$ in $Y(\Delta)$, which can be identified as the closure of $\pi(X)$ and $\pi(X')$ in $Y(\Delta)$ where $\pi(X)$(resp. $\pi(X')$) is the scheme theoretic image of $X$ (resp. $X'$) in $T_W$. 
Let $\mathcal X^P=\overline X^{\mathrm{an}}\cap\mathcal U^P$ and $\mathcal X'^P=\overline X'^{\mathrm{an}}\cap \mathcal U^P$, let $\widetilde{\mathcal X}^{P'}=\widetilde X^{\mathrm{an}}\cap\mathcal U^{P'}$ and $\widetilde{\mathcal X}'^{P'}=\widetilde X'^{\mathrm{an}}\cap\mathcal U^{P'}$. We then have $\varphi\colon \mathcal X^P\rightarrow\widetilde{\mathcal X}^{P'}$ induced by the projection. To show that $\varphi$ is an isomorphism. 

We can assume $P$ is a polyhedron and $\rho(P)\in \Delta$.
First check that $\varphi $ is surjective. Since $X\xrightarrow{\pi} \widetilde X$ is dominant, the induced $ \overline X^{\mathrm{an}}\xrightarrow{\pi^{\mathrm{an}}} \widetilde X^{\mathrm{an}}$ is also dominant. There exists a $\Delta$-thickening $\mathcal Q'\subset \mathrm{relint}(\iota')$ of $P'$, such that 
$\widetilde \pi^{-1}(|\mathcal Q'|)\cap \mathrm{trop}(X)=|\mathcal Q|\cap\mathrm{trop}(X)$, where $\mathcal Q$ is the preimage under $\widetilde \pi$ of $\mathcal Q'$ in $\iota$.
 It follows that the map
$$
\overline X^{\mathrm{an}}\cap \mathrm{trop}^{-1}(\widetilde\pi^{-1}(\overline{|\mathcal Q'|}^\circ)\cap\overline{|\mathcal Q|})
=\overline X^{\mathrm{an}}\cap \mathrm{trop}^{-1}(\widetilde\pi^{-1}(\overline{|\mathcal Q'|}^\circ))
\xrightarrow{\pi^{\mathrm{an}}} \widetilde X'^{\mathrm{an}}\cap\mathrm{trop}_Y^{-1}(\overline{|\mathcal Q'|}^\circ)$$
is dominant. As $\pi_{\mathcal Q}$ is isomorphism, the image of the map above is closed, hence the map is surjective, it follows that $\varphi$ is surjective. On the other hand since $\pi_P$ is isomorphism, $\varphi$ is a closed immersion, also the same argument as in Lemma \ref{projection isomorphism} shows that both $\mathcal X^P$ and $\widetilde{\mathcal X}^{P'}$ are reduced, so $\varphi$ is an isomorphism.

Note that the same argument shows that $\mathcal X'^P$ is isomorphic to $\widetilde{\mathcal X}'^{P'}$ under the projection. Now for every point $x\in X_{\overline{C}}$ we have 
\begin{equation*}
\begin{split}
i(x,\overline X\cdot\overline X';\overline Y)&=i(x,\overline X^{\mathrm{an}}\cdot\overline X'^{\mathrm{an}};\overline Y^{\mathrm{an}})=i(x,\mathcal X^P\cdot \mathcal X'^P;\mathcal Y^P)\\&=i(\pi^{\mathrm{an}}(x),\widetilde{\mathcal X}^{P'}\cdot \widetilde{\mathcal X}'^{P'};\mathcal U^{P'})=i(\pi(x),\widetilde{ X}\cdot \widetilde{ X}';Y(\Delta))\\
&=i(\pi(x),\overline{\pi(X)}\cdot\overline{\pi(X')};Y(\Delta))
\end{split}
\end{equation*}
by  \cite[Proposition 5.7]{osserman2011lifting}. We next check that $\mathrm{trop}(X)$ and $\mathrm{trop}_Y(\pi(X))$ have the same multiplicities at faces contained in $P$ and $P'$ respectively.

Let $\sigma\subset P$ be a bounded face of $\mathrm{trop}(X)$ such that $\widetilde\pi(\sigma)$ is also a face of $\mathrm{trop}_Y(\pi(X))$. Let $L$ be the sublattice of $W$ of rank $k$ such that $L_\mathbb R$ is parallel to $\sigma$. Let $\pi_W\colon T_W\rightarrow T_L$ be the projection corresponds to $\widetilde\pi_W\colon W_\mathbb R\rightarrow L_\mathbb R$. Let $\pi_N=\pi_W\circ\pi\colon T_N\rightarrow T_L$ and $\widetilde{\pi}_N=\widetilde\pi_W\circ\widetilde\pi\colon N_\mathbb R\rightarrow L_\mathbb R$. For any $\bold w\in\mathrm{relint}(\sigma)$ we have:

$$\begin{tikzcd}  X^\mathrm{an}\cap\mathcal U^\bold w\rar{\varphi}\drar{\pi_N^\mathrm{an}} &{\pi(X)}^{\mathrm{an}}\cap\mathcal U^{\widetilde\pi(\bold w)}\dar{\pi_W^{\mathrm{an}}}\\  &\mathcal U^{\widetilde{\pi}_N(\bold w)}.
\end{tikzcd}$$

Since $\varphi$ is an isomorphism, we have $\deg(\pi_N^\mathrm{an})=\deg(\pi_W^\mathrm{an})$(when restricted to the above diagram). By \cite[Corollary 4.32]{baker2011nonarchimedean} we have $$m_{\mathrm{trop}(X)}(\sigma)=m_{\mathrm{trop}_Y(\pi(X))}(\widetilde\pi(\sigma)).$$
Now we can replace $Y$ with $T_W$ and replace $X$ and $X'$ with $\pi(X)$ and $\pi(X')$, then the theorem follows from \cite[Theorem 6.4]{osserman2011lifting}.

For the general case we take $N_+=N\oplus \mathbb Z\oplus \mathbb Z$, hence $T_{N_+}=T_N\times G_\mathrm{m}^2$. Take also $Y_+=Y\times G_\mathrm{m}^2$ and $X_+=X\times G_\mathrm{m}\times \{1\}$ and 
$X'_+=X'\times \{1\}\times G_\mathrm{m}$. Then $\mathrm{trop}(Y_+)=\mathrm{trop}(Y)\times \mathbb R^2$ and 
$\mathrm{trop}(X_+)=\mathrm{trop}(X)\times \mathbb R\times\{0\}$ and 
$\mathrm{trop}(X'_+)=\mathrm{trop}(X')\times\{0\}\times \mathbb R$ as tropical cycles,
and $\mathrm{trop}(X_+)\cap\mathrm{trop}(X'_+)=\mathrm{trop}(X)\cap\mathrm{trop}(X')$. Let $X(\Delta)_+$ be the toric variety associated to $\Delta$ as a fan in $(N_+)_\mathbb R$. We also have $\overline Y_+=\overline Y\times G_{\mathrm m}^2$ and 
$\overline X_+=\overline X\times G_{\mathrm m}\times\{1\}$ and $\overline X'_+=\overline X'\times\{1\}\times G_{\mathrm m}$ and $\overline X_+\cap\overline X'_+=\overline X\cap\overline X'$. It follows from the projection formula that for all isolated points $x\in\overline X_+\cap\overline X_+'$ we have: 

$$i(x,\overline X_+\cdot \overline X'_+;\overline Y_+)=i(x,\overline X_+\cdot (\overline X'\times \{1\});\overline Y\times G_{\mathrm m})=i(x,\overline X\cdot \overline X';\overline Y).$$
Also for $u\in C$ we have 
\begin{equation*}
\begin{split}
i(u,\mathrm{trop} (X_+)\cdot \mathrm{trop}(X'_+);\mathrm{trop}(Y_+))&=i(u,\mathrm{trop} (X_+)\cdot (\mathrm{trop}(X')\times\{0\});\mathrm{trop}(Y)\times\mathbb R)\\&=i(u,\mathrm{trop}(X)\cdot\mathrm{trop}(X');\mathrm{trop}(Y)).
\end{split}
\end{equation*}
Note that $\dim C\leq\dim(X)=\dim(X_+)-1$ and $\dim C\leq\dim(X')=\dim(X'_+)-1$, hence we reduced to the  case above and the theorem is proved.
\end{proof}

\bibliographystyle{amsalpha}\bibliography{1}
\end{document}